\theoremstyle{proclaim}
\newtheorem{theorem}{Theorem}[section]
\newtheorem{corollary}[theorem]{Corollary}
\newtheorem{lemma}[theorem]{Lemma}
\newtheorem{proposition}[theorem]{Proposition}
\theoremstyle{statement}
\newtheorem{definition}[theorem]{Definition}
\newtheorem{example}[theorem]{Example}
\newtheorem{remark}[theorem]{Remark}
\newtheorem{question}[theorem]{Question}
\numberwithin{equation}{section}
\providecommand{\AMS}{$\mathcal{A}$\kern-.1667em%
\lower.25em\hbox{$\mathcal{M}$}\kern-.125em$\mathcal{S}$}
\begin{document}


\title[Irreducible representations, Lebesgue decomposition]{Finite dimensional irreducible representations and the uniqueness of the Lebesgue decomposition of positive functionals}


\author[Szűcs, Takács]{Zsolt Sz\H{u}cs, Balázs Takács}
\address{Department of Differential Equations, Institute of Mathematics, Budapest University of Technology and Economics, M\H{u}egyetem rkp. 3., H-1111 Budapest, Hungary}

\email{szucszs@math.bme.hu}

\address{Department of Analysis, Institute of Mathematics, Budapest University of Technology and Economics, M\H{u}egyetem rkp. 3., H-1111 Budapest, Hungary;  Department of Applied Quantitative Methods,  Budapest Business School, Buzogány utca 10-12., H-1149 Budapest, Hungary}

\email{takacsb@math.bme.hu}


\begin{abstract}
For an arbitrary complex $^*$-algebra $A$, we prove that every topologically irreducible $^*$-representation of $A$ on a Hilbert space is finite dimensional precisely when the Lebesgue decomposition of representable positive functionals over $A$ is unique.
In particular, the uniqueness of the Lebesgue decomposition of positive functionals over the $L^1$-algebras of locally compact groups provides a new characterization of Moore groups.
\end{abstract}

\begin{subjclass}
Primary 46K10, 46L51; Secondary 43A20, 46L30, 22D20
\end{subjclass}

\begin{keywords}
Irreducible representation, Lebesgue decomposition, positive func\-ti\-o\-nal, enveloping von Neumann algebra, Moore group, locally $C^*$-algebra.
\end{keywords}

\maketitle

\emph{\textbf{To appear in Journal of Operator Theory.}}

\section*{INTRODUCTION}

The classical measure-theoretic Lebesgue decomposition is a fundamental theorem for every mathematician. 
It states that, if $\mu$ and $\nu$ are finite measures on the measurable space $(\Omega, \mathscr{F})$, then there exist measures $\mu_r$ and $\mu_s$ on $(\Omega, \mathscr{F})$ such that $\mu_r$ is absolutely continuous with respect to $\nu$, $\mu_s$ and $\nu$ are singular to each other and $\mu$ is decomposed as the sum $\mu=\mu_r+\mu_s$. A simple but very important property holds for this decomposition: it is unique in the manner that if $\mu_r'$, $\mu_s'$ are measures on $(\Omega, \mathscr{F})$ such that
$\mu_r'$ is absolutely continuous with respect to $\nu$, $\mu_s'$ and $\nu$ are singular, $\mu=\mu_r'+\mu_s'$, then $\mu_r'=\mu_r$ and $\mu_s'=\mu_s$.

J. von Neumann's proof of the Radon-Nikodym theorem involving Hilbert space techniques (\cite{vN}, Lemma 3.2.3 and Theorem VII) inspired B. Simon (\cite{Si}) to investigate the closability of non-negative quadratic forms on Hilbert spaces.
More generally (see Appendix B in \cite{GK}), dropping the assumption of the completeness, 
one can say that a (non-negative quadratic) form $\mathbf{f}$ on the complex vector space $A$ is \emph{closable} with respect to another form $\mathbf{g}$ on $A$, if the property
\[
\left(\mathbf{f}(a_n-a_m, a_n-a_m)\to0\ \wedge\ \mathbf{g}(a_n, a_n)\to0\right)\ \Rightarrow\ \mathbf{f}(a_n, a_n)\to0
\]
is true for any sequence $(a_n)_{n\in\mathbb{N}}$ in $A$.
In their remarkable paper (\cite{GK}, Theorem 7.7), A. Gheondea and A. Ş. Kavruk proved that $\mathbf{f}$ is closable with respect to $\mathbf{g}$ if and only if there exist a sequence $(\mathbf{f}_n)_{n\in\mathbb{N}}$ of forms on $A$ and a sequence $(\alpha_n)_{n\in\mathbb{N}}$ of positive numbers such that
\[
\mathbf{f}_n\leq \mathbf{f}_{n+1},\ \mathbf{f}_n\leq\alpha_n\mathbf{g}\ (\forall n\in\mathbb{N}),\ \mathbf{f}(a,b)=\lim_{n\to+\infty}\mathbf{f}_n(a,b)\ (\forall a,b\in A).
\]
If the latter condition is fulfilled, then we say that $\mathbf{f}$ is \emph{absolutely continuous with respect to $\mathbf{g}$}.

The terms above are supported by the following observation. 
Let $\mu$ and $\nu$ be finite measures on the measurable space $(\Omega, \mathscr{F})$ and let 
$A$ be the linear span of the characteristic functions of the sets in $\mathscr{F}$ over the complex numbers. 
Then the formulas 
\[
\mathbf{f}(a,b):=\int_{\Omega} a\overline{b}\ \mathrm{d}\mu;\ \mathbf{g}(a,b):=\int_{\Omega} a\overline{b}\ \mathrm{d}\nu \ \ \ \ (a.b\in A)
\]
define forms on the vector space $A$, moreover $\mathbf{f}$ is absolutely continuous with respect to $\mathbf{g}$ iff $\mu$ is absolutely continuous with respect to $\nu$. 
A similar observation is true, if we define singularity for forms as follows: $\mathbf{f}$ and $\mathbf{g}$ are \emph{singular}, if for every form $\mathbf{p}'$ the inequalities $\mathbf{p}'\leq\mathbf{f}$ and $\mathbf{p}'\leq\mathbf{g}$ imply that $\mathbf{p}'=0$.

For a further important generalization, notice that the set $A$ of simple functions is not just a vector space. 
It is a complex $^*$-algebra with the pointwise operations, furthermore the formulas
\begin{equation}\label{megf}
f(a):=\int_{\Omega}a\ \mathrm{d}\mu;\ g(a):=\int_{\Omega}a\ \mathrm{d}\nu \ \ \ \ (a\in A)
\end{equation}
define positive linear functionals on $A$.
If we introduce the notions of absolute continuity and singularity for these objects by the form-related definitions with $\mathbf{f}(a,b)=f(b^*a)$, then one can show that 
$f$ is absolutely continuous with respect to $g$ exactly when $\mu$ is absolutely continuous with respect to $\nu$. 
Moreover, $f$ and $g$ are singular precisely when $\mu$ and $\nu$ are singular (e.g., \cite{SzLebalg}, Lemma 4.1; see also Example 1 in \cite{G}). 
Since the definitions do not involve the commutativity, these concepts can be investigated over arbitrary $^*$-algebras. 
This leads to the Lebesgue decomposition theory of positive functionals defined on $^*$-algebras (see subsection \ref{felbont} below).

In the past decades, several non-commutative generalizations of the Le\-bes\-gue-Radon-Nikodym theory have been appeared in the mathematical literature.
Without being exhaustive, we mention here a few directions and papers (for further reference, see the introductions and the bibliographies of \cite{GK} and \cite{HSdS}).
In the case of 
\begin{itemize}
\item non-negative quadratic forms on complex vector spaces: B. Simon \cite{Si}; A. Gheondea, A. Ş. Kavruk \cite{GK}; S. Hassi, Z. Sebestyén and H. de Snoo \cite{HSdS}.

\item positive operators on Hilbert spaces: T. Ando \cite{Ando}; A. Gheondea, A. Ş. Kavruk \cite{GK}.

\item operator valued completely positive maps on $C^*$-algebras: W. B. Arveson \cite{Arveson}; K. R. Parthasarathy \cite{Par}; A. Gheondea, A. Ş. Kavruk \cite{GK}.

\item positive functionals on $^*$-algebras: S. Sakai \cite{Sa} (1.24); S. P. Gudder \cite{G}; H. Kosaki \cite{Ko}; Zs. Szűcs \cite{SzLebalg}.
\end{itemize}

Using Ando's famous Lebesgue theory for positive operators (\cite{Ando}; \cite{GK}, Appendix A), Gheondea and Kavruk (\cite{GK}, Theorem 7.8) proved the following noteworthy form related decomposition.
Every form $\mathbf{f}$ on a complex vector space $A$ can be decomposed as a sum 
\begin{equation}\label{formafelb}
\mathbf{f}=\mathbf{f}_r+\mathbf{f}_s
\end{equation}
with respect to another form $\mathbf{g}$ on $A$, where $\mathbf{f}_r$ is absolutely continuous with respect to $\mathbf{g}$, while $\mathbf{f}_s$ and $\mathbf{g}$ are singular.
The form $\mathbf{f}_r$ is the greatest among all of the forms $\mathbf{f}_0$ on $A$ such that $\mathbf{f}_0\leq \mathbf{f}$ and $\mathbf{f}_0$ is absolutely continuous with respect to $\mathbf{g}$.
This result implies the form decomposition of Simon (\cite{Si}, Theorem 2.5).
(Note here that, in \cite{HSdS}, Theorems 2.11 and 3.9 Hassi, Sebestyén and de Snoo obtained the decomposition \eqref{formafelb} without Ando's results, but \eqref{formafelb} appeared first in \cite{GK}.)

By Ando's results, another decomposition was obtained in \cite{GK}, Theorem 3.1, namely, Parthasarathy's decomposition  for completely positive maps (\cite{Par}, page 44). 
But, similar to the case of positive operators and forms, the maximality of the absolutely continuous part was also proved in this theorem.
Moreover, Appendix C  in \cite{GK} shows that Ando's theory can derived from the form and the completely positive map case.
Thus, \cite{GK} makes a remarkable connection between the different settings of the non-commutative Lebesgue-Radon-Nikodym theory.  

For positive functionals, Gudder (\cite{G}, Corollary 3) proved a Lebesgue decomposition theorem on unital Banach $^*$-algebras. 
In \cite{Ko}, Theorem 3.5, H. Kosaki gave a similar decomposition for normal states on $\sigma$-finite von Neumann algebras. As a common generalization, using the form decomposition \eqref{formafelb}, the first author of the present paper introduced a Lebesgue decomposition for representable positive functionals on arbitrary $^*$-algebras (\cite{SzLebalg}, Corollary 3.2; see Theorem \ref{Lebesgue} and Remark \ref{why} below).
Note that the $C^*$-algebra version is a special case of the decomposition related to completely positive maps.

Turning to the subject of the present paper, we examine the question of the uniqueness.
In contrast to the measure-theoretic decomposition above, the decompositions in the non-commutative settings are not unique in general.
For instance, in the form case this means that there exist forms $\mathbf{f}$, $\mathbf{g}$ on the vector space $A$ such that $\mathbf{f}=\mathbf{f}_r'+\mathbf{f}_s'$, where the form $\mathbf{f}_r'$ is absolutely continuous with respect to $\mathbf{g}$, the forms $\mathbf{f}_s'$ and $\mathbf{g}$ are singular, moreover $\mathbf{f}_r'\neq\mathbf{f}_r$ and $\mathbf{f}_s'\neq\mathbf{f}_s$.
For positive operators, Ando (\cite{Ando}, Theorem 6; \cite{GK}, Theorem 6.5) showed that the Lebesgue decomposition of a positive operator $S_1$ with respect to $S_2$ is unique iff $S_2$ uniformly dominates the absolutely continuous part of $S_1$. 
By Corollary 7 in \cite{Ando} (see also Corollary 6.6 in \cite{GK}), one can easily find operators for which the decomposition is not unique. 

Using Ando's attractive characterization statement on the uniqueness, Ghe\-ondea and Kavruk (\cite{GK}, Theorem 7.12 (1)) proved that a similar result holds in the case of the form decomposition.
Namely, the decomposition $\mathbf{f}=\mathbf{f}_r+\mathbf{f}_s$ in \eqref{formafelb} is unique if and only if there is an $\alpha\geq0$ such that
\[ 
\mathbf{f}_r(a,a)\leq\alpha\mathbf{g}(a,a)
\]
is true for any $a\in A$. 
This theorem implies that the decomposition of Simon is not unique in general as well. 
Moreover, Corollary 7.12 (2) in \cite{GK} also gives equivalent properties for the uniqueness of the decomposition with respect to $\mathbf{g}$. 

The case of completely positive maps is complicated.
Similar to the form case (Corollary 7.12 (2) in \cite{GK}), Proposition 3.8 in \cite{GK} presents conditions on the uniqueness, but they are not necessary properties.
Example 3.9 in \cite{GK} and the discussion before it explains the reasons (see also Example 5.14 in \cite{SzAbs}).

For positive functionals, a major unanswered question remained:
Which $^*$-algebras have the property that the Lebesgue decomposition of representable positive functionals over the $^*$-algebra is unique? Can one obtain a characterizing assertion? Until now, the solution of this problem had only partial results, for example, uniqueness over commutative $^*$-algebras, non-uniqueness over properly infinite von Neumann algebras (see the discussion after Definition \ref{defunique}).
The main subject of the paper is to settle this uniqueness problem in full generality. 
As a solution, we give a concrete characterizing criterion for the uniqueness via the representation theory of the underlying $^*$-algebra.
This is our main result, Theorem \ref{main}.


\section{PRELIMINARIES}\label{prelims}

To make our aim more clear, we introduce the fundamental definitions and theorems related to the Lebesgue decomposition theory in the context of positive functionals defined on $^*$-algebras. 
In the first place, we have to recall some indispensable concepts and results of the representation theory of $^*$-algebras on Hilbert spaces.

\subsection{Some generalities} The following well-known facts can be found in the textbooks which are listed in the bibliography (\cite{Bla2}, \cite{Dixmier}, \cite{DvN}, \cite{KR2}, \cite{Palmer2}, \cite{Pedersen}, \cite{Sa}, \cite{Ta1}). 
We use them further without any reference.

Let us fix a $^*$-algebra $A$, that is, an algebra over the complex numbers $\mathbb{C}$, endowed with an involution $^*:A\to A$.
By a \emph{($^*$-)representation} of $A$ we always mean a $^*$-homomorphism $\pi:A\to\mathscr{B}(\mathcal{H})$, where $\mathcal{H}$ is a complex Hilbert space with inner product $(\cdot\!\mid\!\cdot)$ 
and $\mathscr{B}(\mathcal{H})$ is the $^*$-algebra of bounded linear operators on $\mathcal{H}$. 
A linear subspace $\mathcal{H}_0$ of $\mathcal{H}$ is \emph{$\pi$-invariant}, if
\[
\pi\left<A\right>\mathcal{H}_0:=\{\pi(a)\xi\!\mid\!a\in A,\ \xi\in\mathcal{H}_0\}\subseteq\mathcal{H}_0.
\]

We say that a non-zero representation $\pi$ of $A$ on the Hilbert space $\mathcal{H}$ is 
\begin{itemize}
	\item \emph{non-degenerate} or \emph{essential}, if the span of 
	\[
	\pi\left<A\right>\mathcal{H}:=\{\pi(a)\xi\!\mid\!a\in A,\ \xi\in\mathcal{H}\}
	\]
	is dense in $\mathcal{H}$.
	
	\item \emph{cyclic}, if there exists a $\xi\in\mathcal{H}$ such that the subspace
	\[
	\pi\left<A\right>\xi:=\{\pi(a)\xi\!\mid\!a\in A\}
	\]
	is dense in $\mathcal{H}$. We call such a vector \emph{$\pi$-cyclic vector}.
	
	\item \emph{faithful}, if the mapping $\pi$ is injective.
	
	\item \emph{topologically irreducible}, if the closed $\pi$-invariant subspaces are only $\{0\}$ and $\mathcal{H}$. 
	This is equivalent to the property that every $\xi\in\mathcal{H}\setminus\{0\}$ is a $\pi$-cyclic vector.
	
	\item \emph{algebraically irreducible}, if the $\pi$-invariant subspaces are only $\{0\}$ and $\mathcal{H}$. 
	This is equivalent to the property that $\pi\left<A\right>\xi=\mathcal{H}$ holds for every $\xi\in\mathcal{H}\setminus\{0\}$.
\end{itemize}

\begin{remark}\label{trivial}
According to Definition 9.2.1 in \cite{Palmer2} and Definitions 4.1.1, 4.2.1 in \cite{Palmer1}, our convention is that the terms \emph{topologically/algebraically irreducible representations} always refer to non-zero representations. 
\end{remark}

If $\mathscr{S}\subseteq\mathscr{B}(\mathcal{H})$ is an arbitrary set of operators, then the \emph{commutant} of $\mathscr{S}$ is
\[
\mathscr{S}':=\{U\in\mathscr{B}(\mathcal{H})\!\mid\!\forall S\in\mathscr{S}:\ US=SU \}. 
\]

A linear functional $f:A\to\mathbb{C}$  is \emph{positive}, if $f(a^*a)\geq0$ for any $a\in A$ (in notation: $f\geq0$). 
For positive functionals $f$ and $g$ defined on $A$, we write $f\leq g$, if the functional $g-f$ is positive. 
It is obvious that the sum of positive functionals and non-negative multiples of positive functionals are positive.
The notation $L_f$ stands for the \emph{left kernel} of $f$, i.e., the left ideal
\[
L_f:=\{a\in A\!\mid\!f(a^*a)=0\}.
\]
We call $f$ \emph{faithful}, if $L_f=\{0\}$. 

A positive functional $f$ is said to be \emph{representable}, if there exist a Hilbert space $\mathcal{H}$, a cyclic representation $\pi:A\to\mathscr{B}(\mathcal{H})$ and a $\pi$-cyclic vector $\xi\in\mathcal{H}$ such that
\[
f(a)=(\pi(a)\xi\!\mid\!\xi)\ \ \ \ (\forall a\in A).
\] 
We note that the sum and non-negative multiples of representable positive functionals are representable.
Moreover, if $(f_n)_{n\in\mathbb{N}}$ is an increasing sequence of representable positive functionals on $A$ (that is, $f_n\leq f_{n+1}$ for all $n\in\mathbb{N}$) which is bounded by a representable positive functional on $A$, then $(f_n)_{n\in\mathbb{N}}$ converges pointwise to a representable positive functional (\cite{SzAbs}, Remark 2.14). The notation $\sup_{n\in\mathbb{N}}f_n$ stands for this functional. 

\begin{remark}\label{trivial2}
We record here that there exists a topologically irreducible (hence non-zero) representation of $A$ if and only if there is a non-zero representable positive functional on $A$. 
This follows from Theorems 9.6.4 and 9.6.6 (b) in \cite{Palmer2} (see Definitions 9.4.21 and 9.6.3 in \cite{Palmer2}).   
\end{remark}

Let $M$ be a von Neumann algebra, that is, a $C^*$-algebra of operators on a Hilbert space $\mathcal{H}$ which is closed in the strong operator topology and contains the identity. 
Equivalently, $M=M''$ in $\mathscr{B}(\mathcal{H})$.
A positive functional $\mathfrak{f}:M\to\mathbb{C}$ is said to be \emph{normal}, if whenever $(x_i)_{i\in I}$ is a norm-bounded increasing net in the set of positive elements of $M$ with $x=\sup_{i\in I}x_i$, then $\mathfrak{f}(x)=\sup_{i\in I}\mathfrak{f}(x_i)$. 
It is obvious that the sum and non-negative multiples of normal positive functionals are normal. Moreover, if $\mathfrak{f},\mathfrak{g}:M\to\mathbb{C}$ are positive functionals and $\mathfrak{g}$ is normal, then $\mathfrak{f}\leq \mathfrak{g}$ implies the normality of $\mathfrak{f}$.
The von Neumann algebra $M$ is \emph{$\sigma$-finite}, if there exists a faithful normal positive functional on $M$.

\subsection{Lebesgue decomposition of positive functionals}\label{felbont} 
This part contains the precise concepts of the theory, including the decomposition theorem for representable positive functionals on $^*$-algebras (Theorem \ref{Lebesgue}). 
We discuss the subject of the paper below, namely, the problem of the u\-ni\-que\-ness of the Lebesgue decomposition, and we present the solution of this problem as the main result of this article (Theorem \ref{main}).

The notion of absolute continuity has many equivalent formulations, e.g., in the case of
\begin{itemize}
\item forms: \cite{Si}; \cite{GK}, Appendix B; \cite{HSdS}, 2.5 and Theorem 3.8.
\item positive operators: \cite{Ando}, Introduction, Lemma 1 and Section 3; \cite{GK}, Appendix A.
\item completely positive maps: \cite{Par}, page 48; \cite{GK}, II and Corollary 3.5.
\item positive functionals: Theorem 1 and Corollary 2 in \cite{G}; Theorem 2.2 in \cite{Ko}; Theorem 2.15 in \cite{SzAbs}.
\end{itemize}
We only need the following version.

\begin{definition}\label{absdef}
	Let $A$ be a $^*$-algebra and let $f,g:A\to\mathbb{C}$ be representable positive functionals on $A$. 
We say that $f$ is \emph{absolutely continuous with respect to $g$}, if there exist a sequence $(f_n)_{n\in\mathbb{N}}$ of representable positive functionals on $A$ and a sequence $(\alpha_n)_{n\in\mathbb{N}}$ of positive numbers such that
		\[
		f_n\leq f_{n+1},\ f_n\leq\alpha_ng\ (\forall n\in\mathbb{N}),\ f=\sup_{n\in\mathbb{N}}f_n.
		\]
In notation, we use $f\ll g$.
\end{definition}

The concept of singularity also has various equivalent formulations, e.g., in the case of
\begin{itemize}
\item forms: \cite{GK}, page 24 and Corollary 7.11; \cite{HSdS}, 2.5 and Corollary 3.11.
\item positive operators: \cite{Ando}, Introduction and Corollary 3; \cite{GK}, Appendix A.
\item completely positive maps: \cite{Arveson}, Corollary 1.4.4.; \cite{GK}, Corollary 3.6.
\item positive functionals: page 146 in \cite{G}; Theorem 8.1 in \cite{Ko}; Theorem 3 in \cite{SzSing}.
\end{itemize}
The following is the suitable version for us.

\begin{definition}\label{singdef}
	Let $A$ be a $^*$-algebra. 
	We say that the representable positive functionals $f,g:A\to\mathbb{C}$ are \emph{singular} (to each other), if for every representable positive functional $p'$ on $A$ the conditions
	\[
	p'\leq f,\ p'\leq g 
	\]
	imply that $p'=0$. In notation, we use $f\perp g$ for this property.
\end{definition}

\begin{remark}\label{altul}
	We record here some easy consequences of the definitions of absolute continuity and singularity. 
	Let $A$ be a $^*$-algebra, $f$ and $g$ are representable positive functionals on $A$. 
	\begin{itemize}
		\item Let $B$ be a $^*$-subalgebra of $A$. 
		If $f$ is absolutely continuous with respect to $g$, then the same is true for the restrictions $f \!\mid\!_B$ and $g\!\mid\!_B$. (These functionals are representable as well.)
		
		\item The implications
		\begin{equation}\label{WDtul}
		f\leq g\ \Rightarrow\ f\ll g\ \Rightarrow\ L_g\subseteq L_f
		\end{equation}
		are true in general.
		
		\item If $f$ and $g$ are singular and $f$ is absolutely continuous with respect to $g$ at the same time, then the functional $f$ is zero. Indeed, by singularity, every $f_n$ is zero in Definition \ref{absdef}. 
	\end{itemize}
\end{remark}

The following general Lebesgue decomposition theorem for representable positive functionals on $^*$-algebras can be found in the first author's papers (\cite{SzLebalg}, Corollary 3.2, or Corollary 2.17 and Proposition 1.13  in \cite{SzAbs}).
We also note that the original measure-theoretic Lebesgue decomposition is an easy consequence by the associations in \eqref{megf}  
(e.g., \cite{SzLebalg}, Theorem 4.2).

\begin{theorem}\label{Lebesgue}
	Let $A$ be a $^*$-algebra and let $f,g$ be representable positive functionals on $A$. Then there is a representable positive functional $f_r:A\to\mathbb{C}$ such that $f_r$ is the greatest among all of the representable positive functionals $f_0$ such that $f_0\leq f$ and $f_0$ is absolutely continuous with respect to $g$. The positive functional $f_s:=f-f_r$ is representable, moreover $f_s$ and $g$ are singular functionals, as well as $f_s$ and $f_r$.
	In notation:
	\[
	f=f_r+f_s;\ f_r\ll g,\ f_s\perp g,\ f_s\perp f_r.
	\]
	The sum $f=f_r+f_s$ is \textbf{the Lebesgue decomposition of $f$ with respect to $g$}, where $f_r$ is called the \textbf{absolutely continuous} or \textbf{regular part} of $f$ with respect to $g$, and $f_s$ is the \textbf{singular part} of $f$. 
\end{theorem}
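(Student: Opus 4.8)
The plan is to push the problem into the GNS representation of the dominating functional and then invoke Ando's Lebesgue decomposition for bounded positive operators. Write $h:=f+g$, which is representable because $f$ and $g$ are, and form its GNS data $(\pi_h,\mathcal{H}_h,\xi_h)$: here $\mathcal{H}_h$ is the completion of $A/L_h$ under $(a+L_h\mid b+L_h):=h(b^*a)$, the representation $\pi_h$ acts by left multiplication, and $a\mapsto\pi_h(a)\xi_h$ identifies $A/L_h$ isometrically with a dense subspace. Since $0\le f\le h$ and $0\le g\le h$ we have $L_h\subseteq L_f\cap L_g$, so $(a+L_h,b+L_h)\mapsto f(b^*a)$ and $(a+L_h,b+L_h)\mapsto g(b^*a)$ are well defined, bounded by $1$, and extend to operators $T,S\in\mathscr{B}(\mathcal{H}_h)$ with $0\le T,S$, $T+S=I$, and $f(b^*a)=(T(a+L_h)\mid b+L_h)$, $g(b^*a)=(S(a+L_h)\mid b+L_h)$. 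The identity $f(b^*ca)=f((c^*b)^*a)$ (and its analogue for $g$) shows that $T,S\in\pi_h\langle A\rangle'$.

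Next I would apply Ando's Lebesgue decomposition (\cite{Ando}; \cite{GK}, Appendix~A) to $T$ with respect to $S=I-T$, obtaining $T=T_r+T_s$ where $0\le T_r\le T$, the operator $T_r$ is the greatest operator below $T$ that is absolutely continuous with respect to $S$, and $T_s$ is singular with respect to both $S$ and $T_r$. Because Ando's parts are built from $T$ and $S$ by functional calculus and strong limits, they lie in the von Neumann algebra generated by $\{T,I-T\}$, hence again in $\pi_h\langle A\rangle'$. Put $f_r(a):=(T_r\pi_h(a)\xi_h\mid\xi_h)$ and $f_s:=f-f_r$, so $f_s(a)=(T_s\pi_h(a)\xi_h\mid\xi_h)$. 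Using $T_r=(T_r^{1/2})^2$ with $T_r^{1/2}\in\pi_h\langle A\rangle'$ gives $f_r(a)=(\pi_h(a)\eta\mid\eta)$ with $\eta:=T_r^{1/2}\xi_h$; compressing $\pi_h$ to $\overline{\pi_h\langle A\rangle\eta}$ exhibits $f_r$ — and likewise $f_s$, and indeed any functional dominated by $h$ — as representable. Positivity and $f_r\le f$, $f_s\le f$ are then read off from $0\le T_r,T_s\le T$ evaluated at the vectors $\pi_h(a)\xi_h$.

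It remains to convert the order- and structure-properties of the operator decomposition into their functional counterparts. For maximality: a representable $f_0$ with $f_0\le f$ and $f_0\ll g$ is again $\le h$, hence is represented by some $0\le T_{f_0}\le T$ in $\pi_h\langle A\rangle'$; translating an increasing sequence witnessing $f_0\ll g$ into an increasing, strongly convergent sequence of operators bounded by multiples of $S$ shows $T_{f_0}\ll S$, so $T_{f_0}\le T_r$ by Ando's maximality and therefore $f_0\le f_r$. For $f_r\ll g$, the same dictionary turns the increasing sequence witnessing $T_r\ll S$ into one witnessing $f_r\ll g$. For $f_s\perp g$ and $f_s\perp f_r$: any common representable minorant $p'$ of $f_s$ and $g$ (resp.\ of $f_s$ and $f_r$) satisfies $L_h\subseteq L_{p'}$, hence is represented by an operator $0\le T_{p'}\le T_s$ with $T_{p'}\le S$ (resp.\ $T_{p'}\le T_r$), which Ando's singularity forces to be $0$, so $p'=0$.

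The step I expect to be the main obstacle is precisely this last translation: one must establish that $p\mapsto$ ``the operator implementing $p$ in $\pi_h\langle A\rangle'$'' is an order isomorphism from the representable positive functionals dominated by $h$ onto the positive operators in $\pi_h\langle A\rangle'$ dominated by $I$, and that it carries $\ll$ and $\perp$ faithfully in both directions — in particular that the inequality obtained for $p'$ is an operator inequality on all of $\mathcal{H}_h$ and not merely on the dense subspace, and that the suprema in Definition \ref{absdef} correspond to strong operator suprema. One may instead bypass Ando's operator theorem and feed the forms $(a,b)\mapsto f(b^*a)$ and $(a,b)\mapsto g(b^*a)$ directly into the form decomposition \eqref{formafelb}; the obstacle then moves to showing that the maximal form $\mathbf{f}_r$ returned there still factors through $b^*a$ and respects the involution, i.e.\ arises from a representable functional, which once more is extracted from its maximality.
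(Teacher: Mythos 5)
Your argument is correct and is essentially the route taken by the sources the paper quotes for this theorem: the paper does not reprove it but cites \cite{SzLebalg} and \cite{SzAbs}, whose proofs likewise reduce, via the GNS construction for $f+g$ (equivalently the form decomposition \eqref{formafelb} of Gheondea--Kavruk), to Ando's Lebesgue decomposition of the induced positive operators in the commutant. The ``main obstacle'' you single out --- that $p\mapsto T_p$ is an order isomorphism between representable positive functionals dominated by $h$ and positive contractions in $\pi_h\left<A\right>'$, carrying pointwise suprema to strong suprema --- is exactly the known correspondence theorem (e.g.\ Theorem 9.4.24 in \cite{Palmer2}, or Proposition 1.13 in \cite{SzAbs}) on which those proofs rest, so your outline fills in correctly.
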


\begin{remark}\label{why}
In \cite{SzLebalg}, Corollary 3.2, the statements of the previous theorem were immediate consequences of a result on representable forms over non-involutive complex algebras (\cite{SzLebalg}, Theorem 2.6). 
This was obtained directly from the form decomposition \eqref{formafelb}, which is originally due to Gheondea and Kavruk (\cite{GK}, Theorem 7.8). 
For the latter, a different proof can be found in \cite{HSdS} (Theorems 2.11 and 3.9).
However, since every positive functional on a $C^*$-algebra is a completely positive map, in the case of $C^*$-algebras, Theorem \ref{Lebesgue} follows from Theorem 3.1 in \cite{GK}, that is, the Lebesgue decomposition of completely positive maps. 
Moreover, the general version above can be obtained from the $C^*$-algebra version by the factorizing methods what we use in subsection \ref{nemGNS}.
Thus, since Theorem 3.1 in \cite{GK} lie upon Ando's theory (\cite{Ando}; \cite{GK}, Appendix A), Theorem \ref{Lebesgue} is a consequence of the Lebesgue decomposition theory of positive operators.

We must note that the similar decompositions of Gudder (\cite{G}, Corollary 3) and Kosaki (\cite{Ko}, Theorem 3.5) coincide with the decomposition in Theorem \ref{Lebesgue} in the same settings, namely, over unital Banach $^*$-algebras and von Neumann algebras (but in \cite{G} the maximality of the absolutely continuous part has not been pointed out).
\end{remark}

One major question remained after the existence Theorem \ref{Lebesgue}: what can we say about the uniqueness of the decomposition?
It is not an obvious one, as it can be seen from the results below. 
The main subject of this paper is to give a complete answer by the aid of the representation theory of $^*$-algebras.

We formulate the precise meaning of the uniqueness  in the next definition. 

\begin{definition}\label{defunique}
	We say that the Lebesgue decomposition $f=f_r+f_s$ of $f$ with respect to $g$ is \emph{unique}, if for arbitrary representable positive functionals $f_r'$ and $f_s'$ on $A$ the properties
	\[
	f=f_r'+f_s';\ f_r'\ll g,\ f_s'\perp g
	\]
	force that $f_r'=f_r$ and $f_s'=f_s$.
	
	Throughout the paper the phrase "\emph{the Lebesgue decomposition of representable positive functionals over $A$ is unique}" (or shortly, "\emph{the Lebesgue decomposition over $A$ is unique}") means that for every pair of representable positive functionals $f,g:A\to\mathbb{C}$ the Lebesgue decomposition of $f$ with respect to $g$ is unique.
\end{definition}

The general problem on the uniqueness is the following: what kind of assumptions do we have to make on the $^*$-algebra in order to imply the uniqueness of the Lebesgue decomposition of representable positive functionals over $A$?
In the Introduction, we mentioned several results on the uniqueness of the decomposition in the case of positive operators, forms and completely positive maps.
So, let us summarize here what progress have been made on this question over the years in the case of positive functionals. (See also in \cite{SzAbs}, section $5$).

The first (highly non-trivial) non-uniqueness result is due to Kosaki (\cite{Ko}, 10.5 and 10.6).
He gave an example of a properly infinite von Neumann algebra $M$ which has the property that the Lebesgue decomposition is not unique over $M$, even for normal positive functionals. (Note that over finite von Neumann algebras the uniqueness is true for normal positive functionals. We prove this in Corollary \ref{normaluniq}.)
In a recent paper (\cite{Zsig1}, Examples 6.4 and 6.6),  by the aid of Ando's non-uniqueness result on positive operators (\cite{Ando}, Theorem 6), Tarcsay and Titkos showed that the Lebesgue decomposition is not unique over the $^*$-algebra of compact operators on an infinite dimensional Hilbert space $\mathcal{H}$, as well as over the full operator algebra $\mathscr{B}(\mathcal{H})$. 
Despite the beautiful connection with Ando's decomposition for positive operators (\cite{Ando}), which was proved in \cite{Zsig1}, this way seems to be a dead end. The usability of Ando's non-uniqueness theorem lies upon the well-known dualities between the compact operators, the trace-class operators and the full operator algebra (\cite{Sa}, 1.19). 
Hence, in general (for instance, in the case of \emph{NGCR/antiliminal} $C^*$-algebras; \cite{Bla2}, Definition IV.1.3.1), Ando's theorem presumably cannot be applied.

For positive results on the uniqueness, we cite the paper \cite{SzAbs} of the first author. 
In section $5$, it was mentioned that for finite dimensional vector spaces the Lebesgue decomposition of forms is unique (in particular, for representable positive functionals defined on finite dimensional $^*$-algebras). 
It was showed that over commutative $^*$-algebras the Lebesgue decomposition of representable positive functionals is unique. 
The proof uses the Gelfand-Naimark theorem and the Riesz-representation theorem, hence, works only in the commutative case.
In \cite{SzAbs}, the uniqueness was also proved in the case of the Banach $^*$-algebras $L^1(G)$ where $G$ is a compact group. 

The results above show that the uniqueness is a rather complicated problem. Neither the commutativity, nor the finite dimensionality characterize it.   
It was already highlighted in \cite{SzAbs}, page 244 that the topologically irreducible representations of finite dimensional $^*$-algebras, commutative $^*$-algebras and  $L^1$-algebras of compact groups are all finite dimensional. 
The $^*$-algebras in the non-uniqueness examples have infinite dimensional irreducible representations. 
In the light of these observations, our main goal is to prove:

\begin{theorem}\label{main}
	If $A$ is a $^*$-algebra, then the Lebesgue decomposition of representable positive functionals over $A$ is unique if and only if every topologically irreducible representation of $A$ is finite dimensional.
\end{theorem}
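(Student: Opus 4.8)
The plan is to prove the two directions separately, reducing each to a problem about positive functionals on a single ``test'' $^*$-algebra — essentially a (block of a) von Neumann algebra — where Ando-type criteria apply.

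\textbf{Sufficiency (finite-dimensional irreducibles $\Rightarrow$ uniqueness).} Given representable $f,g$ on $A$, I would first pass to the GNS/enveloping picture: the representable positive functionals dominated by a fixed representable functional $h$ correspond bijectively (and order-isomorphically) to normal positive functionals on the von Neumann algebra $\pi_h\langle A\rangle''$ generated by the GNS representation of $h$. Taking $h=f+g$, both $f$ and $g$ become normal positive functionals $\mathfrak f,\mathfrak g$ on $M:=\pi_h\langle A\rangle''$, and absolute continuity, singularity, and the Lebesgue decomposition all transfer faithfully to $M$ (this is exactly the ``factorizing method'' alluded to in Remark \ref{why}). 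So uniqueness over $A$ for this pair follows from uniqueness of the Lebesgue decomposition of $\mathfrak f$ with respect to $\mathfrak g$ among \emph{normal} functionals on $M$. The key structural input is that if every topologically irreducible representation of $A$ is finite dimensional, then $M$ is a \emph{finite} von Neumann algebra (no type $\mathrm{I}_\infty$, $\mathrm{II}_\infty$ or $\mathrm{III}$ part): indeed an infinite or type $\mathrm{III}$ summand would, via the central decomposition and the irreducible subrepresentations weakly contained in $\pi_h$, produce an infinite-dimensional topologically irreducible representation of $A$. Then I would invoke (or reprove, as the paper promises in Corollary \ref{normaluniq}) the fact that over a finite von Neumann algebra the Lebesgue decomposition of normal positive functionals is unique — which in turn rests on Ando's characterization: uniqueness holds iff $\mathfrak g$ uniformly dominates $\mathfrak f_r$, and on a finite algebra the regular part is automatically uniformly dominated because one can use the center-valued trace / absence of properly infinite projections to upgrade the increasing approximants $f_n\le\alpha_n g$ to a single bound.

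\textbf{Necessity (an infinite-dimensional irreducible $\Rightarrow$ non-uniqueness).} Suppose $\pi:A\to\mathscr B(\mathcal H)$ is topologically irreducible with $\dim\mathcal H=\infty$. The strategy is to transplant the known non-uniqueness example on $\mathscr B(\ell^2)$ (Tarcsay--Titkos, \cite{Zsig1}, via Ando \cite{Ando}, Theorem 6) back to $A$. Concretely, I would choose vectors and build two representable functionals of the form $f(a)=(\pi(a)\xi\mid\xi)$-type expressions — more precisely, functionals coming from positive operators $S_1,S_2$ in the commutant picture, or directly: pick a cyclic (hence, by irreducibility, any nonzero) vector and positive operators on $\mathcal H$ to realize, on the $^*$-subalgebra $\pi\langle A\rangle$, a pair of functionals whose Lebesgue decomposition is non-unique on $\mathscr B(\mathcal H)$, then note that since $\pi$ is irreducible, $\pi\langle A\rangle$ is \emph{strongly dense} in $\mathscr B(\mathcal H)$ (indeed algebraically irreducible by Kadison transitivity if $A$ has a $C^*$-completion, but at the $^*$-algebra level density suffices for evaluating normal functionals), so the decomposition pulled back to $A$ is still non-unique: a putative second decomposition $f=f_r'+f_s'$ over $A$ would extend, by normality/continuity arguments on the generated von Neumann algebra $=\mathscr B(\mathcal H)$, to a second decomposition on $\mathscr B(\mathcal H)$, contradiction. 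The delicate point is ensuring the functionals one writes down on $A$ are genuinely representable and that the absolute-continuity/singularity relations on $A$ match those on $\mathscr B(\mathcal H)$; this again goes through the enveloping von Neumann algebra of $f+g$, which for a suitable choice is (a corner of) $\mathscr B(\mathcal H)$.

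\textbf{Main obstacle.} I expect the hard part to be the necessity direction — specifically, manufacturing, on an abstract $^*$-algebra $A$ possessing only the datum of one infinite-dimensional irreducible $\pi$, a concrete pair of \emph{representable} positive functionals realizing non-uniqueness. The Tarcsay--Titkos examples live on $\mathscr B(\ell^2)$ or $\mathscr K(\ell^2)$, which have a rich supply of normal states; an arbitrary $A$ need not, so one must carefully use that a single irreducible representation already forces $\pi\langle A\rangle$ to be a strongly dense $^*$-subalgebra of $\mathscr B(\mathcal H)$ and that representable functionals on $A$ restrict from / induce normal functionals on $\mathscr B(\mathcal H)$ — controlling both the existence of enough such functionals and the preservation of $\ll$ and $\perp$ under this correspondence. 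On the sufficiency side the analogous but milder difficulty is the reduction to a \emph{finite} von Neumann algebra and the proof that finiteness yields the uniform domination Ando's criterion demands; this is where the central-valued trace and a careful analysis of the type decomposition of the enveloping algebra of $f+g$ will be needed.
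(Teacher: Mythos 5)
Your overall skeleton (reduce both directions to a von Neumann algebra attached to the data, use finiteness of that algebra for sufficiency and an infinite-dimensional irreducible representation to manufacture a counterexample for necessity) is the same as the paper's, but two steps as written are respectively wrong and missing.

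In the sufficiency direction, the justification you give for uniqueness over a finite von Neumann algebra does not work. Ando's criterion ``unique iff $\mathfrak{g}$ uniformly dominates $\mathfrak{f}_r$'' characterizes uniqueness of the operator/form decomposition, where the competing decompositions range over arbitrary forms; for representable (here: normal) positive functionals the competing decompositions are constrained to that smaller class, the correct criterion is Lemma \ref{uniqlem1}, and uniform domination is not necessary. Worse, your claim that on a finite algebra the regular part is automatically uniformly dominated is false: on a type $II_1$ factor with trace $\tau$, the functional $\tau(h\,\cdot)$ for an unbounded positive $h$ affiliated with the factor with $\tau(h)<+\infty$ is normal and absolutely continuous with respect to $\tau$ (hence equals its own regular part), yet satisfies no bound $\leq\alpha\tau$. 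The input that actually makes the argument go is Kosaki's Theorem \ref{Kosaki} ($M$ finite iff every normal functional is absolutely continuous with respect to a faithful normal one), combined with the reduction to the corner $s(\mathfrak{g})Ms(\mathfrak{g})$ and the hereditary-subalgebra extension result (Corollary \ref{heredit}); this yields $L_{\mathfrak{g}}\subseteq L_{\mathfrak{f}}\Rightarrow\mathfrak{f}\ll\mathfrak{g}$ (Lemma \ref{finitelemma}), which is exactly what Lemma \ref{uniqlem1} (ii) requires. Note also that your one-line claim that finite-dimensionality of all irreducibles forces the relevant von Neumann algebra to be finite is essentially Hamana's Theorem \ref{Hamana}, a nontrivial lemma you would still have to prove or cite.

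In the necessity direction you have correctly located, but not filled, the gap: one must actually exhibit representable positive functionals on $A$ witnessing non-uniqueness, and importing the Tarcsay--Titkos/Ando example from $\mathscr{B}(\mathcal{H})$ requires verifying that $\leq$, $\ll$ and (hardest) $\perp$ transfer back and forth between $A$, the norm closure of $\pi\left<A\right>$ and $\mathscr{B}(\mathcal{H})$, none of which you establish. The paper avoids Ando entirely: with an orthonormal system $(\xi_k)_k$ and $\xi=\sum_k2^{-k}\xi_k$ it sets $f(a)=\sum_k2^{-k}(\pi(a)\xi_k\!\mid\!\xi_k)$, $g(a)=\sum_k10^{-k}(\pi(a)\xi_k\!\mid\!\xi_k)$, $p(a)=(\pi(a)\xi\!\mid\!\xi)$, checks $0\neq p\leq f\ll g$ directly, and proves $p\perp g$ using the purity of $p$ together with Kadison transitivity to produce elements $a_n$ with $\pi(a_n)\xi=\xi$ while $g(a_n^*a_n)\to0$; non-uniqueness then follows from Lemma \ref{uniqlem1} (iii). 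The passage between general $^*$-algebras and $C^*$-algebras (factoring through a common $C^*$-seminorm, dense subalgebras, quotients) also has to be carried out explicitly in both directions. Until such a construction and transfer argument are supplied, the necessity half is not proved.
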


Section \ref{biz}, the main part of the article, provides the proof of this theorem.
Section \ref{utols} collects examples and remarks related to the Lebesgue decomposition and the uniqueness over general $^*$-algebras.
In particular, we present examples which are not $C^*$-algebras, but admit many non-zero representable positive functionals and topologically irreducible representations. 
For these $^*$-algebras, the Lebesgue decomposition theory of representable positive functionals is non-tri\-vi\-al. 
To mention here such an example, the $L^1$-space of a Hausdorff locally compact group is a Banach $^*$-algebra in a natural manner. 
The finite dimensional property for irreducible representations in our Theorem \ref{main} indicates a connection with an important class of locally compact groups. The so-called Moore groups are the ones that have only finite dimensional continuous, unitary, topologically irreducible representations. This suggests that the uniqueness of the Lebesgue decomposition of positive functionals over the $L^1$-algebras actually characterizes Moore groups. We verify this in Corollary \ref{Moore}, and other examples from the classes of $G^*$-algebras and $C^*$-convex algebras are also included in Section \ref{utols}.

\section{PROOF OF THE THEOREM}\label{biz}

We divide the proof into three parts. 
First we show that if a $C^*$-algebra $A$ admits an infinite dimensional irreducible representation, then the Lebesgue decomposition of positive linear functionals over $A$ is not unique (Theorem \ref{infinite}). 
The argument makes a heavy use of Kadison's theorem on transitivity of irreducible representations of $C^*$-algebras (see Theorem \ref{Kadison} below), and a technical result on the uniqueness (Lemma \ref{uniqlem1}).  

The second part deals with the converse: if a $C^*$-algebra $A$ has only finite dimensional irreducible representations, then the Lebesgue decomposition over $A$ is unique (Theorem \ref{mainC}). The proof lies upon the following results: a theorem on extensions from closed hereditary $^*$-subalgebras (Corollary \ref{heredit}), the correspondence between the positive functionals on $A$ and the normal positive functionals on the enveloping von Neumann algebra $W^*(A)$ (Lemma \ref{abszlem}, Corollary \ref{fedoWun}), H. Kosaki's theorem on finite von Neumann algebras (Theorem \ref{Kosaki}) and M. Hamana's characterization of $C^*$-algebras with only finite dimensional irreducible representations (Theorem \ref{Hamana}). (We recall the property "hereditary" and the concept of the enveloping von Neumann algebra in the concrete subsection.)

From these two special cases, we conclude the characterization for general $^*$-algebras in the third part (Theorem \ref{mainnew}).

\subsection{$C^*$-algebras with infinite dimensional irreducible rep\-re\-sen\-ta\-ti\-ons}\label{ccsillagelo}
Before we prove our result on the non-uniqueness, we must record some facts of the representation theory of $C^*$-algebras.

R. V. Kadison's remarkable theorem states that a topologically irreducible representation of a $C^*$-algebra is algebraically irreducible (\cite{Kad}; Theorem 9.6.2 in \cite{Palmer2}). 
Hence, in the case of $C^*$-algebras, we simply speak about \emph{irreducible representations}.  

Every positive functional $f$ on a $C^*$-algebra $A$ is continuous and representable (\cite{Palmer2}, Theorem 9.5.17). 
If $A$ is unital, then for the identity $\mathbf{1}$ we have $f(\mathbf{1})=\Vert f\Vert$ (\cite{Dixmier}, Proposition 2.1.4). 
We say that a linear functional $f:A\to\mathbb{C}$ is a \emph{state}, if it is positive and $\Vert f\Vert=1$.  
A non-zero positive functional $p$ on $A$ is \emph{pure}, if for every $p':A\to\mathbb{C}$  positive functional, the inequality $p'\leq p$ implies the existence of a non-negative number $\lambda$ such that $p'=\lambda p$. By 2.5.4 in \cite{Dixmier}, this is equivalent to the property that the representation $\pi_p$ is irreducible, where $\pi_p$ is the representation associated with $p$ by the GNS-construction (see II.6.4 in \cite{Bla2}).
It is obvious that a positive multiple of a pure functional is pure.

The following version of Kadison's transitivity theorem on irreducible representations is extremely useful for us (see Lemma 5.4.2 and the proof of Theorem 5.4.3 in \cite{KR1}).
\begin{theorem}\label{Kadison}
	Let $\pi:A\to\mathscr{B}(\mathcal{H})$ be an irreducible representation of the $C^*$-algebra $A$ on the Hilbert space $\mathcal{H}$, and let $m\in\mathbb{N}^+$, $r\geq 0$.
	If $(\xi_k)_{1\leq k\leq m}$ and $(\eta_k)_{1\leq k\leq m}$ are vector systems in $\mathcal{H}$ such that
	$(\xi_k)_{1\leq k\leq m}$ is orthonormal and $\Vert\eta_k\Vert\leq r$ for every $1\leq k\leq m$, then there exists an element $a\in A$ with the properties  
	\[
	\pi(a)\xi_k=\eta_k\ (1\leq k\leq m),\ \Vert\pi(a)\Vert\leq r\sqrt{2m}.
	\]
\end{theorem}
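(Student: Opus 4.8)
The statement is the classical Kadison transitivity theorem with a norm estimate, so the plan is to reproduce its standard proof (along the lines of Kadison--Ringrose): the engine is the Kaplansky density theorem, used to produce good \emph{approximate} interpolants, followed by a geometric iteration that upgrades approximate interpolation to exact interpolation while keeping the norm under control.

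First I would reduce to the case that $\pi$ is faithful. Both conclusions $\pi(a)\xi_k=\eta_k$ and $\Vert\pi(a)\Vert\le r\sqrt{2m}$ involve only the operator $\pi(a)$; since $\ker\pi$ is a closed two-sided $^*$-ideal and $A/\ker\pi$ is a $C^*$-algebra on which $\pi$ induces an isometric (still irreducible) representation, I may argue in that quotient and pull the resulting element back along the surjective quotient map. So assume $\pi$ faithful, identify $A$ with its image in $\mathscr{B}(\mathcal{H})$, and use $\Vert a\Vert=\Vert\pi(a)\Vert$. An irreducible representation is non-zero, hence non-degenerate, so $A$ is strong-operator dense in $A''$; and by Schur's lemma $A'=\mathbb{C}\,I$, whence $A''=\mathscr{B}(\mathcal{H})$. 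Finally, exploiting the orthonormality of $(\xi_k)_{1\le k\le m}$, introduce $T\in\mathscr{B}(\mathcal{H})$ defined by $T\zeta:=\sum_{k=1}^m(\zeta\!\mid\!\xi_k)\eta_k$; then $T\xi_k=\eta_k$ for every $k$ and, by Cauchy--Schwarz, $\Vert T\Vert\le\bigl(\sum_{k=1}^m\Vert\eta_k\Vert^2\bigr)^{1/2}\le r\sqrt m$. It remains to produce an $a\in A$ acting on $\xi_1,\dots,\xi_m$ exactly as $T$ does, with norm not much larger than $\Vert T\Vert$.

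The key approximation step is: for every $S\in\mathscr{B}(\mathcal{H})$ and $\delta>0$ there is $b\in A$ with $\Vert b\Vert\le\Vert S\Vert$ and $\sum_{k=1}^m\Vert b\xi_k-S\xi_k\Vert^2<\delta^2$; this is immediate from the Kaplansky density theorem (the closed unit ball of $A$ is strong-operator dense in that of $A''=\mathscr{B}(\mathcal{H})$), applied to $S/\Vert S\Vert$ and the finitely many vectors $\xi_1,\dots,\xi_m$. Now iterate: fix $\delta_1>\delta_2>\cdots>0$ with $\sum_{n\ge1}\delta_n$ as small as desired, set $\eta_k^{(0)}:=\eta_k$, and, having chosen $b_1,\dots,b_{n-1}$, put $\eta_k^{(n-1)}:=\eta_k-\sum_{j<n}b_j\xi_k$, let $S_n\zeta:=\sum_{k=1}^m(\zeta\!\mid\!\xi_k)\eta_k^{(n-1)}$ (so $S_n\xi_k=\eta_k^{(n-1)}$ and $\Vert S_n\Vert\le(\sum_k\Vert\eta_k^{(n-1)}\Vert^2)^{1/2}$), and apply the approximation step to $S_n$ with tolerance $\delta_n$ to obtain $b_n$. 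Then $\sum_k\Vert\eta_k^{(n)}\Vert^2<\delta_n^2$, so $\Vert b_{n+1}\Vert\le\Vert S_{n+1}\Vert<\delta_n$ for $n\ge1$, while $\Vert b_1\Vert\le\Vert S_1\Vert=\Vert T\Vert\le r\sqrt m$; hence $a:=\sum_{n\ge1}b_n$ converges in $A$ with $\Vert a\Vert\le r\sqrt m+\sum_{n\ge1}\delta_n<r\sqrt{2m}$, and $\Vert\eta_k^{(n)}\Vert<\delta_n\to0$ forces $a\xi_k=\lim_n\sum_{j\le n}b_j\xi_k=\eta_k$ for every $k$. Pulling $a$ back to the original algebra completes the proof.

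The one genuinely delicate point is this passage from approximate to exact interpolation with norm control; it is exactly the geometric decay of the $\delta_n$ that makes the series converge and renders the norm penalty negligible, so that one even obtains the sharper bound $(1+\varepsilon)r\sqrt m$ for any $\varepsilon>0$, the constant $r\sqrt{2m}$ in the statement being merely convenient. I expect no other obstacle: Schur's lemma, the non-degeneracy of irreducible representations, the double commutant theorem and the Kaplansky density theorem are all standard tools recalled in the Preliminaries. (Should the self-adjoint, positive or unitary refinements of the transitivity theorem be wanted as well, one would run the same scheme with the corresponding form of Kaplansky density and a self-adjoint, positive, resp.\ unitary interpolant $T$; these are not needed in the present paper.)
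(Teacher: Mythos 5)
Your argument is correct and is essentially the proof the paper points to (Lemma 5.4.2 and the proof of Theorem 5.4.3 in Kadison--Ringrose): the Kaplansky-density approximation step followed by the geometric iteration, with the rank-$m$ interpolant $T$ giving $\Vert T\Vert\leq r\sqrt{m}$ and the tail of the series absorbing the remaining $(\sqrt{2}-1)r\sqrt{m}$. The paper itself cites this result without proof, so there is nothing further to compare.
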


We need the following statement (e.g.: \cite{SzAbs}, Lemma 5.2), which characterizes the decomposition's uniqueness by means of absolute continuity and singularity. It is also indispensable in the finite dimensional case, as well as in the proof of our main result, Theorem \ref{mainnew}.

\begin{lemma}\label{uniqlem1}
	Assume that $A$ is a $^*$-algebra. Then the following statements are equivalent.
	\begin{itemize}
		\item[(i)] The Lebesgue decomposition of representable positive functionals over $A$ is u\-ni\-que. 
		
		\item[(ii)] For all representable positive functionals $t, f$ and $g$ on $A$, the property
		\[
		t\leq f\ll g
		\]
		implies that $t\ll g$.
		
		\item[(iii)]\label{szing} For all representable positive functionals $f,g$ and $p$ on $A$, the properties 
		\[
		p\leq f\ll g;\ p\perp g
		\]
		imply that $p=0$.
	\end{itemize}  
\end{lemma}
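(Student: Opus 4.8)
The plan is to route the equivalences through $(iii)$: I would establish $(ii)\Leftrightarrow(iii)$ and $(i)\Leftrightarrow(iii)$ separately. Throughout I would use freely the existence and, crucially, the maximality clause of Theorem~\ref{Lebesgue}; the implications \eqref{WDtul} and the third bullet of Remark~\ref{altul} (a functional that is both absolutely continuous with respect to $g$ and singular to $g$ must vanish); the fact that a representable positive functional dominated by a representable positive functional is itself representable, so that order intervals stay inside the representable cone; and two elementary stability properties, namely that absolute continuity with respect to a fixed $g$ is preserved under sums, and that the relation $\cdot\perp g$ is downward hereditary, i.e.\ $q\le p$ and $p\perp g$ imply $q\perp g$, directly from Definition~\ref{singdef}.

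First I would dispatch $(ii)\Leftrightarrow(iii)$, which is immediate. If $(ii)$ holds and $p\le f\ll g$ with $p\perp g$, then $(ii)$ gives $p\ll g$, and combined with $p\perp g$ this forces $p=0$ by Remark~\ref{altul}; so $(iii)$ holds. Conversely, assuming $(iii)$, take $t\le f\ll g$ and decompose $t=t_r+t_s$ with respect to $g$ using Theorem~\ref{Lebesgue}; since $t_s\le t\le f\ll g$ and $t_s\perp g$, property $(iii)$ forces $t_s=0$, whence $t=t_r\ll g$, which is $(ii)$.

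Next, for $(iii)\Rightarrow(i)$ I would argue as follows. Let $f=f_r'+f_s'$ be any decomposition with $f_r'\ll g$ and $f_s'\perp g$, and let $f=f_r+f_s$ be the Lebesgue decomposition from Theorem~\ref{Lebesgue}. Since $f_r'\le f$ and $f_r'\ll g$, maximality gives $f_r'\le f_r$, so $p:=f_r-f_r'$ is a representable positive functional with $p\le f_r\ll g$. From $f_r+f_s=f=f_r'+f_s'$ we obtain $p=f_s'-f_s$, hence $p\le f_s'$, and therefore $p\perp g$ by downward hereditariness. Since $p\le f_r$, $f_r\ll g$ and $p\perp g$, property $(iii)$ forces $p=0$, i.e.\ $f_r'=f_r$ and $f_s'=f_s$, which is the uniqueness in $(i)$. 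For the converse $(i)\Rightarrow(iii)$ I would proceed thus: let $p\le f\ll g$ with $p\perp g$. The functional $f-p$ is a positive functional dominated by $f$, hence representable, so Theorem~\ref{Lebesgue} gives $f-p=u_r+u_s$ with $u_r\ll g$ and $u_s\perp g$. Then $f=u_r+(u_s+p)$ is again a decomposition of $f$ into an absolutely continuous part and a part singular to $g$; but since $f\ll g$, by Theorem~\ref{Lebesgue} the Lebesgue decomposition of $f$ with respect to $g$ is $f=f+0$, so the uniqueness assumed in $(i)$ forces $u_s+p=0$ and hence $p=0$.

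The one step I expect to require genuine work rather than formal bookkeeping is the claim, used in $(i)\Rightarrow(iii)$, that $u_s+p\perp g$, i.e.\ that the sum of two representable positive functionals each singular to $g$ is still singular to $g$. This is the only non-elementary ingredient; I would derive it from the mutual disjointness of the GNS representations attached to functionals singular to $g$, or simply quote the corresponding fact from \cite{SzSing} (it is also implicit in \cite{SzAbs}). With that stability property in hand, the lemma falls out by combining Theorem~\ref{Lebesgue} with the elementary consequences of Definitions~\ref{absdef} and~\ref{singdef} collected in Remark~\ref{altul}.
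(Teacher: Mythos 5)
Your proof is essentially correct and complete except at the one point you yourself flag, and there the situation is mixed. The implications $(ii)\Leftrightarrow(iii)$ and $(iii)\Rightarrow(i)$ are exactly right: they use only Theorem~\ref{Lebesgue} (existence, maximality, and the fact that $f_s=f-f_r$ is representable), the downward hereditariness of $\cdot\perp g$, and the third bullet of Remark~\ref{altul}. Note that the paper does not reproduce a proof of this lemma (it cites Lemma~5.2 of \cite{SzAbs}), but Remark~\ref{nemuniqrem} contains its argument for the direction you handle last, and that argument is worth comparing with yours.

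For $(i)\Rightarrow(iii)$ you decompose $f-p=u_r+u_s$ and then need that $u_s+p\perp g$, i.e.\ that singularity to $g$ is additive. That fact is true and quotable, so your argument closes, but two remarks are in order. First, your primary suggested derivation of it is based on a false premise: singularity in the sense of Definition~\ref{singdef} is \emph{not} the same as disjointness of the GNS representations. For example, on $M_2(\mathbb{C})$ the two vector states coming from orthonormal vectors $e_1,e_2$ in the (unique) irreducible representation are singular to each other (any common minorant is a multiple of each pure state, hence $0$), yet their GNS representations are unitarily equivalent. So you would have to fall back on quoting the additivity result itself, or derive it, e.g., from the Riesz decomposition property of the cone of representable functionals. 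Second, the external fact is avoidable: following Remark~\ref{nemuniqrem}, apply uniqueness not to $f-p$ but to $h:=f+p$. Since $f\ll g$ and $p\perp g$, the sum $h=f+p$ is already a decomposition of the required form with no additivity needed; uniqueness forces $h_r=f$ and $h_s=p$, and the clause $h_s\perp h_r$ of Theorem~\ref{Lebesgue} gives $f\perp p$, which together with $p\leq f$ and $p\leq p$ yields $p=0$ directly from Definition~\ref{singdef}. Substituting that argument for your $(i)\Rightarrow(iii)$ step makes the whole proof self-contained within the tools already established in the paper.
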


\begin{remark}\label{nemuniqrem}
	We note here that if $p=0$ in (iii) is not fulfilled, then the Lebesgue decomposition of the representable functional $h:=f+p$ with respect to $g$ is not unique.
	Indeed, if $p\leq f\ll g,\ p\perp g$ with $p\neq0$ and the decomposition (Theorem \ref{Lebesgue}) would be unique for every pair of representable positive functionals, then for the functional $h$ the assumptions imply that $h_r=f$ and $h_s=p$ in the decomposition with respect to $g$. In accordance to the theorem, the regular and singular parts are singular to each other, i.e., $f\perp p$. But together with the inequality $p\leq f$, this is impossible by Remark \ref{altul}, if $p\neq0$.
	(See also the proof of Lemma 5.2 in \cite{SzAbs}).
\end{remark}

Our first result is the non-uniqueness over $C^*$-algebras with infinite dimensional irreducible representations.

\begin{theorem}\label{infinite}
	Let $A$ be a $C^*$-algebra and assume that $\pi:A\to\mathscr{B}(\mathcal{H})$ is an irreducible representation on the infinite dimensional Hilbert space $\mathcal{H}$. Then the Lebesgue decomposition of positive functionals over $A$ is not unique.
\end{theorem}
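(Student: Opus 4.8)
The plan is to exploit the transitivity theorem (Theorem \ref{Kadison}) to build, inside the irreducible representation $\pi$, a positive functional $p$ which is dominated by an absolutely continuous functional $f\ll g$ but which is singular with respect to $g$, and then invoke Lemma \ref{uniqlem1}(iii) (together with Remark \ref{nemuniqrem}) to conclude non-uniqueness. Since $\mathcal H$ is infinite dimensional, fix an orthonormal sequence $(e_n)_{n\in\mathbb N}$ in $\mathcal H$. The idea is to choose a reference vector, say $\xi:=e_0$, and put $g(a):=(\pi(a)\xi\mid\xi)$. Then $g$ is a (pure, hence representable) positive functional, and the GNS space of $g$ is a copy of the cyclic subspace generated by $\xi$, which by irreducibility is all of $\mathcal H$. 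The key point is that $L_g=\{a\in A:\pi(a)\xi=0\}$, so absolute continuity with respect to $g$ will be controlled by the vector $\xi$.

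The heart of the argument is to produce a vector $\eta\in\mathcal H$ — the candidate for a state $p(a):=(\pi(a)\eta\mid\eta)$, or rather its "directional" restriction — such that $p\perp g$ yet $p$ is dominated by something absolutely continuous over $g$. I would take $\eta$ to be an infinite combination $\eta=\sum_n c_n e_n$ with suitably decaying coefficients $c_n$, chosen so that $\eta$ is "spread out" relative to $\xi=e_0$: concretely, I want the functional $f$ whose GNS vector combines $\xi$ and $\eta$ (for instance $f(a):=(\pi(a)(\xi+\eta)\mid \xi+\eta)$ or a direct-sum-type construction $f := g + p$) to satisfy $f\ll g$, while $p\perp g$. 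Absolute continuity $f\ll g$ should follow by approximating $f$ from below by the functionals $f_N(a):=(\pi(a)P_N(\xi+\eta)\mid P_N(\xi+\eta))$ or similar finite-rank truncations and, using Theorem \ref{Kadison}, showing each such truncation is dominated by a multiple of $g$: this is exactly where transitivity enters, since given finitely many vectors and the single vector $\xi$ one can find $a\in A$ with $\pi(a)\xi$ equal to a prescribed vector and norm controlled, which lets one compare the truncated functionals to $g$ through Cauchy–Schwarz / the standard $f_0\le \alpha g \iff$ a Radon–Nikodym operator bound. Singularity $p\perp g$ should follow because any representable $p'\le p$ and $p'\le g$ corresponds (via the GNS/positive-operator picture) to an operator simultaneously below the rank-one projection onto $\xi$-direction and below the "$\eta$" piece, forcing $p'=0$ by the orthogonality/genericity of the coefficients $c_n$.

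Once $p\neq 0$, $p\le f$, $f\ll g$, and $p\perp g$ are all in hand, Lemma \ref{uniqlem1}((iii)$\Rightarrow$(i)) — equivalently the explicit observation in Remark \ref{nemuniqrem} applied to $h:=f+p$ — immediately gives that the Lebesgue decomposition over $A$ is not unique. So the proof reduces to the single construction of the pair $(f,p)$ with these four properties.

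The main obstacle I anticipate is verifying $f\ll g$: one must exhibit the increasing sequence $f_n\le f_{n+1}$ with $f_n\le\alpha_n g$ and $\sup_n f_n=f$ required by Definition \ref{absdef}, and the bound $f_n\le\alpha_n g$ is precisely where the infinite dimensionality and Kadison's transitivity theorem must be used in a nontrivial way — a purely finite-dimensional $\mathcal H$ would not admit the needed "room" to separate $p$ from $g$ while keeping the truncations dominated by $g$. A secondary technical point is ensuring the coefficients $c_n$ are chosen so that $p$ is genuinely singular to $g$ and not merely not-dominated; this likely requires the $c_n$ to be, in an appropriate sense, linearly independent from the $\xi$-direction across all finite truncations, and making that precise in the GNS picture is the delicate bookkeeping step.
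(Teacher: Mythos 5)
Your overall strategy --- produce non-zero positive functionals with $p\leq f\ll g$ and $p\perp g$ and then invoke Lemma \ref{uniqlem1}(iii) (or Remark \ref{nemuniqrem}) --- is exactly the right one and is the one the paper follows. However, the concrete construction you sketch cannot work, and the obstruction sits precisely at the step you yourself flag as the main difficulty, namely $f\ll g$. You take $g$ to be the vector state $g(a)=(\pi(a)\xi\mid\xi)$ with $\xi=e_0$. Since $\pi$ is irreducible, $g$ is a \emph{pure} positive functional, and so is every positive multiple $\alpha g$. Hence any positive functional $f_0$ with $f_0\leq\alpha g$ is of the form $f_0=\lambda g$; consequently every increasing sequence $(f_n)_{n\in\mathbb{N}}$ with $f_n\leq\alpha_n g$ consists of scalar multiples of $g$, and its supremum, if it is a (finite-valued) positive functional at all, is again a scalar multiple of $g$. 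In other words, $f\ll g$ forces $f=\lambda g$. Combined with $p\leq f$ this gives $p\leq\lambda g$, hence $p\ll g$, which together with $p\perp g$ forces $p=0$ by Remark \ref{altul}. So no choice of $\eta$ and of coefficients $c_n$ can produce the four properties you need simultaneously; no amount of Kadison transitivity will rescue $f\ll g$ once $g$ is pure.

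The fix is to reverse the roles of purity. In the paper's proof it is $p$ that is taken to be a pure vector state, $p(a)=(\pi(a)\xi\mid\xi)$ with $\xi=\sum_{k}2^{-k}\xi_k$, while $f$ and $g$ are the non-pure ``diagonal'' functionals $f(a)=\sum_{k}2^{-k}(\pi(a)\xi_k\mid\xi_k)$ and $g(a)=\sum_{k}10^{-k}(\pi(a)\xi_k\mid\xi_k)$. Then $f\ll g$ is elementary (the truncations $f_n$ satisfy $f_n\leq 10^n g$, with no transitivity needed), $p\leq f$ follows from Cauchy--Schwarz, and Kadison's Theorem \ref{Kadison} is used only for the singularity $p\perp g$: purity of $p$ reduces that claim to showing that $\lambda p\leq g$ forces $\lambda=0$, which one obtains by constructing elements $a_n$ with $\pi(a_n)\xi=\xi$, $\pi(a_n)\xi_k=0$ for $k\leq n$ and $\Vert\pi(a_n)\Vert\leq 2^n\sqrt{2(n+1)}$, so that $p(a_n^*a_n)$ stays bounded below while $g(a_n^*a_n)\to0$. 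Your draft places the transitivity theorem at the wrong step and, more importantly, makes pure the one functional ($g$) that must not be.
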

\begin{proof}
	We show that there exist non-zero positive functionals $f,g$ and $p$ on $A$ such that 
	\begin{equation}\label{nemuniq}
	p\leq f\ll g\ \mathrm{and}\ p\perp g.
	\end{equation}
	Thus, by Lemma \ref{uniqlem1} (iii), we conclude that the Lebesgue decomposition of positive functionals over $A$ is not unique.
	
	Let us choose an orthonormal system $(\xi_k)_{k\in\mathbb{N}^+}$ in $\mathcal{H}$ and put 
\[
\xi:=\sum_{k=1}^{+\infty}\frac{1}{2^k}\xi_k.
\] 
	Define the mappings $f,g,p:A\to\mathbb{C}$ by the next formulas:
	\[
	f(a):=\sum_{k=1}^{+\infty}\frac{1}{2^k}(\pi(a)\xi_k\!\mid\!\xi_k);\ g(a):=\sum_{k=1}^{+\infty}\frac{1}{10^k}(\pi(a)\xi_k\!\mid\!\xi_k);\ p(a):=(\pi(a)\xi \!\mid\!\xi).
	\]
	Since $\pi$ is a representation, it is easy to see that these functions are positive linear functionals on $A$, and for any $a\in A$
	\begin{equation}\label{fandg}
	f(a^*a)=\sum_{k=1}^{+\infty}\frac{1}{2^k}\Vert\pi(a)\xi_k\Vert^2;\ g(a^*a)=\sum_{k=1}^{+\infty}\frac{1}{10^k}\Vert\pi(a)\xi_k\Vert^2;\ p(a^*a)=\Vert\pi(a)\xi\Vert^2.
	\end{equation}
	Each of these functionals are not zero by the irreducibility of $\pi$, since every non-zero vector in $\mathcal{H}$ is cyclic, in particular, $\xi$ and $\xi_k$ are cyclic vectors for every $k\in\mathbb{N}^+$.
	Moreover, the positive functional $p$ is pure, since the vector states determined by irreducible representations are pure (e.g., \cite{KR2}, page 728). According to $\Vert\xi\Vert^2=\frac13$, the functional $3p$ is a pure vector state. 
	
	The inequality $p\leq f$ holds. Indeed, if $a\in A$, then by \eqref{fandg}, we have
	\[
	p(a^*a)=\Vert\pi(a)\xi\Vert^2=\left\Vert\pi(a)\left(\sum_{k=1}^{+\infty}\frac{1}{2^k}\xi_k\right)\right\Vert^2=\left\Vert\sum_{k=1}^{+\infty}\frac{1}{2^k}(\pi(a)\xi_k)\right\Vert^2\leq
	\]
	\[
	\left(\sum_{k=1}^{+\infty}\frac{1}{2^k}\Vert\pi(a)\xi_k\Vert\right)^2=\left(\sum_{k=1}^{+\infty}\sqrt{\frac{1}{2^k}}\cdot\frac{\Vert\pi(a)\xi_k\Vert}{\sqrt{2^k}}\right)^2\leq\left(\sum_{k=1}^{+\infty}\frac{1}{2^k}\right)\sum_{k=1}^{+\infty}\frac{\Vert\pi(a)\xi_k\Vert^2}{2^k}=
\]
\[
f(a^*a),
	\]
	where we applied the Cauchy-Schwarz inequality to the sequences $\left(\sqrt{\frac{1}{2^k}}\right)_{k\in\mathbb{N}^+}$ and $\left(\frac{\Vert\pi(a)\xi_k\Vert}{\sqrt{2^k}}\right)_{k\in\mathbb{N}^+}$ in the last estimate.

	The functional $f$ is absolutely continuous with respect to $g$. 
For all $n\in\mathbb{N}^+$ and $a\in A$, let
	\[
	f_n(a)=\sum_{k=1}^{n}\frac{1}{2^k}(\pi(a)\xi_k\!\mid\!\xi_k).
	\]
	Then for abritrary $a\in A$ and $n\in\mathbb{N}^+$, it is obvious that the inequality 
\[
f_n(a^*a)\leq f_{n+1}(a^*a)
\]
and the equation 
	\[
	f(a^*a)=\sup_{n\in\mathbb{N}}f_n(a^*a)
	\]
	hold true. 
Moreover,
	\[
	\frac{1}{10^n}f_n(a^*a)=\frac1{10^n}\sum_{k=1}^{n}\frac{1}{2^k}(\pi(a^*a)\xi_k\!\mid\!\xi_k)\leq \frac1{10^n}\sum_{k=1}^{n}(\pi(a^*a)\xi_k\!\mid\!\xi_k)\leq
	\]
	\[
	\leq\sum_{k=1}^{n}\frac{1}{10^k}(\pi(a^*a)\xi_k\!\mid\!\xi_k)\leq g(a^*a),
	\] 
	that is, $f_n\leq 10^ng$ for every $n\in\mathbb{N}^+$. Thus, Definition \ref{absdef} implies that $f\ll g$.

	To see that $p$ and $g$ are singular, we verify Definition \ref{singdef}. Suppose that $p'$ is a positive functional such that
	\[
	p'\leq p,\ p'\leq g.
	\]
	We have to prove that $p'=0$.
	First note that the pureness of $p$ implies the existence of a number $\lambda\geq0$ for which $p'=\lambda p$ is true, i.e.,
	$\lambda p\leq g$ holds.

	For every positive natural number $n$ let
	\[
	\xi_{n}':=\xi-\sum_{k=1}^n\frac{1}{2^k}\xi_k=\sum_{k=n+1}^{+\infty}\frac{1}{2^k}\xi_k.
	\]
	An easy calculation shows that
	\[
	\Vert\xi_n'\Vert=\left\Vert\sum_{k=n+1}^{+\infty}\frac{1}{2^k}\xi_k\right\Vert=\frac{1}{2^n\sqrt{3}}.
	\]
	Note that $(\xi_1,\dots,\xi_n,2^n\sqrt{3}\xi_{n}')$ is an orthonormal system in $\mathcal{H}$. 
Then, considering the system $(\eta_k)_{1\leq k\leq n+1}$ with $\eta_k=0$ for $1\leq k\leq n$ and $\eta_{n+1}=2^n\sqrt{3}\xi$, in view of the irreducibility of $\pi$, by the Kadison's Theorem \ref{Kadison}, there exists an element $a_n\in A$ for all $n\in\mathbb{N}^+$ such that
	\[
	\pi(a_n)\xi_k=0\ (1\leq k\leq n);\ \pi(a_n)[2^n\sqrt{3}\xi_n']=2^n\sqrt{3}\xi;\ \Vert\pi(a_n)\Vert\leq 2^n\sqrt{2(n+1)}.
	\]
	For the operator $\pi(a_n)$, we get
	\[
	\pi(a_n)\xi=\pi(a_n)\left(\sum_{k=1}^{+\infty}\frac{1}{2^k}\xi_k\right)=\pi(a_n)\left(\sum_{k=n+1}^{+\infty}\frac{1}{2^k}\xi_k\right)=\pi(a_n)\xi_n'=\xi.
	\]
	Hence, for the functionals $p$ and $g$, we obtain by \eqref{fandg} that    
	\[
	p(a_n^*a_n)=\Vert\pi(a_n)\xi\Vert^2=\Vert\xi\Vert^2=\frac13;
	\]
	\[
	g(a_n^*a_n)=\sum_{k=1}^{+\infty}\frac{1}{10^k}\Vert\pi(a_n)\xi_k\Vert^2=\sum_{k=n+1}^{+\infty}\frac{1}{10^k}\Vert\pi(a_n)\xi_k\Vert^2\leq
	\]
	\[
	\sum_{k=n+1}^{+\infty}\frac{1}{10^k}\Vert\pi(a_n)\Vert^2\Vert\xi_k\Vert^2
	\leq\left(2^n\sqrt{2(n+1)}\right)^2\sum_{k=n+1}^{+\infty}\frac{1}{10^k}=\frac{2}{9}\left(\frac{2}{5}\right)^n(n+1).
	\]
	Using the inequality $\lambda p=p'\leq g$, we conclude for every $n\in\mathbb{N}^+$ that
	\[
	\lambda=\lambda 3p(a_n^*a_n)\leq 3g(a_n^*a_n)\leq\frac{2}{3}\left(\frac{2}{5}\right)^n(n+1),
	\]
	so the number $\lambda$ is zero.
	We proved all of the conditions in \eqref{nemuniq},
	the non-u\-ni\-qu\-e\-ness follows.
\end{proof}

According to Remark \ref{nemuniqrem}, the Lebesgue decomposition of $f+p$ with respect to $g$ is not unique. Hence, our proof  gives concrete positive functionals for which the Lebesgue decomposition is not unique. 

From the arguments above, it is easy to obtain an example which shows the non-uniqueness of the Lebesgue decomposition of normal positive functionals on von Neumann algebras. 
Let $\mathcal{H}$ be an infinite dimensional Hilbert space. 
The identity representation of $\mathscr{B}(\mathcal{H})$ on $\mathcal{H}$ is irreducible, thus, the arguments above obviously works well for $A:=\mathscr{B}(\mathcal{H})$ and the identity representation. Furthermore, the functionals $f, g$ and $p$ are normal functionals on the von Neumann algebra $\mathscr{B}(\mathcal{H})$ (\cite{Bla2}, III.2.1.4).
(We note that this $^*$-algebra also appeared in Example 6.6 of \cite{Zsig1} to reveal the non-uniqueness over von Neumann algebras.
However, the arguments make a heavy use of Ando's non-trivial theorems from \cite{Ando}.  
In contrast to this example, we have no need for the results of Ando.)

\subsection{$C^*$-algebras having only finite dimensional irreducible representations}
In this part, we prove the uniqueness over $C^*$-algebras without infinite dimensional irreducible representations (Theorem \ref{mainC}).
The main idea is that the uniqueness problem can be transferred to the case of enveloping von Neumann algebras and normal functionals. That is, the decomposition is unique over a $C^*$-algebra $A$ precisely when the decomposition is unique among the normal positive functionals on the enveloping von Neumann algebra $W^*(A)$ (Corollary \ref{fedoWun}). By the aid of highly non-trivial results of Kosaki (Theorem \ref{Kosaki}) and Hamana (Theorem \ref{Hamana}), we are able to prove the uniqueness over the latter $^*$-algebra.

For this purpose, we need some preparations. First, we obtain results on positive extensions from closed hereditary $^*$-subalgebras which preserves absolute continuity. After that, we turn to enveloping von Neumann algebras and normal positive functionals (see the discussion after Remark \ref{refher}).

We recall some facts about positive extensions of functionals defined on a $C^*$-subalgebra $B$. 
(By an \emph{extension} of a positive functional we always mean a positive linear extension.)
It is obvious that the norm of every extension is equal or greater than the norm of the functional on $B$.
Moreover, every $h:B\to\mathbb{C}$ positive functional has a norm preserving extension to $A$ (\cite{Pedersen}, Proposition 3.1.6).

If $h_1,h_2:B\to\mathbb{C}$ are positive functionals, $\widetilde{h_1}$ and $\widetilde{h_2}$ are norm preserving extensions to $A$, then their sum $\widetilde{h_1}+\widetilde{h_2}$ is a norm preserving extension of $h_1+h_2$. 
Indeed, it clearly extends $h_1+h_2$. Furthermore, using the terminology of 1.4 in \cite{Pedersen}, for every approximate unit $(e_i)_{i\in I}$ in $B$,  the net $(f(e_i))_{i\in I}$ converges to $\Vert f\Vert$ for a positive functional $f$ defined on $B$ (\cite{Pedersen}, 3.1.4). Thus,
on the one hand,
\begin{equation}\label{normsum1}
\Vert h_1\Vert+\Vert h_2 \Vert=\lim_{i,I}h_1(e_i)+\lim_{i,I}h_2(e_i)=\lim_{i,I}(h_1+h_2)(e_i)=\Vert h_1+h_2\Vert.
\end{equation}
On the other hand, if the net $(\widetilde{e_j})_{j\in J}$ is an approximate unit in $A$, then norm preservity implies
\begin{equation}\label{normsum2}
\Vert h_1\Vert+\Vert h_2 \Vert=\lim_{j,J}\widetilde{h_1}(\widetilde{e_j})+\lim_{j,J}\widetilde{h_2}(\widetilde{e_j})=\lim_{j,J}(\widetilde{h_1}+\widetilde{h_2})(\widetilde{e_j})=\Vert \widetilde{h_1}+\widetilde{h_2}\Vert.
\end{equation}
Hence, by \eqref{normsum1} and \eqref{normsum2}, we conclude that 
\begin{equation}\label{normpres1}
\Vert \widetilde{h_1}+\widetilde{h_2}\Vert=\Vert\widetilde{h_1}\Vert+\Vert\widetilde{h_2}\Vert=\Vert h_1\Vert+\Vert h_2\Vert=\Vert h_1+h_2\Vert.
\end{equation}

The following lemma shows the existence of norm preserving extensions which also preserve the relation $\ll$.

\begin{lemma}\label{closedsubext}
	If $A$ is a $C^*$-algebra, then for positive functionals $f$ and $g$ defined on a closed $^*$-subalgebra $B$ of $A$ the following are true:
	\begin{enumerate}
		\item[(1)] $f\leq g$ $\Leftrightarrow$ there exist norm preserving extensions $\widetilde{f}$, $\widetilde{g}$ of $f$ and $g$ to $A$ such that $\widetilde{f}\leq\widetilde{g}$.
		
		\item[(2)] $f\ll g$ $\Leftrightarrow$ there exist norm preserving extensions $\widetilde{f}$, $\widetilde{g}$ of $f$ and $g$ to $A$ such that $\widetilde{f}\ll\widetilde{g}$.
		
	\end{enumerate}
\end{lemma}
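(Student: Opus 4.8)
The plan is to handle the two backward implications by restriction, the forward implication of part (1) by a one‑line splitting, and the forward implication of part (2) by extending an approximating sequence term by term and then passing to a weighted average, so as to manufacture a \emph{single} dominating extension of $g$.

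First I would dispose of the routine parts. If $\widetilde f\leq\widetilde g$ on $A$, restricting to $B$ gives $f\leq g$; if $\widetilde f\ll\widetilde g$ on $A$, then by the first item of Remark \ref{altul} the restrictions satisfy $f\ll g$. For the forward direction of (1) I would write $g=f+(g-f)$ with $g-f\geq0$ on $B$, pick norm preserving extensions $\widetilde f$ of $f$ and $\widetilde{g-f}$ of $g-f$ to $A$ (\cite{Pedersen}, Proposition 3.1.6), and set $\widetilde g:=\widetilde f+\widetilde{g-f}$; by \eqref{normpres1} this $\widetilde g$ is a norm preserving extension of $g$, and $\widetilde g-\widetilde f=\widetilde{g-f}\geq0$.

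The substance is the forward direction of (2). Fix, as in Definition \ref{absdef}, a sequence $(f_n)$ of representable positive functionals on $B$ and positive reals $(\alpha_n)$ with $f_n\leq f_{n+1}$, $f_n\leq\alpha_ng$ and $f=\sup_nf_n$. I would pass to the increments $d_1:=f_1$ and $d_n:=f_n-f_{n-1}$ for $n\geq2$, which are positive functionals on $B$ with $\sum_{n\leq N}d_n=f_N$ and $d_n\leq f_n\leq\alpha_ng$. Applying the forward direction of (1) to each pair $(d_n,\alpha_ng)$ yields norm preserving extensions $\widetilde{d_n}$ of $d_n$ and $E_n$ of $\alpha_ng$ to $A$ with $\widetilde{d_n}\leq E_n$; rescaling, $\widehat g_n:=\alpha_n^{-1}E_n$ is a norm preserving extension of $g$ with $\widetilde{d_n}\leq\alpha_n\widehat g_n$.

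The hard part --- mild, but it is exactly where a naive term‑by‑term extension stalls --- is that the $\widehat g_n$ depend on $n$, whereas $\ll$ requires one fixed dominating functional. I would repair this by averaging: put $\widetilde g:=\sum_{n=1}^\infty 2^{-n}\widehat g_n$, a norm convergent series, so $\widetilde g$ is a positive functional on $A$; restricting termwise gives $\widetilde g|_B=\sum_n2^{-n}g=g$, and $\Vert g\Vert\leq\Vert\widetilde g\Vert\leq\sum_n2^{-n}\Vert\widehat g_n\Vert=\Vert g\Vert$, so $\widetilde g$ is a norm preserving extension of $g$. Since $\widetilde g-2^{-n}\widehat g_n=\sum_{m\neq n}2^{-m}\widehat g_m\geq0$, one has $\widehat g_n\leq2^n\widetilde g$, hence $\widetilde{d_n}\leq\alpha_n2^n\widetilde g$. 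Now set $\widetilde f_N:=\sum_{n\leq N}\widetilde{d_n}$: by iterating \eqref{normpres1} it is a norm preserving extension of $f_N$; the $\widetilde f_N$ increase and are uniformly norm bounded by $\Vert f\Vert$ (as $\Vert\widetilde f_N\Vert=\Vert f_N\Vert\leq\Vert f\Vert$), hence converge pointwise to a positive functional $\widetilde f$ on $A$ with $\widetilde f|_B=\sup_Nf_N=f$ and $\Vert\widetilde f\Vert=\Vert f\Vert$ (the bound $|\widetilde f(a)|=\lim_N|\widetilde f_N(a)|\leq\Vert f\Vert\,\Vert a\Vert$ gives $\Vert\widetilde f\Vert\leq\Vert f\Vert$, while $\widetilde f$ extends $f$). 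Finally $\widetilde f_N\leq\bigl(\sum_{n\leq N}\alpha_n2^n\bigr)\widetilde g$ for all $N$ and $\widetilde f=\sup_N\widetilde f_N$, so Definition \ref{absdef} gives $\widetilde f\ll\widetilde g$. In short, the whole argument reduces to part (1) together with the averaging trick for $g$; the remaining work is bookkeeping with norms of positive extensions.
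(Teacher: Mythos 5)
Your proposal is correct and follows essentially the same route as the paper's proof: extend the increments $f_{n+1}-f_n$ norm-preservingly and sum them to get an increasing sequence of norm-preserving extensions of the $f_n$, then collapse the $n$-dependent family of norm-preserving extensions of $g$ into a single one by a weighted average (the paper uses general weights $\delta_n$ with $\sum\delta_n=1$ where you use $2^{-n}$) and absorb the weights into the constants. The only cosmetic difference is that you dominate each increment extension $\widetilde{d_n}$ by a multiple of $\widetilde g$ and then sum, whereas the paper dominates the partial-sum extension $\widetilde{f_n}$ directly; both yield the required bounds.
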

\begin{proof}
	$\Leftarrow$:  If there are norm preserving extensions $\widetilde{f}$ and $\widetilde{g}$ such that $\widetilde{f}\leq\widetilde{g}$ or $\widetilde{f}\ll\widetilde{g}$, then from the definitions it is obvious that these relations hold for the restrictions $\widetilde{f}\!\mid\!_B=f$ and $\widetilde{g}\!\mid\!_B=g$. 
	
	$\Rightarrow$: \emph{(1)}:
	Suppose that $f\leq g$. This means that the functional $g-f$ is positive, $g=(g-f)+f$, so for arbitrary norm preserving extensions $\widetilde{f}$ and $\widetilde{g-f}$ to $A$, we have that the functional
	\[
	\widetilde{g}:=\widetilde{g-f}+\widetilde{f}
	\]
	is a positive extension of $g$. 
	Trivially $\widetilde{f}\leq\widetilde{g}$ holds, moreover $\Vert g\Vert=\Vert\widetilde{g}\Vert$, thanks to \eqref{normpres1}:
	\[
	\Vert\widetilde{g}\Vert=\Vert\widetilde{g-f}+\widetilde{f}\Vert=\Vert\widetilde{g-f}\Vert+\Vert\widetilde{f}\Vert=\Vert g-f\Vert+\Vert f\Vert=\Vert (g-f)+f\Vert=\Vert g\Vert.
	\]
	
	$\Rightarrow$: \emph{(2)}: Assume that $f\ll g$. By Definition \ref{absdef}, fix a sequence of positive numbers
	$(\alpha_n)_{n\in\mathbb{N}}$ and an increasing sequence $(f_n)_{n\in\mathbb{N}}$ of positive functionals on $B$ with the conditions  
	\[
	\sup_{n\in\mathbb{N}}f_n=f;\ f_n\leq\alpha_ng\ (n\in\mathbb{N}). 
	\]
	We show by induction that there is an increasing sequence $(\widetilde{f_n})_{n\in\mathbb{N}}$ of positive functionals on $A$ such that for any $n\in\mathbb{N}$ the functional $\widetilde{f_n}$ is a norm preserving extension of $f_n$. To see this, let
	$\widetilde{f_0}$ be an arbitrary norm preserving extension of $f_0$ to $A$, and let $\widetilde{f_{n+1}-f_n}$ be an arbitrary norm preserving extension of $f_{n+1}-f_n$ for any $n\in\mathbb{N}$. Now define for every natural number $n$ the positive functional $\widetilde{f_{n+1}}$ by the recursion
	\begin{equation}\label{normpres2}
	\widetilde{f_{n+1}}:=\widetilde{f_{n+1}-f_n}+\widetilde{f_n}.
	\end{equation}
	From this equation, it is obvious that $\widetilde{f_n}\leq\widetilde{f_{n+1}}$ is true for all $n$. By induction, it is also clear that these functionals are extensions: $\widetilde{f_0}(b)=f_0(b)$ for any $b\in B$, furthermore
	\[
	\widetilde{f_{n+1}}(b)=\widetilde{f_{n+1}-f_n}(b)+\widetilde{f_n}(b)=(f_{n+1}-f_n)(b)+f_n(b)=f_{n+1}(b).
	\]
	The equalities for the norms also can be proved with induction: for $n=0$ we have chosen $\widetilde{f_0}$ to be norm preserving. Now, if we suppose that $\Vert\widetilde{f_n}\Vert=\Vert f_n\Vert$ for an $n\in\mathbb{N}$, then by \eqref{normpres1} and \eqref{normpres2}, 
	\[ 
	\Vert\widetilde{f_{n+1}}\Vert=\Vert\widetilde{f_{n+1}-f_n}\Vert+\Vert\widetilde{f_n}\Vert=\Vert f_{n+1}-f_n\Vert+\Vert f_n\Vert=\Vert f_{n+1}\Vert.
	\]
	Since for any $a\in A$
	\[
	\lvert\widetilde{f_n}(a)\rvert\leq\Vert f_n\Vert\Vert a\Vert\leq\Vert f\Vert \Vert a\Vert
	\]
	is true, the supremum functional $\widetilde{f}:=\sup_{n\in\mathbb{N}}\widetilde{f_n}$ exists. 
The previous estimate also shows that $\Vert \widetilde{f}\Vert\leq\Vert f\Vert$. But $\widetilde{f}$ is an extension of $f$ (which is then norm preserving from the inequality), because for all $b\in B$ 
	\[
	f(b)=\lim_{n\to+\infty}f_n(b)=\lim_{n\to+\infty}\widetilde{f_n}(b)=\widetilde{f}(b).
	\] 
	
	Now we show the existence of a norm preserving extension $\widetilde{g}$ of $g$ and a sequence $(\widetilde{\alpha_n})_{n\in\mathbb{N}}$ of positive numbers with the property
	\begin{equation}\label{egyen}
	\widetilde{f_n}\leq\widetilde{\alpha_n}\widetilde{g}\ (n\in\mathbb{N}).
	\end{equation}
	Together with the properties of the increasing sequence $(\widetilde{f_n})_{n\in\mathbb{N}}$, this leads to the conclusion $\widetilde{f}\ll\widetilde{g}$.

	For every $n\in\mathbb{N}$, choose $\widetilde{\alpha_ng-f_n}$ to be an arbitrary norm preserving extension of the positive functional $\alpha_ng-f_n$.
	Let
	\[
	\widetilde{g_n}:=\frac{1}{\alpha_n}\left(\widetilde{\alpha_ng-f_n}+\widetilde{f_n}\right).
	\]
It is easy to see that, for all $n\in\mathbb{N}$, $\widetilde{g_n}$ is an extension of $g$: 
\[
\widetilde{g_n}(b)=\frac{1}{\alpha_n}\left(\widetilde{\alpha_ng-f_n}+\widetilde{f_n}\right)(b)=\frac{1}{\alpha_n}\left((\alpha_ng-f_n)(b)+f_n(b)\right)=g(b)\ \ \ (b\in B).
\] 
Furthermore, by \eqref{normpres1}, the equation $\Vert\widetilde{g_n}\Vert=\Vert g\Vert$ holds:
\[
\Vert\widetilde{g_n}\Vert=\frac{1}{\alpha_n}\left\Vert\widetilde{\alpha_ng-f_n}+\widetilde{f_n}\right\Vert=\frac{1}{\alpha_n}\left\Vert(\alpha_ng-f_n)+f_n \right\Vert=\Vert g\Vert
\]

	Take a sequence $(\delta_m)_{m\in\mathbb{N}}$ of positive numbers with sum $1$. Define the functional $\widetilde{g}$ by the equation
	\[
	\widetilde{g}:=\sum_{m=0}^{+\infty}\delta_m\widetilde{g_m}.
	\] 
	The sum converges in the functional-norm and this is a norm preserving extension of $g$. 
Indeed,
	\[
	\Vert\widetilde{g}\Vert\leq\sum_{m=0}^{+\infty}\delta_m\Vert \widetilde{g_m}\Vert=\sum_{m=0}^{+\infty}\delta_m\Vert g\Vert=\Vert g\Vert,
	\]
	moreover for every $b\in B$ we infer that  
	\[
	\widetilde{g}(b)=\sum_{m=0}^{+\infty}\delta_m\widetilde{g_m}(b)=\sum_{m=0}^{+\infty}\delta_mg(b)=g(b).
	\]
	Lastly, for $n\in\mathbb{N}$, let 
	\[
	\widetilde{\alpha_n}:=\frac{\alpha_n}{\delta_n}.
	\]
	With these numbers
	\[
	\widetilde{\alpha_n}\widetilde{g}=\frac{\alpha_n}{\delta_n}\sum_{m=0}^{+\infty}\delta_m\widetilde{g_m}\geq \frac{\alpha_n}{\delta_n}\delta_n\widetilde{g_n}=\alpha_n\frac{1}{\alpha_n}\left(\widetilde{\alpha_ng-f_n}+\widetilde{f_n}\right)\geq\widetilde{f_n},
	\]
	that is, \eqref{egyen} holds. The proof is complete.
\end{proof}

Let $A$ be a $C^*$-algebra and denote by $A_+$ the set of the positive elements in $A$.
A $C^*$-subalgebra $B$ of $A$ is \emph{hereditary} (\cite{Pedersen}, 1.5; \cite{Bla2}, II.3.4), if whenever $a\in A_+$ and $b\in B_+$ are such that $a\leq b$, it follows that $a\in B$. The standard examples of hereditary $C^*$-subalgebras are the closed ideals of $A$ and the \emph{corner $^*$-subalgebras} $eAe$ with a projection $e\in A$.

In Proposition 3.1.6 of \cite{Pedersen}, it was shown that every positive functional defined on a closed hereditary $^*$-subalgebra has a \emph{unique} norm preserving extension to $A$. In fact, this property characterizes hereditary $C^*$-subalgebras (this is a theorem of M. Kusuda (\cite{Ku})).

The following result plays an important role in the proof of Lemma \ref{finitelemma}.

\begin{corollary}\label{heredit}
	Let $B$ be a closed hereditary $^*$-subalgebra of the $C^*$-algebra $A$. Let $f,g:B\to\mathbb{C}$ be positive functionals, and denote by $\widetilde{f}$ (resp. $\widetilde{g}$) the unique positive norm preserving extension of $f$ (resp. $g$) to $A$.
	Then the following equivalences hold. 
	\begin{enumerate}
		\item[(1)] $f\leq g$ $\Leftrightarrow$ $\widetilde{f}\leq\widetilde{g}$.
		
		\item[(2)] $f\ll g$ $\Leftrightarrow$ $\widetilde{f}\ll\widetilde{g}$.
		
	\end{enumerate}
\end{corollary}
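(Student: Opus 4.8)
The plan is to deduce Corollary \ref{heredit} immediately from Lemma \ref{closedsubext} by invoking the \emph{uniqueness} of the norm preserving extension from a closed hereditary $^*$-subalgebra (Proposition 3.1.6 in \cite{Pedersen}, and the converse characterization of M. Kusuda \cite{Ku}). The point is that Lemma \ref{closedsubext} already gives the equivalences ``$f\leq g$ (resp. $f\ll g$) iff \emph{there exist} norm preserving extensions satisfying $\widetilde f\leq\widetilde g$ (resp. $\widetilde f\ll\widetilde g$)''; over a hereditary $^*$-subalgebra ``there exist'' collapses to ``the unique ones do'', which is exactly the assertion of the corollary.

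Concretely, the implications ``$\Leftarrow$'' in both (1) and (2) are the easy ones: if $\widetilde f\leq\widetilde g$ (resp. $\widetilde f\ll\widetilde g$) on $A$, then restricting to $B$ gives $f=\widetilde f\!\mid\!_B\leq\widetilde g\!\mid\!_B=g$, and $f\ll g$ follows from the first bullet of Remark \ref{altul} (or directly from the ``$\Leftarrow$'' part of Lemma \ref{closedsubext}). For ``$\Rightarrow$'': assume $f\leq g$ (resp. $f\ll g$). By Lemma \ref{closedsubext}(1) (resp. (2)) there are norm preserving extensions $\widetilde f_0$ of $f$ and $\widetilde g_0$ of $g$ to $A$ with $\widetilde f_0\leq\widetilde g_0$ (resp. $\widetilde f_0\ll\widetilde g_0$). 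Since $B$ is hereditary, the norm preserving extension of a positive functional on $B$ is unique, so $\widetilde f_0=\widetilde f$ and $\widetilde g_0=\widetilde g$; hence $\widetilde f\leq\widetilde g$ (resp. $\widetilde f\ll\widetilde g$), as claimed.

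The only thing that needs a moment of care, rather than a genuine obstacle, is checking that the extensions manufactured inside the proof of Lemma \ref{closedsubext}(2) are really norm preserving — the functional $\widetilde f=\sup_{n}\widetilde{f_n}$ was shown there to satisfy $\Vert\widetilde f\Vert\leq\Vert f\Vert$ while extending $f$ (hence $\Vert\widetilde f\Vert=\Vert f\Vert$), and similarly $\widetilde g=\sum_{m}\delta_m\widetilde{g_m}$ is norm preserving — so that the uniqueness theorem is legitimately applicable and forces these to coincide with the prescribed extensions $\widetilde f$ and $\widetilde g$. With that observation in place the proof is just this two-line reduction; all of the substance sits in Lemma \ref{closedsubext} and in the uniqueness of norm preserving extensions from hereditary $C^*$-subalgebras.
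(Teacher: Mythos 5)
Your proposal is correct and is exactly the argument the paper intends: the paper's proof of this corollary is the one-line remark that it is immediate from Lemma \ref{closedsubext}, the point being precisely that over a closed hereditary $^*$-subalgebra the norm preserving extension is unique (Proposition 3.1.6 in \cite{Pedersen}), so the extensions produced by the lemma must coincide with $\widetilde{f}$ and $\widetilde{g}$. Your additional check that the extensions constructed in the proof of Lemma \ref{closedsubext} are indeed norm preserving is a sensible verification, but it is already established inside that proof, so nothing further is needed.
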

\begin{proof}
	These statements are immediate from Lemma \ref{closedsubext}.
\end{proof}

\begin{remark}\label{refher}
	We note here that there are similar characterizations for singularity via the extensions. Namely, for positive functionals $f$ and $g$ defined on a closed $^*$-subalgebra $B$ of a $C^*$-algebra $A$, $f\perp g$ is equivalent to the relation $\widetilde{f}\perp\widetilde{g}$ for \emph{every} norm preserving extensions of $f$ and $g$ to $A$. If $B$ is hereditary, then the latter property reduces to the pair of the unique norm preserving extensions $\widetilde{f}$ and $\widetilde{g}$. 
	
	Moreover, in the case of hereditary $B$, one can show the following correspondence between the Lebesgue decompositions (Theorem \ref{Lebesgue}) of the original and the extended functionals:
	\[
	\widetilde{f_r}=(\widetilde{f})_r;\ \widetilde{f_s}=(\widetilde{f})_s.
	\]
	Neither of these statements are required henceforth, so we omit the proofs.     
\end{remark}

As we mentioned before, we need the notion of the enveloping von Neumann algebra of a $C^*$-algebra. 
Every property and idea we introduce here can be found in the standard textbooks (for example: \cite{Bla2}, \cite{Dixmier}, \cite{KR2}, \cite{Palmer2}, \cite{Pedersen}). 

Let $A$ be a $C^*$-algebra. Let $\Upsilon:A\to\mathscr{B}(\mathcal{H}_{\Upsilon})$ be the universal representation of $A$, that is, $\Upsilon$ is the orthogonal direct sum of all of the representations $\pi_f$ obtained by the GNS-construction (\cite{Bla2}, II.6.4; \cite{Palmer2}, section 9.4), where $f$ runs through the set of all positive functionals on $A$ (\cite{Bla2}, III.5.2; \cite{Palmer2}, Definition 10.1.5). 
It is known that $\Upsilon$ is a non-degenerate and faithful representation of $A$ on the Hilbert space $\mathcal{H}_{\Upsilon}$ (this is the orthogonal direct sum of the Hilbert spaces $\mathcal{H}_f$).
Let $W^*(A):=(\Upsilon\left<A\right>)''$, i.e., the bicommutant of $\Upsilon$. Then $W^*(A)$ is a von Neumann algebra, what we call \emph{the enveloping von Neumann algebra of $A$}.
(The notations $A''$ and $A^{**}$ are also common (\cite{Pedersen}, 3.7.6, \cite{Bla2}, III.5.2); the latter refers to the fact that the enveloping von Neumann algebra $W^*(A)$ and the second Banach dual of $A$ are isomorphic as Banach spaces.)
Following the usual terminology (indentifying $A$ with $\Upsilon\left<A\right>$), we regard $A$ as a norm closed $^*$-subalgebra of $W^*(A)$.

Every positive functional $f$ on $A$ can be uniquely extended to a normal positive functional $\mathfrak{f}$ on the enveloping von Neumann algebra $W^*(A)$ . 
The extension mapping $f\mapsto\mathfrak{f}$ is an isometric bijection onto the set of normal positive functionals, which is order preserving, that is,
\begin{equation}\label{fedoegyen}
f\leq g\ \Leftrightarrow\ \mathfrak{f}\leq \mathfrak{g}.
\end{equation}
See, for example, 12.1.3 in \cite{Dixmier}.

Now we can continue gathering the facts that support our proof of the uniqueness. 
The next stament shows that absolute continuity between positive functionals also holds true between the normal extensions: 

\begin{lemma}\label{abszlem}
	Let $A$ be a $C^*$-algebra and let $W^*(A)$ be its enveloping von Neumann algebra. With the preceding notations, we have the equivalence 
	\[
	f\ll g\ \Leftrightarrow\ \mathfrak{f}\ll \mathfrak{g}
	\]
	for positive functionals $f$ and $g$ on $A$.
\end{lemma}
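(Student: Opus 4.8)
The plan is to prove the equivalence by transporting the witnessing sequences of Definition \ref{absdef} back and forth between $A$ and $W^*(A)$. Three ingredients from the preliminaries will do all the work: (a) the order-preserving isometric bijection $f\mapsto\mathfrak{f}$ between positive functionals on $A$ and normal positive functionals on $W^*(A)$, in particular the equivalence \eqref{fedoegyen}; (b) the fact that a positive functional on a von Neumann algebra dominated by a normal one is itself normal, and that non-negative multiples of normal functionals are normal; (c) the fact that an increasing sequence of (representable) positive functionals bounded above by a representable positive functional converges pointwise to a representable positive functional, which is by definition its supremum. Note also that every positive functional on the $C^*$-algebra $W^*(A)$ is automatically representable, so Definition \ref{absdef} applies there verbatim.

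For the implication $\Leftarrow$, I would start from $\mathfrak{f}\ll\mathfrak{g}$ and fix positive functionals $(\mathfrak{f}_n)_{n\in\mathbb{N}}$ on $W^*(A)$ and positive reals $(\alpha_n)_{n\in\mathbb{N}}$ with $\mathfrak{f}_n\le\mathfrak{f}_{n+1}$, $\mathfrak{f}_n\le\alpha_n\mathfrak{g}$, $\sup_n\mathfrak{f}_n=\mathfrak{f}$. Restricting everything to $A$, the functionals $f_n:=\mathfrak{f}_n\!\mid\!_A$ are positive, satisfy $f_n\le f_{n+1}$ and $f_n\le\alpha_n g$ (restriction is order-preserving), and since $\mathfrak{f}_n\le\mathfrak{f}$ they are bounded by $f$, so $\sup_n f_n$ exists by (c). Because $\mathfrak{f}_n\to\mathfrak{f}$ pointwise on all of $W^*(A)$, in particular $f_n(a)=\mathfrak{f}_n(a)\to\mathfrak{f}(a)=f(a)$ for every $a\in A$, hence $\sup_n f_n=f$. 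Then $(f_n)$ and $(\alpha_n)$ witness $f\ll g$.

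For the implication $\Rightarrow$, I would take $(f_n)_{n\in\mathbb{N}}$, $(\alpha_n)_{n\in\mathbb{N}}$ as in Definition \ref{absdef} for $f\ll g$ and let $\mathfrak{f}_n$ be the normal extension of $f_n$. Since $\alpha_n\mathfrak{g}$ is the unique normal extension of $\alpha_n g$, equivalence \eqref{fedoegyen} yields $\mathfrak{f}_n\le\mathfrak{f}_{n+1}$ and $\mathfrak{f}_n\le\alpha_n\mathfrak{g}$; from $f_n\le f$ (the increasing sequence has pointwise supremum $f$) and \eqref{fedoegyen} again, $\mathfrak{f}_n\le\mathfrak{f}$. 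So $(\mathfrak{f}_n)$ is increasing and bounded by the normal functional $\mathfrak{f}$, its pointwise supremum $\mathfrak{h}:=\sup_n\mathfrak{f}_n$ exists by (c), and $\mathfrak{h}\le\mathfrak{f}$ forces $\mathfrak{h}$ to be normal by (b). On $A$ we get $\mathfrak{h}(a)=\lim_n\mathfrak{f}_n(a)=\lim_n f_n(a)=f(a)$, so $\mathfrak{h}$ is a normal extension of $f$; by uniqueness of the normal extension $\mathfrak{h}=\mathfrak{f}$, i.e.\ $\sup_n\mathfrak{f}_n=\mathfrak{f}$, and $(\mathfrak{f}_n)$, $(\alpha_n)$ witness $\mathfrak{f}\ll\mathfrak{g}$.

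The only subtle point — and the closest thing to an obstacle — is making sure that the supremum of the transported sequence is exactly $\mathfrak{f}$ (resp.\ $f$) and not merely some functional it dominates. In the $\Leftarrow$ direction this is free, since the restrictions already converge pointwise to $\mathfrak{f}\!\mid\!_A=f$ on all of $A$; in the $\Rightarrow$ direction it is precisely where uniqueness of the normal extension, combined with the normality of functionals dominated by $\mathfrak{f}$, is used. Everything else is routine order-theoretic bookkeeping with the bijection $f\mapsto\mathfrak{f}$.
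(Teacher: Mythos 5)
Your proof is correct and follows essentially the same route as the paper: the $\Leftarrow$ direction is just the observation that absolute continuity passes to restrictions (the paper cites Remark \ref{altul} for this), and the $\Rightarrow$ direction lifts the witnessing sequence via the order-preserving normal-extension bijection and identifies its supremum with $\mathfrak{f}$ by normality of functionals dominated by $\mathfrak{f}$ together with uniqueness of the normal extension. The ``subtle point'' you flag is exactly the one the paper's proof addresses, and in the same way.
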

\begin{proof}
	Since absolute continuity passes down to restrictions (Remark \ref{altul}), $f\ll g$ follows from $\mathfrak{f}\ll \mathfrak{g}$.
	
	For the converse, assume that $f$ is absolutely continuous with respect to $g$. 
	Then, by Definition \ref{absdef}, there exist a sequence of positive numbers
	$(\alpha_n)_{n\in\mathbb{N}}$ and an increasing sequence $(f_n)_{n\in\mathbb{N}}$ of positive functionals on $A$ with the properties
	\[
	\sup_{n\in\mathbb{N}}f_n=f;\ f_n\leq\alpha_ng\ (n\in\mathbb{N}). 
	\]
	Therefore, the order preserving property \eqref{fedoegyen} implies for any $n\in\mathbb{N}$ that 
	\[
	\mathfrak{f_n}\leq\mathfrak{f_{n+1}}\leq\mathfrak{f};\ \mathfrak{f_n}\leq\alpha_n\mathfrak{g}.
	\]  
	Hence, for $\mathfrak{f}\ll \mathfrak{g}$, it is enough to show that $\sup_{n\in\mathbb{N}}\mathfrak{f_n}=\mathfrak{f}$.
	The previous inequalities imply that $\sup_{n\in\mathbb{N}}\mathfrak{f_n}\leq\mathfrak{f}$, so $\sup_{n\in\mathbb{N}}\mathfrak{f_n}$ is a normal positive functional on $W^*(A)$, since $\mathfrak{f}$ is normal. But, by the equality $\sup_{n\in\mathbb{N}}f_n=f$, it is an extension of $f$. Thus, the bijection between the positive functionals on $A$ and the normal extensions to $W^*(A)$ proves the equality $\sup_{n\in\mathbb{N}}\mathfrak{f_n}=\mathfrak{f}$.
\end{proof}

\begin{remark}\label{messe}
Similar to the case of hereditary subalgebras (Remark \ref{refher}), the unique normal extensions to $W^*(A)$ preserve singularity, that is, $f\perp g$ $\Leftrightarrow$ $\mathfrak{f}\perp\mathfrak{g}$. 
	Furthermore, the normal extension preserves the regular and singular parts in the Lebesgue decompositions (Theorem \ref{Lebesgue}) with respect to $g$ and $\mathfrak{g}$: 
	\[
	\mathfrak{f_r}=(\mathfrak{f})_r;\ \mathfrak{f_s}=(\mathfrak{f})_s.
	\]
	Since these properties are not relevant in this paper, we omit their proofs.      
\end{remark}

The following analogue of  Lemma \ref{uniqlem1} deals with the uniqueness of the Lebesgue decomposition of normal positive functionals on a von Neumann algebra. That is, we say for a von Neumann algebra $M$ that the Lebesgue decomposition of normal positive functionals over $M$ is \emph{unique}, if
for every pair of normal positive functionals $\mathfrak{f}$ and $\mathfrak{g}$ on $M$ the decomposition of $\mathfrak{f}$ with respect to $\mathfrak{g}$ is unique.
We note that the uniqueness for normal positive functionals does not imply the uniqueness for positive functionals over $M$ in general (see (1) in Question \ref{Q1}).  

\begin{lemma}\label{uniqlem2}
	Let $M$ be a von Neumann algebra.
	Then the following statements are equivalent.
	\begin{itemize}
		\item[(i)] The Lebesgue decomposition of normal positive functionals over $M$ is unique. 
		
		\item[(ii)] For all normal positive functionals $\mathfrak{t}, \mathfrak{f}$ and $\mathfrak{g}$ on $M$, the property
		\[
		\mathfrak{t}\leq \mathfrak{f}\ll \mathfrak{g}
		\]
		implies that $\mathfrak{t}\ll \mathfrak{g}$.
	\end{itemize}
\end{lemma}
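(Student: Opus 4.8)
The plan is to mirror, essentially line by line, the reasoning behind Lemma~\ref{uniqlem1}(i)$\Leftrightarrow$(ii) (together with Remark~\ref{nemuniqrem}), now carried out inside the von Neumann algebra $M$ with the word ``representable'' replaced by ``normal'' throughout. Two structural observations make this transfer legitimate. Since $M$ is a $C^*$-algebra, every positive functional on $M$ is representable, so Theorem~\ref{Lebesgue} applies to any pair of normal positive functionals $\mathfrak{f},\mathfrak{g}$ on $M$; and since a positive functional dominated by a normal one is normal and sums of normal positive functionals are normal, the regular part $\mathfrak{f}_r$ and singular part $\mathfrak{f}_s$ of a normal $\mathfrak{f}$ with respect to $\mathfrak{g}$ (both $\leq\mathfrak{f}$) are again normal, as are all the auxiliary functionals the argument produces. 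The one thing to keep track of throughout is precisely this: that every functional fed into (i), (ii) or Theorem~\ref{Lebesgue} genuinely lies in the normal class.

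For (i)$\Rightarrow$(ii) I would argue as in Remark~\ref{nemuniqrem}. Given normal $\mathfrak{t}\leq\mathfrak{f}\ll\mathfrak{g}$, first form the Lebesgue decomposition $\mathfrak{t}=\mathfrak{t}_r+\mathfrak{t}_s$ of $\mathfrak{t}$ with respect to $\mathfrak{g}$, so $\mathfrak{t}_s\perp\mathfrak{g}$ and $\mathfrak{t}_s\leq\mathfrak{t}\leq\mathfrak{f}$. Then consider $\mathfrak{h}:=\mathfrak{f}+\mathfrak{t}_s$, a normal positive functional: the pair $(\mathfrak{f},\mathfrak{t}_s)$ is a decomposition of $\mathfrak{h}$ of the kind occurring in Definition~\ref{defunique} (since $\mathfrak{f}\ll\mathfrak{g}$ and $\mathfrak{t}_s\perp\mathfrak{g}$), so by (i) it must coincide with the Lebesgue decomposition of $\mathfrak{h}$ with respect to $\mathfrak{g}$, and Theorem~\ref{Lebesgue} then yields $\mathfrak{f}\perp\mathfrak{t}_s$. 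But $\mathfrak{t}_s\leq\mathfrak{f}$ gives $\mathfrak{t}_s\ll\mathfrak{f}$ by \eqref{WDtul}, so Remark~\ref{altul} forces $\mathfrak{t}_s=0$, i.e. $\mathfrak{t}=\mathfrak{t}_r\ll\mathfrak{g}$.

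For (ii)$\Rightarrow$(i) I would run the maximality argument. Let $\mathfrak{f}=\mathfrak{f}_r+\mathfrak{f}_s$ be the Lebesgue decomposition of a normal $\mathfrak{f}$ with respect to a normal $\mathfrak{g}$, and suppose $\mathfrak{f}=\mathfrak{f}_r'+\mathfrak{f}_s'$ with $\mathfrak{f}_r'\ll\mathfrak{g}$ and $\mathfrak{f}_s'\perp\mathfrak{g}$ (both parts automatically normal, being $\leq\mathfrak{f}$). Since $\mathfrak{f}_r'\leq\mathfrak{f}$ and $\mathfrak{f}_r'\ll\mathfrak{g}$, the maximality of $\mathfrak{f}_r$ in Theorem~\ref{Lebesgue} gives $\mathfrak{f}_r'\leq\mathfrak{f}_r$; set $\mathfrak{q}:=\mathfrak{f}_r-\mathfrak{f}_r'=\mathfrak{f}_s'-\mathfrak{f}_s\geq0$. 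From $\mathfrak{q}\leq\mathfrak{f}_r\ll\mathfrak{g}$ and (ii) I obtain $\mathfrak{q}\ll\mathfrak{g}$, while from $\mathfrak{q}\leq\mathfrak{f}_s'$ and $\mathfrak{f}_s'\perp\mathfrak{g}$ the defining property of singularity (Definition~\ref{singdef}) gives $\mathfrak{q}\perp\mathfrak{g}$; hence $\mathfrak{q}=0$ by Remark~\ref{altul}, that is, $\mathfrak{f}_r'=\mathfrak{f}_r$ and $\mathfrak{f}_s'=\mathfrak{f}_s$, so the decomposition is unique.

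I do not anticipate a genuine obstacle: the statement is the normal-functional mirror of Lemma~\ref{uniqlem1}, and the only (routine) care required is the bookkeeping mentioned above — checking that $\mathfrak{t}_s$, $\mathfrak{h}=\mathfrak{f}+\mathfrak{t}_s$ and $\mathfrak{q}$ are normal, hence admissible as test functionals in the normal versions of Definition~\ref{defunique}, of condition (ii), and of Theorem~\ref{Lebesgue}.
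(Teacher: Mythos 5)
Your proof is correct and follows exactly the route the paper intends: the paper's ``proof'' of Lemma~\ref{uniqlem2} is simply the instruction to repeat Lemma 5.2 of \cite{SzAbs} (i.e.\ the argument behind Lemma~\ref{uniqlem1} and Remark~\ref{nemuniqrem}) with ``representable'' replaced by ``normal'', and that is precisely what you carry out, including the only point that genuinely needs checking, namely that the auxiliary functionals $\mathfrak{t}_s$, $\mathfrak{h}$ and $\mathfrak{q}$ are normal because they are dominated by (or sums of) normal ones. No gaps.
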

\begin{proof}
	The proof is the same as in Lemma 5.2 of \cite{SzAbs} (just substitute the property "representable" with "normal"). 
\end{proof}

We have arrived to another important result:

\begin{corollary}\label{fedoWun}
	Let $A$ be a $C^*$-algebra with enveloping von Neumann algebra $W^*(A)$. The next statements are equivalent.
	\begin{itemize}
		\item[(i)] The Lebesgue decomposition of positive functionals over $A$ is unique.
		\item[(ii)] The Lebesgue decomposition of normal positive functionals over $W^*(A)$ is u\-ni\-qu\-e.
	\end{itemize}
\end{corollary}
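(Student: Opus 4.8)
The plan is to reduce both conditions to the ``transitivity of $\ll$ under domination'' criteria supplied by Lemmas \ref{uniqlem1} and \ref{uniqlem2}, and then transport that property across the canonical bijection $f\mapsto\mathfrak{f}$ between the positive functionals on $A$ and the normal positive functionals on $W^*(A)$. Recall that on a $C^*$-algebra every positive functional is representable, so in Lemma \ref{uniqlem1} one may read ``positive functional'' for ``representable positive functional'' throughout. The two structural facts I would lean on are: (a) $f\mapsto\mathfrak{f}$ is an order isomorphism \emph{onto} the set of normal positive functionals of $W^*(A)$, that is, \eqref{fedoegyen}; and (b) it preserves absolute continuity in both directions, that is, Lemma \ref{abszlem}.

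For (i)$\Rightarrow$(ii) I would verify condition (ii) of Lemma \ref{uniqlem2}. Given normal positive functionals with $\mathfrak{t}\leq\mathfrak{f}\ll\mathfrak{g}$ on $W^*(A)$, surjectivity of the extension map produces positive functionals $t,f,g$ on $A$ whose normal extensions are $\mathfrak{t},\mathfrak{f},\mathfrak{g}$. By \eqref{fedoegyen} we get $t\leq f$, and by Lemma \ref{abszlem} we get $f\ll g$; hence Lemma \ref{uniqlem1}(ii) (applicable because (i) holds) yields $t\ll g$, and applying Lemma \ref{abszlem} once more gives $\mathfrak{t}\ll\mathfrak{g}$. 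So Lemma \ref{uniqlem2} delivers the uniqueness over $W^*(A)$.

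For (ii)$\Rightarrow$(i) I would symmetrically verify condition (ii) of Lemma \ref{uniqlem1}. Starting from positive functionals with $t\leq f\ll g$ on $A$, pass to the normal extensions $\mathfrak{t},\mathfrak{f},\mathfrak{g}$; then $\mathfrak{t}\leq\mathfrak{f}$ by \eqref{fedoegyen} and $\mathfrak{f}\ll\mathfrak{g}$ by Lemma \ref{abszlem}, so the hypothesis together with Lemma \ref{uniqlem2}(ii) gives $\mathfrak{t}\ll\mathfrak{g}$, and Lemma \ref{abszlem} brings this back down to $t\ll g$. Lemma \ref{uniqlem1} then yields uniqueness over $A$.

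I do not expect a genuine obstacle: once Lemmas \ref{uniqlem1}, \ref{uniqlem2} and \ref{abszlem} are in hand, the argument is essentially bookkeeping across the extension map. The only points that deserve care are that $f\mapsto\mathfrak{f}$ is onto the \emph{whole} set of normal positive functionals (so that an arbitrary triple on the von Neumann side genuinely comes from $A$) and that the adjective ``representable'' may be dropped on the $C^*$-side; both are standard and already recorded in Section \ref{prelims}.
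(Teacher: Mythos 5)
Your proposal is correct and follows essentially the same route as the paper: both reduce the two uniqueness statements to the transitivity criteria of Lemmas \ref{uniqlem1}(ii) and \ref{uniqlem2}(ii) and then transport the condition $t\leq f\ll g\Rightarrow t\ll g$ across the extension bijection using \eqref{fedoegyen} and Lemma \ref{abszlem}. The points you flag for care (surjectivity onto all normal positive functionals, and that every positive functional on a $C^*$-algebra is representable) are exactly the ingredients the paper relies on implicitly.
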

\begin{proof}
	According to Lemma \ref{uniqlem1} (ii), the uniqueness over $A$ is equivalent to the hypothesis that
	for any positive functionals $t, f$ and $g$ on $A$, the condition
	\[
	t\leq f\ll g
	\]
	implies that $t\ll g$. By the connections between the positive functionals on $A$ and the normal positive functionals on $W^*(A)$ ( \eqref{fedoegyen} and Lemma \ref{abszlem}), this is equivalent to the property that for all normal positive functionals $\mathfrak{t}, \mathfrak{f}$ and $\mathfrak{g}$ on $W^*(A)$, the condition
	\[
	\mathfrak{t}\leq \mathfrak{f}\ll \mathfrak{g}
	\]
	implies that $\mathfrak{t}\ll \mathfrak{g}$.
	But Lemma \ref{uniqlem2} shows that this property holds exactly when the Lebesgue decomposition of normal positive functionals over $W^*(A)$ is u\-ni\-qu\-e.
\end{proof}

The uniqueness over $C^*$-algebras highly depends on the following Theorem \ref{Kosaki} of Kosaki on $\sigma$-finite von Neumann algebras (\cite{Ko}: Corollaries 2.3 and 2.4., there is no need to assume that $\mathfrak{g}$ is a state). Moreover, even the uniqueness for the decomposition of normal positive functionals over finite von Neumann algebras follows from this theorem. (Corollary \ref{normaluniq} below; this fact is also new, since it has not been pointed out in Kosaki's paper \cite{Ko}). 

We recall here that a von Neumann algebra $M$ is \emph{finite}, if for every $x\in M$, the equation $x^*x=\mathbf{1}$ implies $xx^*=\mathbf{1}$. It is well known that if $M$ is a finite von Neumann algebra, then for any projection $e$ in $M$ the corner von Neumann algebra $eMe$ is finite as well (\cite{Bla2}, \cite{DvN}, \cite{Sa}, \cite{Ta1}).

\begin{theorem}\label{Kosaki}
	For a von Neumann algebra $M$ with a faithful normal positive functional $\mathfrak{g}$ on $M$, the following are equivalent.
	\begin{itemize}
		\item[(i)] $M$ is finite.
		\item[(ii)] Every normal positive functional on $M$ is absolutely continuous with respect to $\mathfrak{g}$.
	\end{itemize}
\end{theorem}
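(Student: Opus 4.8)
The plan is to prove the two implications separately. The forward direction $\mathrm{(i)}\Rightarrow\mathrm{(ii)}$ is, in essence, Kosaki's theorem, and its engine is noncommutative integration for finite von Neumann algebras. Since $\mathfrak{g}$ is faithful and normal, $M$ is $\sigma$-finite; combined with finiteness, $M$ then carries a faithful normal tracial state $\tau$ (take Dixmier's center-valued trace and compose it with a faithful normal state on the $\sigma$-finite abelian center; see \cite{Dixmier}, \cite{Ta1}). Writing normal positive functionals as densities against $\tau$, we get positive operators $h,k$ affiliated with $M$ with $\tau(h),\tau(k)<\infty$ and $\mathfrak{f}=\tau(h\,\cdot\,)$, $\mathfrak{g}=\tau(k\,\cdot\,)$; here $k$ is injective because $\mathfrak{g}$ is faithful. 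The key object is the relative density $b:=k^{-1/2}hk^{-1/2}$, a positive self-adjoint operator affiliated with $M$: this is legitimate in the calculus of $\tau$-measurable operators precisely because $k$ has full support, and one has $k^{1/2}bk^{1/2}=h$. Put $b_n:=b\,\chi_{[0,n]}(b)\in M$, so $0\le b_n\le n\mathbf{1}$, and define $\mathfrak{f}_n(x):=\tau\!\left(k^{1/2}b_nk^{1/2}x\right)$.

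Each $\mathfrak{f}_n$ is a normal positive functional (the element $k^{1/2}b_nk^{1/2}$ is positive and $\tau$-integrable). For $x\ge 0$, writing $W:=k^{1/2}xk^{1/2}\ge 0$ and using the trace property, $\mathfrak{f}_n(x)=\tau\!\left(W^{1/2}b_nW^{1/2}\right)\le n\,\tau(W)=n\mathfrak{g}(x)$, so $\mathfrak{f}_n\le n\mathfrak{g}$; the sequence increases because $b_{n+1}-b_n\ge 0$; and $\sup_n\mathfrak{f}_n=\mathfrak{f}$ because $\tau$ is normal, $b_n\uparrow b$, and $\sup_n\tau(b_nW)=\tau(bW)$ is the defining property of the extended trace, while $\tau\!\left(k^{1/2}bk^{1/2}x\right)=\tau(hx)=\mathfrak{f}(x)$. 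By Definition \ref{absdef}, $\mathfrak{f}\ll\mathfrak{g}$. One cannot shortcut this by first showing $\mathfrak{f}\ll\tau$ and $\tau\ll\mathfrak{g}$: the relation $\ll$ is \emph{not} transitive in general (if it were, uniqueness would be automatic, contradicting Theorem \ref{infinite}), so one genuinely has to exhibit a witnessing sequence for $\mathfrak{f}\ll\mathfrak{g}$ directly — that is exactly what the relative density $b$ provides. The only delicate point is making this measurable-operator calculus (existence of $b$, the factorization $k^{1/2}bk^{1/2}=h$, the extended-trace limit) fully rigorous; this is the technical heart of the direction.

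For $\mathrm{(ii)}\Rightarrow\mathrm{(i)}$ I argue the contrapositive and reduce to $\mathscr{B}(\mathcal{H})$. If $M$ is not finite then, by the very definition, there is an isometry $v\in M$ ($v^*v=\mathbf{1}$) with $vv^*<\mathbf{1}$. Put $p_1:=\mathbf{1}-vv^*\neq 0$ and $p_{j+1}:=vp_jv^*$; then $(p_j)_{j\ge 1}$ are mutually orthogonal, mutually equivalent nonzero projections, and the system $e_{ij}:=v^{\,i-1}p_1v^{*\,(j-1)}$ is a set of matrix units spanning a copy of $B:=\mathscr{B}(\ell^2)$ inside $M$ (with unit $f:=\sum_j p_j\le\mathbf{1}$). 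Spatially, $B$ acts as $\mathscr{B}(\mathcal{H}_0)\,\overline{\otimes}\,\mathbf{1}_{\mathcal{K}}$ on a subspace $\mathcal{H}_0\otimes\mathcal{K}$, with $\dim\mathcal{H}_0=\infty$, and $\mathfrak{g}|_B$ is a faithful normal positive functional on $B\cong\mathscr{B}(\mathcal{H}_0)$. It therefore suffices to produce a nonzero normal positive functional on $B$ that is not $\ll\mathfrak{g}|_B$ and then lift it to $M$.

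This last step mirrors the proof of Theorem \ref{infinite}, with Kadison transitivity available for free inside $\mathscr{B}(\mathcal{H}_0)$. Diagonalize the density of $\mathfrak{g}|_B$: $\mathfrak{g}|_B=\sum_k\mu_k\,\omega_{\xi_k}$ for an orthonormal basis $(\xi_k)$ of $\mathcal{H}_0$, with $\mu_k>0$ and $\sum_k\mu_k<\infty$, so $\mu_k\to 0$. Set $\sigma_k:=\sup_{j>k}\mu_j$ (nonincreasing, $\to 0$), $\beta_k:=\left(\sqrt{\sigma_{k-1}}-\sqrt{\sigma_k}\right)^{1/2}$, and $\xi:=\sum_k\beta_k\xi_k\in\mathcal{H}_0$; then $\|\xi\|^2=\sqrt{\sigma_0}>0$ and $\|\xi_n'\|^2=\sqrt{\sigma_n}$, where $\xi_n':=\sum_{k>n}\beta_k\xi_k$. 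Let $p:=\omega_\xi|_B$, a pure (hence nonzero) normal state of $B$, and consider the rank-one operators $a_n:=\|\xi_n'\|^{-2}\,|\xi\rangle\langle\xi_n'|\in B$. A direct computation gives $a_n\xi=\xi$, hence $p(a_n^*a_n)=\|\xi\|^2$ for all $n$, while $\mathfrak{g}|_B(a_n^*a_n)=\|\xi\|^{-2}\,\sigma_n^{-1}\sum_{k>n}\mu_k\beta_k^2\le\sqrt{\sigma_0}\,\sqrt{\sigma_n}\to 0$. Thus any $\lambda\ge 0$ with $\lambda p\le\mathfrak{g}|_B$ must vanish, and pureness of $p$ yields $p\perp\mathfrak{g}|_B$; since $p\neq 0$, Remark \ref{altul} gives $p\not\ll\mathfrak{g}|_B$. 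Finally, with $\hat{\xi}:=\xi\otimes e$ for a unit vector $e\in\mathcal{K}$, the vector functional $\hat{p}:=\omega_{\hat{\xi}}|_M$ is normal, positive, nonzero, and $\hat{p}|_B=p$; were $\hat{p}\ll\mathfrak{g}$ on $M$, restriction to the $^*$-subalgebra $B$ (Remark \ref{altul}) would force $p=\hat{p}|_B\ll\mathfrak{g}|_B$, a contradiction. Hence $\mathrm{(ii)}$ fails, completing the contrapositive. The main obstacles are, in $\mathrm{(i)}\Rightarrow\mathrm{(ii)}$, the noncommutative $L^1$-machinery (the relative density and the extended trace), and in $\mathrm{(ii)}\Rightarrow\mathrm{(i)}$, the comparison-theory bookkeeping that embeds $\mathscr{B}(\ell^2)$ into $M$ and the choice of $(\beta_k)$ making $p(a_n^*a_n)$ bounded away from $0$ while $\mathfrak{g}|_B(a_n^*a_n)\to 0$.
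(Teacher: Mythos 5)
This statement is not proved in the paper at all: it is imported from Kosaki's article (\cite{Ko}, Corollaries 2.3 and 2.4), so there is no in-paper argument to compare against. Your proposal is, as far as I can check, a correct self-contained proof. The direction (i)$\Rightarrow$(ii) follows what is essentially Kosaki's own route: pass from the faithful normal functional $\mathfrak{g}$ to a faithful normal tracial state $\tau$ (finiteness plus $\sigma$-finiteness), represent $\mathfrak{f}=\tau(h\,\cdot)$ and $\mathfrak{g}=\tau(k\,\cdot)$, and cut the relative density $b=k^{-1/2}hk^{-1/2}$ by its spectral projections to manufacture the witnessing sequence $\mathfrak{f}_n\leq n\mathfrak{g}$ required by Definition \ref{absdef}. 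Your remark that one cannot shortcut via transitivity of $\ll$ is well taken. The measurable-operator manipulations are legitimate here precisely because in a finite algebra with a finite trace every affiliated operator is $\tau$-measurable and $k$ has full support; these are standard but, as you say, they are the technical content of the direction. The direction (ii)$\Rightarrow$(i) is an attractive adaptation of the paper's own Theorem \ref{infinite}: you embed a copy of $\mathscr{B}(\ell^2)$ via the matrix units generated by a non-unitary isometry, diagonalize the density of $\mathfrak{g}$ there, and replace the fixed weights $2^{-k}$, $10^{-k}$ of Theorem \ref{infinite} by the weights $\beta_k=(\sqrt{\sigma_{k-1}}-\sqrt{\sigma_k})^{1/2}$ tuned to the eigenvalue sequence $(\mu_k)$; the computation with the rank-one operators $a_n$ (which replaces the appeal to Kadison transitivity) correctly gives $p(a_n^*a_n)$ constant while $\mathfrak{g}|_B(a_n^*a_n)\to0$. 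Two harmless slips: the prefactor in $\mathfrak{g}|_B(a_n^*a_n)$ should be $\Vert\xi\Vert^{2}\Vert\xi_n'\Vert^{-4}=\sqrt{\sigma_0}\,\sigma_n^{-1}$ rather than $\Vert\xi\Vert^{-2}\sigma_n^{-1}$ (the final bound $\sqrt{\sigma_0}\sqrt{\sigma_n}$ is then exactly what comes out), and $p=\omega_\xi|_B$ is a positive multiple of a pure state rather than a state, which does not affect the singularity argument. What your two-sided argument buys, beyond the citation, is a uniform presentation in the language of Definitions \ref{absdef} and \ref{singdef} used throughout the paper, with the converse direction visibly parallel to the non-uniqueness construction of Theorem \ref{infinite}.
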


For further properties, we have to record here some significant observations on the support projections and the norm preserving extensions of normal positive functionals (\cite{Bla2}, page 254;
\cite{DvN}, page 63;  \cite{Ta1}, page 134).

Let $M$ be a von Neumann algebra. If $\mathfrak{g}$ is a normal positive functional on $M$, then the left kernel 
\[
L_{\mathfrak{g}}=\{a\in M \!\mid\!\mathfrak{g}(a^*a)=0\}
\] 
is a left ideal generated by a projection $e_{\mathfrak{g}}\in M$, i. e., $L_{\mathfrak{g}}=Me_{\mathfrak{g}}$.
By definition, the \emph{support projection} of $\mathfrak{g}$ is $s(\mathfrak{g})=\mathbf{1}-e_{\mathfrak{g}}$. 
For every $x\in M$, we have
\begin{equation}\label{faith}
\mathfrak{g}(x)=\mathfrak{g}(s(\mathfrak{g})x)=\mathfrak{g}(xs(\mathfrak{g}))=\mathfrak{g}(s(\mathfrak{g})xs(\mathfrak{g})),
\end{equation}
moreover $\mathfrak{g}\!\mid\!_{s(\mathfrak{g})Ms(\mathfrak{g})}$ is a faithful normal positive functional on the corner von Neumann algebra $s(\mathfrak{g})Ms(\mathfrak{g})$.
In particular, since the unit element of the latter von Neumann algebra is $s(\mathfrak{g})$ and positive functionals on unital $C^*$-algebras attain their norm at the unit,
the equation \eqref{faith} for $x=\mathbf{1}$ gives
\[
\Vert \mathfrak{g}\Vert=\mathfrak{g}(\mathbf{1})=\mathfrak{g}(s(\mathfrak{g}))=\mathfrak{g}\!\mid\!_{s(\mathfrak{g})Ms(\mathfrak{g})}(s(\mathfrak{g}))=\Vert\mathfrak{g}\!\mid\!_{s(\mathfrak{g})Ms(\mathfrak{g})}\Vert.
\]
Together with the remarks about hereditary $^*$-subalgebras preceding to Corollary \ref{heredit}, this norm equality actually shows that the unique norm preserving positive extension of the positive functional $\mathfrak{g}\!\mid\!_{s(\mathfrak{g})Ms(\mathfrak{g})}$ from the norm closed hereditary $^*$-subalgebra $s(\mathfrak{g})Ms(\mathfrak{g})$ to $M$ is $\mathfrak{g}$.

Suppose that $\mathfrak{f}$ is another normal positive functional on $M$. 
Then, by Proposition 5 on page 65 in \cite{DvN}, the inclusion $L_{\mathfrak{g}}\subseteq L_{\mathfrak{f}}$ is equivalent to the inequality $s(\mathfrak{f})\leq s(\mathfrak{g})$ between the supports. Since $s(\mathfrak{g})\leq\mathbf{1}$, the positivity of $\mathfrak{f}$ implies that
\[
\mathfrak{f}(s(\mathfrak{f}))\leq\mathfrak{f}(s(\mathfrak{g}))\leq \mathfrak{f}(\mathbf{1})=\Vert\mathfrak{f}\Vert.
\]
But the argument presented in the previous paragraph shows that $\mathfrak{f}(s(\mathfrak{f}))=\mathfrak{f}(\mathbf{1})$. Hence, 
\[
\Vert\mathfrak{f}\Vert=\mathfrak{f}(s(\mathfrak{g}))=\Vert\mathfrak{f}\!\mid\!_{s(\mathfrak{g})Ms(\mathfrak{g})}\Vert,
\]
that is, $\mathfrak{f}$ is the unique norm preserving positive extension of $\mathfrak{f}\!\mid\!_{s(\mathfrak{g})Ms(\mathfrak{g})}$ to $M$.

The next statement is the key to the uniqueness over $C^*$-algebras. It is a consequence of the preceding remarks, Kosaki's theorem and the result dealing with extensions from hereditary subalgebras (Corollary \ref{heredit}). 

\begin{lemma}\label{finitelemma}
	Let $M$ be a finite von Neumann algebra and let $\mathfrak{f},\mathfrak{g}:M\to\mathbb{C}$ be normal positive functionals.
	Then the inclusion $L_{\mathfrak{g}}\subseteq L_{\mathfrak{f}}$ for the left kernels implies that $\mathfrak{f}$ is absolutely continuous with respect to $\mathfrak{g}$.
\end{lemma}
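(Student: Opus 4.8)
The plan is to reduce the statement to Kosaki's Theorem~\ref{Kosaki} by cutting $M$ down by the support projection of $\mathfrak{g}$, and then to lift the resulting absolute continuity back to $M$ via the hereditary-extension correspondence of Corollary~\ref{heredit}. The point of cutting down is that Kosaki's theorem needs a \emph{faithful} reference functional, which $\mathfrak{g}$ itself need not be on all of $M$, whereas $\mathfrak{g}$ does become faithful on the corner $s(\mathfrak{g})Ms(\mathfrak{g})$; and the hypothesis $L_{\mathfrak{g}}\subseteq L_{\mathfrak{f}}$ is exactly what guarantees that passing to this corner loses no information about either functional.

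Concretely, first I would put $e:=s(\mathfrak{g})$. By the support-projection facts recorded just before the lemma, the hypothesis $L_{\mathfrak{g}}\subseteq L_{\mathfrak{f}}$ is equivalent to $s(\mathfrak{f})\leq e$, and consequently both $\mathfrak{f}$ and $\mathfrak{g}$ are precisely the unique norm preserving positive extensions to $M$ of their restrictions to the corner $^*$-subalgebra $eMe$. Now $eMe$ is a von Neumann algebra with unit $e$; it is finite because $M$ is finite; and $\mathfrak{g}\!\mid\!_{eMe}$ is a faithful normal positive functional on $eMe$. Moreover $\mathfrak{f}\!\mid\!_{eMe}$ is again normal positive on $eMe$: it is the composition of $\mathfrak{f}$ with the inclusion $eMe\hookrightarrow M$, and that inclusion sends the supremum in $eMe$ of a bounded increasing net in $(eMe)_+$ to the supremum of the same net in $M$ (the $M$-supremum $a$ satisfies $a=eae$ by strong continuity of $x\mapsto exe$, hence lies in $eMe$ and is also the $eMe$-supremum), so it is normal.

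Then I would apply Theorem~\ref{Kosaki} to the finite von Neumann algebra $eMe$ with the faithful normal positive functional $\mathfrak{g}\!\mid\!_{eMe}$: every normal positive functional on $eMe$ is absolutely continuous with respect to $\mathfrak{g}\!\mid\!_{eMe}$, and in particular $\mathfrak{f}\!\mid\!_{eMe}\ll\mathfrak{g}\!\mid\!_{eMe}$. Finally, since $eMe$ is a closed hereditary $^*$-subalgebra of the $C^*$-algebra $M$ and $\mathfrak{f}$, $\mathfrak{g}$ are the unique norm preserving extensions to $M$ of $\mathfrak{f}\!\mid\!_{eMe}$, $\mathfrak{g}\!\mid\!_{eMe}$ respectively, the implication ($\Rightarrow$) in part~(2) of Corollary~\ref{heredit} gives $\mathfrak{f}\ll\mathfrak{g}$, which is the claim.

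The only non-formal decision is the choice to work inside the corner $eMe$ with $e=s(\mathfrak{g})$; everything else — normality of $\mathfrak{f}\!\mid\!_{eMe}$, finiteness of $eMe$, and the transport along the hereditary extension — is routine given the results already assembled. So I do not expect a genuine obstacle here, only the need to invoke the right three ingredients in the right order.
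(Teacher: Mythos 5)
Your proposal is correct and follows essentially the same route as the paper: cut down to the corner $s(\mathfrak{g})Ms(\mathfrak{g})$, apply Kosaki's Theorem~\ref{Kosaki} there (the corner being finite with $\mathfrak{g}$ faithful on it), and transport back via the unique norm preserving extensions and Corollary~\ref{heredit}~(2). The extra verification of normality of $\mathfrak{f}\!\mid\!_{s(\mathfrak{g})Ms(\mathfrak{g})}$ is a detail the paper leaves implicit, and your argument for it is sound.
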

\begin{proof}
	The finiteness of $M$ implies that the von Neumann algebra $s(\mathfrak{g})Ms(\mathfrak{g})$ is finite too, which admits the faithful normal positive functional $\mathfrak{g}\!\mid\!_{s(\mathfrak{g})Ms(\mathfrak{g})}$. Hence, by Kosaki's Theorem \ref{Kosaki}, we conclude
	\[
	\mathfrak{f}\!\mid\!_{s(\mathfrak{g})Ms(\mathfrak{g})}\ll\mathfrak{g}\!\mid\!_{s(\mathfrak{g})Ms(\mathfrak{g})}.
	\]
	From the remarks above related to the unique norm preserving extensions of these functionals to $M$, Corollary \ref{heredit} \emph{(2)} guarantees that $\mathfrak{f}$ is absolutely continuous with respect to $\mathfrak{g}$.
\end{proof}

\begin{remark}\label{clos}
	In the context of Hilbert space operators, the preceding lem\-ma says the following. 
	For a positive functional $\mathfrak{f}$ on $M$, let $\mathcal{H}_{\mathfrak{f}}$ be the Hilbert space associated to $\mathfrak{f}$ by the GNS-construction, that is, $\mathcal{H}_{\mathfrak{f}}$ is the completion of the pre-Hilbert space $M/L_{\mathfrak{f}}$ endowed with inner the product 
	\[
	(\cdot\!\mid\!\cdot)_{\mathfrak{f}}:M/L_{\mathfrak{f}}\times M/L_{\mathfrak{f}}\to\mathbb{C};\ (a+L_{\mathfrak{f}}\!\mid\!b+L_{\mathfrak{f}})_{\mathfrak{f}}:=\mathfrak{f}(b^*a).
	\]
	Then the statement of the lemma is that if 
	\[
	\mathcal{H}_{\mathfrak{g}}\ni a+L_{\mathfrak{g}}\mapsto a+L_{\mathfrak{f}}\in\mathcal{H}_{\mathfrak{f}}    
	\]
	is a well-defined mapping, then it is automatically (a densely defined) closable operator for normal positive functionals $\mathfrak{f}$ and $\mathfrak{g}$ on a finite von Neumann algebra $M$.
\end{remark}

From the previous facts, we immediately get the uniqueness for the decomposition of normal positive functionals over finite von Neumann algebras. 
In Question \ref{Q1}, we examine the relationship of the normal-uniqueness and the $C^*$-algebra-uniqueness over von Neumann algebras closer.

\begin{corollary}\label{normaluniq}
	On finite von Neumann algebras the Lebesgue decomposition of normal positive functionals is unique.
\end{corollary}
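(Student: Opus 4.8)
The plan is to reduce the statement to the criterion in part (ii) of Lemma~\ref{uniqlem2} and then apply Lemma~\ref{finitelemma}. Concretely, I would fix a finite von Neumann algebra $M$ together with normal positive functionals $\mathfrak{t},\mathfrak{f},\mathfrak{g}$ on $M$ satisfying $\mathfrak{t}\leq\mathfrak{f}\ll\mathfrak{g}$, and the goal becomes to show $\mathfrak{t}\ll\mathfrak{g}$; by Lemma~\ref{uniqlem2} this is exactly the assertion that the Lebesgue decomposition of normal positive functionals over $M$ is unique.

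The first step is to pass to left kernels. From $\mathfrak{t}\leq\mathfrak{f}$ and positivity, $0\leq\mathfrak{t}(a^*a)\leq\mathfrak{f}(a^*a)$ for every $a\in M$, hence $L_{\mathfrak{f}}\subseteq L_{\mathfrak{t}}$. From $\mathfrak{f}\ll\mathfrak{g}$ one reads off $L_{\mathfrak{g}}\subseteq L_{\mathfrak{f}}$ by unwinding Definition~\ref{absdef}: choosing a sequence $(f_n)$ with $f_n\leq\alpha_n\mathfrak{g}$ and $\mathfrak{f}=\sup_n f_n$, if $\mathfrak{g}(a^*a)=0$ then $f_n(a^*a)\leq\alpha_n\mathfrak{g}(a^*a)=0$ for all $n$, so $\mathfrak{f}(a^*a)=\sup_n f_n(a^*a)=0$ — this is the implication \eqref{WDtul} of Remark~\ref{altul}. (Note that since $\mathfrak{g}$ is normal, any representable functional dominated by a multiple of $\mathfrak{g}$ is automatically normal, so the notion of $\ll$ for normal functionals appearing in Lemma~\ref{uniqlem2} coincides with the one used here.) Chaining the two inclusions gives $L_{\mathfrak{g}}\subseteq L_{\mathfrak{t}}$.

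The finiteness of $M$ now does the real work: Lemma~\ref{finitelemma} states precisely that for normal positive functionals on a finite von Neumann algebra the inclusion $L_{\mathfrak{g}}\subseteq L_{\mathfrak{t}}$ forces $\mathfrak{t}\ll\mathfrak{g}$. This verifies condition (ii) of Lemma~\ref{uniqlem2}, and the uniqueness of the Lebesgue decomposition of normal positive functionals over $M$ follows.

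I do not anticipate any genuine obstacle here: once Lemma~\ref{finitelemma} is available, the corollary is essentially a one-line deduction via Lemma~\ref{uniqlem2}. The only point meriting a second glance is the harmless bookkeeping in the middle paragraph — that $\mathfrak{f}\ll\mathfrak{g}$ still entails $L_{\mathfrak{g}}\subseteq L_{\mathfrak{f}}$ in the normal setting — and this is immediate from the defining approximating sequence.
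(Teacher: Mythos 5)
Your argument is correct and is exactly the paper's proof: verify condition (ii) of Lemma~\ref{uniqlem2}, derive $L_{\mathfrak{g}}\subseteq L_{\mathfrak{f}}\subseteq L_{\mathfrak{t}}$ from \eqref{WDtul}, and conclude $\mathfrak{t}\ll\mathfrak{g}$ via Lemma~\ref{finitelemma}. No changes needed.
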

\begin{proof}
	Let $M$ be a finite von Neumann algebra. We have to check property (ii) in Lemma \ref{uniqlem2}. 
	Take normal positive functionals $\mathfrak{t}, \mathfrak{f}$ and $\mathfrak{g}$ on $M$ with the hypothesis
	\[
	\mathfrak{t}\leq \mathfrak{f}\ll \mathfrak{g}.
	\]
	We need $\mathfrak{t}\ll \mathfrak{g}$.
	From the assumptions and \eqref{WDtul} in section \ref{prelims}, we get the following inclusions for the left kernels:
	\[
	L_{\mathfrak{g}}\subseteq L_{\mathfrak{f}}\subseteq L_{\mathfrak{t}}.
	\] 
	Thus, the previous Lemma \ref{finitelemma} implies $\mathfrak{t}\ll \mathfrak{g}$.
\end{proof}

We need the following characterization of finite enveloping von Neumann algebras from Hamana's paper (\cite{Ham}, Lemma 5):

\begin{theorem}\label{Hamana}
	Let $A$ be a $C^*$-algebra. 
	Then every irreducible representation of $A$ is finite dimensional if and only if $W^*(A)$ is a finite von Neumann algebra. 
\end{theorem}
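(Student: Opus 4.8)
The plan is to prove the two implications separately; the reverse one is routine, and the forward one carries essentially all the difficulty.

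For ``$W^*(A)$ finite $\Rightarrow$ every topologically irreducible representation of $A$ is finite dimensional'': given an irreducible $\pi:A\to\mathscr{B}(\mathcal{H})$ one has $\pi\langle A\rangle''=\mathscr{B}(\mathcal{H})$, and $\pi$ extends to a normal $^*$-homomorphism $\widetilde{\pi}:W^*(A)\to\mathscr{B}(\mathcal{H})$; since the image of a normal $^*$-homomorphism is weak-$^*$-closed and $\widetilde{\pi}\langle W^*(A)\rangle$ is weak-$^*$-dense, $\widetilde{\pi}$ is surjective. Its kernel is a weak-$^*$-closed two-sided ideal, hence equals $(\mathbf{1}-z)W^*(A)$ for a central projection $z$, so $zW^*(A)\cong\mathscr{B}(\mathcal{H})$. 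A corner of a finite von Neumann algebra by a (central) projection is again finite, so $\mathscr{B}(\mathcal{H})$ is finite, which forces $\dim\mathcal{H}<\infty$.

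For the converse, I would argue as follows. Since each irreducible $\pi$ satisfies $\pi\langle A\rangle=\mathscr{B}(\mathcal{H}_\pi)=\mathscr{K}(\mathcal{H}_\pi)$, the algebra $A$ is liminal, hence type I, and therefore $W^*(A)$ is a type I von Neumann algebra. Now use that $\pi\mapsto\dim\mathcal{H}_\pi$ is lower semicontinuous on the spectrum $\widehat{A}$: being finite valued, each set $\widehat{A}_{\le n}:=\{\pi\in\widehat{A}:\dim\mathcal{H}_\pi\le n\}$ is closed in $\widehat{A}$ and $\widehat{A}=\bigcup_n\widehat{A}_{\le n}$. Passing to the corresponding ideals, there is a decreasing sequence $(J_n)$ of closed two-sided ideals with $\widehat{A/J_n}=\widehat{A}_{\le n}$ and $\bigcap_n J_n=\{0\}$, and each quotient $A/J_n$ is $n$-subhomogeneous, hence embeds into $M_n(C_n)$ for a commutative $C^*$-algebra $C_n$; consequently $W^*(A/J_n)$ embeds, as a unital $^*$-subalgebra of a corner, into the finite von Neumann algebra $M_n(C_n)^{**}=M_n\big(C_n^{**}\big)$, so $W^*(A/J_n)$ is finite. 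Under the canonical identification $W^*(A/J_n)\cong z_nW^*(A)$ with an increasing sequence $(z_n)$ of central projections (the quotient maps $A/J_{n+1}\to A/J_n$ forcing $z_n\le z_{n+1}$), everything reduces to the single assertion $\bigvee_n z_n=\mathbf{1}$: granting it, if $x^*x=\mathbf{1}$ in $W^*(A)$ then $(z_nx)^*(z_nx)=z_n$ is the unit of the finite algebra $z_nW^*(A)$, so $z_nxx^*=z_n$, hence $xx^*\ge z_n$ for every $n$ and thus $xx^*\ge\bigvee_n z_n=\mathbf{1}$; therefore $W^*(A)$ is finite.

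The main obstacle is precisely the equality $\bigvee_n z_n=\mathbf{1}$, equivalently the vanishing of the residual central projection $e_\infty:=\bigwedge_n c(J_n)$, the infimum of the central supports of the $J_n$ in $W^*(A)$ --- or, failing a direct proof, the finiteness of $e_\infty W^*(A)$. One checks easily that $\widetilde{\pi}(e_\infty)=0$ for every irreducible $\pi$ (because $\dim\mathcal{H}_\pi<\infty$ makes $\widetilde{\pi}(c(J_n))=0$ once $n\ge\dim\mathcal{H}_\pi$), so $e_\infty$ lies in the non-atomic part of $W^*(A)$; excluding a type $I_\infty$ summand there is the heart of the theorem, and this is where I expect to invoke the disintegration theory of type I $C^*$-algebras over $\widehat{A}$ --- equivalently, a transfinite refinement of the stratification above along a composition series of $A$ with subquotients of Hausdorff spectrum and locally bounded fibre dimensions --- which is essentially the content of Hamana's Lemma 5. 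An alternative packaging of the forward implication is to produce a separating family of normal tracial states on $W^*(A)$: for every finite dimensional irreducible $\pi$, the state $\mathrm{tr}_\pi\circ\pi$ is tracial and its unique normal extension to $W^*(A)$ remains tracial by separate weak-$^*$-continuity of multiplication; these functionals separate the atomic part of $W^*(A)$, and the remaining ones would again have to be built on the non-atomic part from the locally bounded fibre dimensions --- the same obstacle in different clothing.
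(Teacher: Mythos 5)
First, a point of comparison: the paper does not prove this statement at all --- Theorem~\ref{Hamana} \emph{is} Lemma 5 of Hamana's paper \cite{Ham} and is imported as a black box. So the relevant question is whether your argument is self-contained, and it is not. Your reverse implication is complete and correct: the normal extension $\widetilde{\pi}$ of an irreducible $\pi$ maps $W^*(A)$ onto $\pi\left<A\right>''=\mathscr{B}(\mathcal{H})$, its kernel is $(\mathbf{1}-z)W^*(A)$ for a central projection $z$, and $zW^*(A)\cong\mathscr{B}(\mathcal{H})$ inherits finiteness, forcing $\dim\mathcal{H}<\infty$. Likewise the stratification $\widehat{A}=\bigcup_n\widehat{A}_{\le n}$, the finiteness of each $W^*(A/J_n)\cong z_nW^*(A)$ via subhomogeneity, and the observation that finiteness of $W^*(A)$ would follow from $\bigvee_n z_n=\mathbf{1}$ are all sound.

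The gap is exactly where you locate it, but it is worth stressing that it is not a removable formality and that your proposed exit is circular. The implication ``$\bigcap_nJ_n=\{0\}\Rightarrow\bigwedge_nc(J_n)=0$'' is false for general decreasing sequences of closed ideals: take $A=\mathscr{C}([0,1];\mathbb{C})$ and let $J_n$ consist of the functions vanishing at the first $n$ points of an enumeration of $\mathbb{Q}\cap[0,1]$; then $\bigcap_nJ_n=\{0\}$, yet the normal state of $A^{**}$ attached to any non-atomic probability measure satisfies $\omega(c(J_n))=1$ for every $n$, so $\bigvee_nz_n\neq\mathbf{1}$. Hence proving $\bigvee_nz_n=\mathbf{1}$ for your particular $J_n$ (or, as you hedge, the finiteness of the residual summand $e_\infty W^*(A)$) requires genuine input about the dimension stratification --- in substance, that every cyclic representation of $A$ generates a finite von Neumann algebra, which for separable $A$ one can extract from disintegration over $\widehat{A}$ into finite type I factors, and which in general is precisely Hamana's argument. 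Writing that at this point you would ``invoke \dots essentially the content of Hamana's Lemma 5'' concedes the issue: that lemma \emph{is} the statement being proved, so the forward implication remains unestablished. Either cite \cite{Ham} outright, as the paper does, or supply the argument for the residual part.
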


We are in position to prove the $C^*$-algebra version of our main Theorem \ref{main}. 

\begin{theorem}\label{mainC}
	If $A$ is a $C^*$-algebra, then the following statements are equivalent.
	\begin{itemize}
		\item[(i)]The Lebesgue decomposition of positive functionals over $A$ is unique.
		\item[(ii)]Every irreducible representation of $A$ is finite dimensional.
	\end{itemize}
\end{theorem}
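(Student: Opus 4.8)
The plan is to assemble the statement directly from the machinery already established in this subsection, so the proof will be short. The two implications will be handled separately, and the non-trivial direction is the one that has actually been built up by the preceding lemmas.

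For the implication (ii)$\Rightarrow$(i), I would argue as follows. Assume that every irreducible representation of $A$ is finite dimensional. Then Hamana's characterization, Theorem \ref{Hamana}, tells us that the enveloping von Neumann algebra $W^*(A)$ is a \emph{finite} von Neumann algebra. By Corollary \ref{normaluniq}, the Lebesgue decomposition of normal positive functionals over any finite von Neumann algebra is unique, so in particular it is unique over $W^*(A)$. Finally, Corollary \ref{fedoWun} transfers this back: the uniqueness of the Lebesgue decomposition of normal positive functionals over $W^*(A)$ is equivalent to the uniqueness of the Lebesgue decomposition of positive functionals over $A$. Hence (i) holds.

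For the implication (i)$\Rightarrow$(ii), I would prove the contrapositive. Suppose $A$ admits an irreducible representation $\pi\colon A\to\mathscr{B}(\mathcal{H})$ with $\mathcal{H}$ infinite dimensional. Then Theorem \ref{infinite} directly produces positive functionals witnessing that the Lebesgue decomposition of positive functionals over $A$ is not unique, so (i) fails. This is the only place where the infinite dimensional constructions (Kadison's transitivity theorem and Lemma \ref{uniqlem1}(iii)) enter, and they have already been carried out.

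There is essentially no obstacle remaining at this stage: every ingredient — the non-uniqueness construction (Theorem \ref{infinite}), the reduction to the enveloping von Neumann algebra (Corollary \ref{fedoWun}), the uniqueness over finite von Neumann algebras (Corollary \ref{normaluniq}, resting on Kosaki's Theorem \ref{Kosaki} and Lemma \ref{finitelemma}), and Hamana's dimension criterion (Theorem \ref{Hamana}) — has been recorded above, so the proof is just the bookkeeping of chaining these equivalences and the one contrapositive together. If I wanted to spell it out as a single chain it would read: (i) $\Leftrightarrow$ (normal positive functionals over $W^*(A)$ have unique Lebesgue decomposition) and, via Theorem \ref{Hamana} together with Corollary \ref{normaluniq} in one direction and Theorem \ref{infinite} in the other, this is equivalent to the finiteness of $W^*(A)$, which is equivalent to (ii).
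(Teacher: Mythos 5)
Your proposal is correct and follows exactly the same route as the paper: the implication (i)$\Rightarrow$(ii) is the contrapositive supplied by Theorem \ref{infinite}, and (ii)$\Rightarrow$(i) chains Theorem \ref{Hamana}, Corollary \ref{normaluniq} and Corollary \ref{fedoWun} in precisely the order the paper uses. Nothing is missing.
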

\begin{proof}
	(i)$\Rightarrow$(ii): This implication was proved in Theorem \ref{infinite}.
	
	(ii)$\Rightarrow$(i):
	If every irreducible representation of $A$ is finite dimensional, then by Hamana's Theorem \ref{Hamana}, the enveloping von Neumann algebra $W^*(A)$ of $A$ is finite. Hence, it follows from Corollary \ref{normaluniq} that the Lebesgue decomposition of normal positive functionals over $W^*(A)$ is unique. But Corollary \ref{fedoWun} asserts that this occurs exactly when the Lebesgue decomposition of positive functionals over $A$ is unique.
\end{proof}

In the terms of enveloping von Neumann algebras, the preceding theorem can be formulated into

\begin{corollary}\label{fedouniqe}
	Let $A$ be a $C^*$-algebra with enveloping von Neumann algebra $W^*(A)$.
	The following assertions are equivalent.
	\begin{itemize}
		\item[(i)]The Lebesgue decomposition of normal positive functionals over $W^*(A)$ is u\-ni\-qu\-e.
		\item[(ii)]$W^*(A)$ is a finite von Neumann algebra.
	\end{itemize}
\end{corollary}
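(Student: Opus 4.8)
The plan is to obtain this corollary by assembling equivalences that are already in place, so essentially no new argument is needed; I would prove the two implications separately.

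For (ii)$\Rightarrow$(i): assume $W^*(A)$ is a finite von Neumann algebra. Then Corollary \ref{normaluniq} applies verbatim with $M:=W^*(A)$ and yields that the Lebesgue decomposition of normal positive functionals over $W^*(A)$ is unique, which is precisely statement (i). Note that this direction does not even require the correspondence between positive functionals on $A$ and normal positive functionals on $W^*(A)$.

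For (i)$\Rightarrow$(ii): I would argue by contraposition. Suppose $W^*(A)$ is not finite. By Hamana's Theorem \ref{Hamana}, the failure of finiteness of $W^*(A)$ forces the existence of an infinite dimensional irreducible representation $\pi:A\to\mathscr{B}(\mathcal{H})$ of $A$. Theorem \ref{infinite} then shows that the Lebesgue decomposition of positive functionals over $A$ is not unique. Finally, Corollary \ref{fedoWun} transfers this failure to the enveloping von Neumann algebra: since uniqueness over $A$ is equivalent to uniqueness of the decomposition of normal positive functionals over $W^*(A)$, the latter must fail as well, i.e.\ (i) is false. Equivalently, one may read the whole corollary off at once as the composition of equivalences: (i) $\Leftrightarrow$ [the Lebesgue decomposition of positive functionals over $A$ is unique] by Corollary \ref{fedoWun}, $\Leftrightarrow$ [every irreducible representation of $A$ is finite dimensional] by Theorem \ref{mainC}, $\Leftrightarrow$ [$W^*(A)$ is finite] by Theorem \ref{Hamana}, $\Leftrightarrow$ (ii).

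Since every link in this chain is already established, there is no genuine obstacle; the only minor point worth stating explicitly is that Corollary \ref{normaluniq} covers the implication (ii)$\Rightarrow$(i) directly, so for that half one can bypass the detour through Theorem \ref{mainC} and Hamana's characterization.
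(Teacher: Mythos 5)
Your proposal is correct and matches the paper's proof, which likewise reads the corollary off as the chain of equivalences Corollary \ref{fedoWun} $\circ$ Theorem \ref{mainC} $\circ$ Theorem \ref{Hamana}. The observation that (ii)$\Rightarrow$(i) follows directly from Corollary \ref{normaluniq} without the detour is a valid (and slightly more economical) remark, but not a genuinely different route, since that corollary is exactly what powers Theorem \ref{mainC} in that direction anyway.
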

\begin{proof}
	Corollary \ref{fedoWun}, Theorems \ref{mainC} and \ref{Hamana} show the equivalence.
\end{proof}

Now we examine the connections between the uniqueness over a 
$C^*$-algebra $A$ and the uniqueness over closed $^*$-subalgebras of $A$. 
As a special case, we consider closed ideals of $A$ (cf. 1.5.3 in \cite{Pedersen}). 

It is well known and not hard to obtain that if a $C^*$-algebra $A$ has only finite dimensional irreducible representations, then every closed $^*$-subalgebra $B$ of $A$ possesses this property. Indeed, Proposition 4.1.8 in \cite{Pedersen} states that if $(\rho, \mathcal{K})$ is an irreducible representation of a $C^*$-subalgebra $B$, then there is an irreducible representation $(\pi,\mathcal{H})$ of $A$ with a closed subspace $\mathcal{H}_1\subseteq\mathcal{H}$ such that $(\rho,\mathcal{K})$ is unitarily equivalent to $(\pi\!\mid\!_B,\mathcal{H}_1)$. Thus, the dimension of $\mathcal{K}$ is finite. 
Therefore, an instant consequence of Theorem \ref{mainC} is

\begin{proposition}\label{subalgend}
	If $A$ is a $C^*$-algebra, then the Lebesgue-decomposition of positive functionals over $A$ is unique if and only if the decomposition is unique over every closed $^*$-subalgebra $B$ of $A$.
\end{proposition}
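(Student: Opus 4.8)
The plan is to push everything through the representation-theoretic criterion of Theorem \ref{mainC}, which already settles the uniqueness question for $C^*$-algebras.

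The implication "$\Leftarrow$" is immediate: if the Lebesgue decomposition of positive functionals is unique over \emph{every} closed $^*$-subalgebra of $A$, then it is unique over $A$, because $A$ is a closed $^*$-subalgebra of itself.

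For "$\Rightarrow$", suppose the Lebesgue decomposition of positive functionals over $A$ is unique. By the equivalence (i)$\Leftrightarrow$(ii) in Theorem \ref{mainC}, every irreducible representation of $A$ is finite dimensional. Let $B$ be an arbitrary closed $^*$-subalgebra of $A$; being norm-closed and $^*$-closed, $B$ is itself a $C^*$-algebra. As recalled in the paragraph preceding this proposition --- an application of Proposition 4.1.8 in \cite{Pedersen} --- any irreducible representation $(\rho,\mathcal{K})$ of $B$ is unitarily equivalent to $(\pi\!\mid\!_B,\mathcal{H}_1)$ for some irreducible representation $(\pi,\mathcal{H})$ of $A$ and some closed subspace $\mathcal{H}_1\subseteq\mathcal{H}$; in particular $\dim\mathcal{K}=\dim\mathcal{H}_1\leq\dim\mathcal{H}<+\infty$. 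Hence every irreducible representation of the $C^*$-algebra $B$ is finite dimensional, and a second application of Theorem \ref{mainC} --- this time to $B$ --- yields that the Lebesgue decomposition of positive functionals over $B$ is unique. This proves "$\Rightarrow$" and completes the argument.

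There is essentially no obstacle here: the genuine content is entirely contained in Theorem \ref{mainC}, and the only additional ingredient is the well-known stability of the property "all irreducible representations are finite dimensional" under passage to closed $^*$-subalgebras. The one point worth noting is that one transfers the \emph{representation-theoretic} property from $A$ to $B$ and only then invokes Theorem \ref{mainC} again; there is no direct route visible for inheriting uniqueness of the decomposition from $A$ to an arbitrary closed $^*$-subalgebra without detouring through the characterization.
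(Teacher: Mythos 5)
Your argument is correct and is exactly the paper's: the paper states the proposition as an "instant consequence" of Theorem \ref{mainC} combined with the observation (via Proposition 4.1.8 in \cite{Pedersen}) that the finite-dimensionality of all irreducible representations passes to closed $^*$-subalgebras, with the converse being trivial since $A$ is a closed $^*$-subalgebra of itself. Nothing to add.
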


Turning to closed ideals of $C^*$-algebras, we can make other interesting observations.
On the one hand, for a $C^*$-algebra $A$ with only finite dimensional irreducible representations it is true that every closed ideal $I$ of $A$ and the quotient $C^*$-algebra $A/I$ (cf. 1.5.5 in \cite{Pedersen}) have only finite dimensional irreducible representations (since such a representation on the latter algebra can be pulled back to an irreducible representation of $A$). On the other hand, if a $C^*$-algebra $A$ has a closed ideal $I$ such that $I$ and $A/I$ have only finite dimensional irreducible representations, then the dimensions of irreducible representations of $A$ are finite as well. Indeed, it is known that for an irreducible representation $\pi$ of $A$, the restriction $\pi\!\mid\!_I$ is zero or irreducible (\cite{Dixmier}, 2.11.3). Hence, if $\pi\!\mid\!_I\neq0$, then the finite dimensionality of this representation shows that the dimension of $\pi$ is finite too. If $\pi\!\mid\!_I=0$, then $\pi$ can be treated as a representation of $A/I$, thus, the finite dimensionality of $\pi$ also follows in this case. 
As a consequence, we get that every $C^*$-algebra contains a largest closed $^*$-ideal $I$ (possibly the zero ideal) such that all of the irreducible representations of $I$ are finite dimensional, and the quotient $C^*$-algebra $A/I$ has no closed ideals with only finite dimensional irreducible representations. (The ideal $I$ is just the sum of the closed $^*$-ideals with only finite dimensional irreducible representations.) 

According to our Theorem \ref{mainC}, these remarks can be presented in the context of the Lebesgue decomposition of positive linear functionals: for a $C^*$-algebra $A$, the decomposition over $A$ is unique if and only if there is a closed ideal $I$ of $A$ such that the decomposition is unique over $I$ and $A/I$. Furthermore, for any $C^*$-algebra $A$ there is a largest closed ideal $A_{LU}$ in $A$ such that the Lebesgue decomposition over $A_{LU}$ is unique, and in the $C^*$-algebra $A/A_{LU}$ the zero ideal is the only closed ideal with unique Lebesgue decomposition over it. 
In addition, the operation $A\mapsto A_{LU}$ has radical-like abilities similar to the cases of the so-called Leptin-radical (\cite{Palmer2}, 9.8.5 and 9.8.6) and Barnes-radical (\cite{Palmer2}, 10.5.21 and 10.5.23):
\begin{enumerate}
	\item[(1)] $(A/A_{LU})_{LU}=\{0\}$.
	
	\item[(2)] For any closed ideal $I\subseteq A$, $I_{LU}=I\cap A_{LU}$. 
In particular, $(A_{LU})_{LU}=A_{LU}$.
	
	\item[(3)] For every closed ideal $I\subseteq A$, if $(A/I)_{LU}=\{0\}$, then $I_{LU}=A_{LU}$.
	
	\item[(4)] $(A/I)_{LU}=A_{LU}/I$, if $I$ is a closed ideal in $A_{LU}$.
\end{enumerate} 
The proofs of these properties are fairly straightforward, so we omit the details.

Before proceeding to the general $^*$-algebra case, it is worth to ask some na\-tural questions.
\begin{question}\label{Q1}
Let $M$ be a von Neumann algebra.
\begin{itemize}
	\item[(1)] Is it true that if the Lebesgue decomposition of normal positive functionals over $M$ is unique, then the uniqueness holds in the case of all positive functionals over $M$?
	
	The answer is no in general. 
	If $M$ is a Type $II_1$ von Neumann algebra (e.g., \cite{Palmer2}, 9.3.27), then $M$ is finite, so the uniqueness over $M$ holds for normal positive functionals (Corollary \ref{normaluniq}). 
	However, regarding $M$ as a $C^*$-algebra, it has infinite dimensional irreducible representations, since every von Neumann algebra with only finite dimensional irreducible representations is of Type $I$. 
Hence, by Theorem \ref{mainC}, the uniqueness over $M$ is not true for all positive functionals.  

A more concrete finite Type $I$ example to this phenomena is the von Neumann algebra below, which is, in addition, an enveloping von Neumann algebra.
For a positive natural number $n$, let $(M_n(\mathbb{C}), \Vert\cdot\Vert_n)$ be the $C^*$-algebra of $n$ by $n$ matrices.
Let $A$ be the so-called $c_0$-direct sum of the $C^*$-algebra system $(M_n(\mathbb{C}))_{n\in\mathbb{N}^+}$, that is,
\[
A\cong\left\{(a_n)_{n\in\mathbb{N}^+}\left|a_n\in M_n(\mathbb{C}),\ \lim_{n\to+\infty}\Vert a_n\Vert_n=0 \right.\right\}.
\]
It is known that the enveloping von Neumann algebra of $A$ is $^*$-iso\-mor\-phic with the $\ell^{\infty}$-direct sum of the $C^*$-algebra system $(M_n(\mathbb{C}))_{n\in\mathbb{N}^+}$, that is,
\begin{equation*}\label{mproduct}
W^*(A)\cong\left\{(a_n)_{n\in\mathbb{N}^+}\left|a_n\in M_n(\mathbb{C}),\ \sup_{n\in\mathbb{N}}\Vert a_n\Vert_n<+\infty\right.\right\}.
\end{equation*}
On the one hand, $W^*(A)$ is a finite Type $I$ von Neumann algebra, so the uniqueness holds for normal positive functionals (Corollary \ref{normaluniq}).
On the other hand, as a $C^*$-algebra, $W^*(A)$ has infinite dimensional irreducible representations, since it is \emph{NGCR/antiliminal} by \cite{Kap}, Lemma 7.5.

	
\end{itemize}

We do not know the answer for the two following questions in general.

\begin{itemize}
	\item[(2)] Is the converse of Corollary \ref{normaluniq} true? That is, if the uniqueness holds for normal positive functionals over $M$, is it true that $M$ is finite? And for $\sigma$-finite $M$?
	
	\item[(3)] From the proof of Corollary \ref{normaluniq}, it can be seen that if the assumption  
	\begin{equation}\label{WDtul2}
	L_{\mathfrak{g}}\subseteq L_{\mathfrak{f}}\ \Rightarrow\ \mathfrak{f}\ll\mathfrak{g}
	\end{equation}
	holds for every normal positive functionals $\mathfrak{f}$ and $\mathfrak{g}$ on a von Neumann algebra, then the uniqueness comes true for the Lebesgue decomposition in the normal case. Does the uniqueness imply this condition? Furthermore, can the finiteness of the algebra be derived from the property that \eqref{WDtul2} holds for every pair of normal positive functionals? 
	(For the latter question the answer is positive in the case of $\sigma$-finite von Neumann algebras, since Kosaki's Theorem \ref{Kosaki} (ii)$\Rightarrow$(i) shows this.)
\end{itemize}
\end{question}

\subsection{The case of general $^*$-algebras}\label{nemGNS}
To conclude the solution of the general uniqueness problem by the $C^*$-algebra case, we recall the concept of a \emph{$C^*$-seminorm} on a $^*$-algebra $A$ (see 9.5 in \cite{Palmer2}). 
It is a seminorm $\sigma:A\to\mathbb{R}_+$ which satisfies the $C^*$-property: $\sigma(a^*a)=\sigma(a)^2$ for every $a\in A$. 
A remarkable theorem of Z. Sebestyén (\cite{Se}, or Theorem 9.5.14 in \cite{Palmer2}) shows that a $C^*$-seminorm $\sigma$ on $A$ is actually submultiplicative, i. e., $\sigma(ab)\leq\sigma(a)\sigma(b)$ for any $a,b\in A$. 
Moreover, the equation $\sigma(a^*)=\sigma(a)$ holds for every $a\in A$.
Note that every $^*$-algebra admits a $C^*$-seminorm (namely, the identically zero seminorm).

For a $C^*$-seminorm $\sigma$ on $A$, the \emph{kernel} of $\sigma$, that is, the $^*$-ideal
\[
\{a\in A|\sigma(a)=0\}
\]
is denoted by $\ker(\sigma)$. 
If $\ker(\sigma)=\{0\}$, i.e., $\sigma$ is a (possibly incomplete) norm, then we say
that $\sigma$ is a \emph{pre-$C^*$-norm} and $(A, \sigma)$ is a pre-$C^*$-algebra.

By Theorem 9.5.17 in \cite{Palmer2}, a positive linear functional $f:A\to\mathbb{C}$  is representable if and only if there is a $C^*$-seminorm $\sigma$ on $A$ such that $f$ is continuous with respect to $\sigma$. That is, there exists a $K\in\mathbb{R}_+$ such that
\[
\lvert f(a)\rvert\leq K\sigma(a)\ \ \ \ (\forall a\in A).
\]

The following observation is useful.

\begin{remark}\label{represent}
	Let $A$ be a $^*$-algebra and assume that $t,f,g:A\to\mathbb{C}$ are representable positive functionals on $A$.
	By the theorem mentioned above, there are $C^*$-seminorms $\sigma_t, \sigma_f$ and $\sigma_g$ on $A$ and there exist non-negative numbers $K_t, K_f, K_g$ such that the inequalities
	\[
	\lvert t(a)\lvert\leq K_t\sigma_t(a);\ \lvert f(a)\rvert\leq K_f\sigma_f(a);\ \lvert g(a)\rvert\leq K_g\sigma_g(a) 
	\]
	hold for any $a\in A$. With the number $K:=\max\{M_t,M_f,M_g\}$ and the $C^*$-seminorm $\sigma:=\max\{\sigma_t, \sigma_f, \sigma_g\}$, we have for every $a\in A$
	\[
	\lvert t(a)\rvert\leq K\sigma(a);\ \lvert f(a)\rvert\leq K\sigma(a);\ \lvert g(a)\rvert\leq K\sigma(a), 
	\]
	that is, the functionals are continuous with respect to $\sigma$.
\end{remark}

\begin{lemma}\label{faktoruniq}
	Let $A$ be a $^*$-algebra and let $I\subseteq A$ be a $^*$-ideal. 
	If the Lebesgue decomposition of representable positive functionals is unique over $A$, then the same is true over the quotient $^*$-algebra $A/I$.
\end{lemma}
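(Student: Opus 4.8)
The plan is to pull back the data from $A/I$ to $A$ via the quotient $^*$-homomorphism $q:A\to A/I$, use the uniqueness assumption there, and push the conclusion back down. The key observation is that composition with $q$ is compatible with all the structure involved: representable positive functionals, the order relation $\leq$, absolute continuity $\ll$, singularity $\perp$, and suprema of increasing bounded sequences. Concretely, if $h:A/I\to\mathbb{C}$ is a representable positive functional, then $h\circ q:A\to\mathbb{C}$ is a positive linear functional; it is representable because, by Theorem 9.5.17 in \cite{Palmer2}, there is a $C^*$-seminorm $\tau$ on $A/I$ with $\lvert h(x)\rvert\leq K\tau(x)$, and then $\sigma:=\tau\circ q$ is a $C^*$-seminorm on $A$ with $\lvert (h\circ q)(a)\rvert\leq K\sigma(a)$, so $h\circ q$ is continuous with respect to a $C^*$-seminorm, hence representable.

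The strategy is to verify property (iii) of Lemma \ref{uniqlem1} over $A/I$ and then invoke that lemma. So suppose $\mathsf{f},\mathsf{g},\mathsf{p}$ are representable positive functionals on $A/I$ with $\mathsf{p}\leq\mathsf{f}\ll\mathsf{g}$ and $\mathsf{p}\perp\mathsf{g}$; we must show $\mathsf{p}=0$. Set $f:=\mathsf{f}\circ q$, $g:=\mathsf{g}\circ q$, $p:=\mathsf{p}\circ q$, all representable positive functionals on $A$ by the previous paragraph. First, $\mathsf{p}\leq\mathsf{f}$ gives $p\leq f$ since $(f-p)(a^*a)=(\mathsf{f}-\mathsf{p})(q(a)^*q(a))\geq0$. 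Second, $\mathsf{f}\ll\mathsf{g}$ transfers: taking the witnessing sequences $(\mathsf{f}_n)$, $(\alpha_n)$ from Definition \ref{absdef} with $\mathsf{f}_n\leq\mathsf{f}_{n+1}$, $\mathsf{f}_n\leq\alpha_n\mathsf{g}$, $\sup_n\mathsf{f}_n=\mathsf{f}$, the functionals $f_n:=\mathsf{f}_n\circ q$ are representable positive, satisfy $f_n\leq f_{n+1}$ and $f_n\leq\alpha_n g$ (same pointwise argument as above), and $\sup_n f_n=f$ because $f_n(a)=\mathsf{f}_n(q(a))\to\mathsf{f}(q(a))=f(a)$ for every $a$, and the limit of an increasing sequence bounded by the representable functional $f$ is representable with that pointwise limit (\cite{SzAbs}, Remark 2.14). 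Hence $f\ll g$. Third, $\mathsf{p}\perp\mathsf{g}$ gives $p\perp g$: if $p'$ is a representable positive functional on $A$ with $p'\leq p$ and $p'\leq g$, then $p'$ vanishes on $I$ — indeed for $b\in I$ and $a\in A$, Cauchy--Schwarz for the positive functional $p'$ yields $\lvert p'(a^*b)\rvert^2\leq p'(a^*a)p'(b^*b)$, and $p'(b^*b)\leq p(b^*b)=\mathsf{p}(q(b)^*q(b))=0$ since $q(b)=0$; taking $a$ from an algebra with local units or using that $p'(b)=\lim p'(e_i^*b)$ in a suitable completion shows $p'(b)=0$, so $p'=p''\circ q$ for a well-defined representable positive functional $p''$ on $A/I$ with $p''\leq\mathsf{p}$ and $p''\leq\mathsf{g}$; singularity forces $p''=0$, hence $p'=0$. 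Thus $p\perp g$.

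Now $p\leq f\ll g$ and $p\perp g$ hold over $A$, so the hypothesis that the Lebesgue decomposition is unique over $A$ together with Lemma \ref{uniqlem1}(iii) gives $p=0$, i.e. $\mathsf{p}(q(a))=0$ for all $a\in A$; since $q$ is surjective, $\mathsf{p}=0$. By Lemma \ref{uniqlem1} applied over $A/I$, the Lebesgue decomposition of representable positive functionals is unique over $A/I$.

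The main obstacle I anticipate is the descent step in the third point, namely showing that a representable positive functional $p'$ on $A$ dominated by $p=\mathsf{p}\circ q$ actually factors through $q$. Domination gives $L_p\subseteq L_{p'}$ by \eqref{WDtul}, and $I\subseteq L_p$ because $p(b^*b)=0$ for $b\in I$, so $I\subseteq L_{p'}$, i.e. $p'(b^*b)=0$ for all $b\in I$; the Cauchy--Schwarz inequality valid for any positive linear functional then gives $p'(a^*b)=0$ for all $a\in A$, $b\in I$, and in particular $p'(b)=0$ whenever $b\in I^2+\mathbb{C}\text{-span of such}$ — one cleanly gets $p'$ vanishing on $I$ either by noting $I\cdot I$ spans $I$ up to the needed elements in the representable setting, or more robustly by passing to the GNS representation $\pi_{p'}$ with cyclic vector $\xi_{p'}$: there $\pi_{p'}(b)\xi_{p'}=0$ for $b\in I$ since $\lVert\pi_{p'}(b)\xi_{p'}\rVert^2=p'(b^*b)=0$, hence $\pi_{p'}(b)\pi_{p'}(a)\xi_{p'}=\pi_{p'}(ba)\xi_{p'}=0$ (as $ba\in I$), so $\pi_{p'}(b)=0$ on the dense subspace $\pi_{p'}\langle A\rangle\xi_{p'}$, giving $\pi_{p'}(b)=0$ and $p'(b)=(\pi_{p'}(b)\xi_{p'}\mid\xi_{p'})=0$. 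Then $p'$ descends to a well-defined positive functional $p''$ on $A/I$, representable by the same $C^*$-seminorm argument (the seminorm for $p'$ vanishes on $I$ or can be replaced by one that does), and the inequalities $p''\leq\mathsf{p}$, $p''\leq\mathsf{g}$ descend directly. Everything else is a routine transfer through the surjective $^*$-homomorphism $q$.
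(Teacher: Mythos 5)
Your proof is correct and follows essentially the same route as the paper: pull the functionals back along the quotient map $q$, note that the pullbacks are representable because $\tau\circ q$ is a $C^*$-seminorm, and invoke the uniqueness over $A$ through Lemma \ref{uniqlem1}. The only difference is that you verify criterion (iii) of that lemma rather than (ii), which obliges you to transfer singularity and hence to carry out the descent step (showing a representable $p'\leq p$ annihilates $I$ and factors through $q$) --- your GNS argument for this is correct, while the paper's use of (ii) needs only the upward transfer of $\leq$ and $\ll$ and is a little shorter.
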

\begin{proof}
	We verify the condition in Lemma \ref{uniqlem1} (ii), so consider representable positive functionals $t',f'$ and $g'$ on $A/I$ such that
	\[
	t'\leq f'\ll g'.
	\]
	To see that $t'\ll g'$, denote by $j$ the canonical $A\to A/I$ surjective $^*$-homorphism (i.e., $j(a):=a+I$), and let
	\[
	t=t'\circ j;\ f=f'\circ j;\ g=g'\circ j. 
	\]
	It is obvious that these mappings are positive functionals on $A$, moreover they are representable.
	Indeed, by the preceding Remark \ref{represent}, there is a $C^*$-seminorm $\sigma'$ on $A/I$ with $K\in\mathbb{R}_+$ such that
	\[
	\lvert t'(a+I)\rvert\leq K\sigma'(a+I);\ \lvert f'(a+I)\rvert\leq K\sigma'(a+I);\ \lvert g'(a+I)\rvert\leq K\sigma'(a+I) 
	\]
	is true for any $a\in A$. Using the mapping $j$ in the formulation, we infer that
	\[
	\lvert t'(j(a))\rvert\leq K\sigma'(j(a));\ \lvert f'(j(a))\rvert\leq K\sigma'(j(a));\ \lvert g'(j(a))\rvert\leq K\sigma'(j(a)), 
	\]
	which exactly means that the functionals $t,f,g$ are continuous with respect to the $C^*$-seminorm $\sigma:=\sigma'\circ j$.
	
	From Definition \ref{absdef} of absolute continuity, it is easy to see that
	\[
	t\leq f\ll g
	\]
	holds (the inequality $t\leq f$ is trivial).
	Hence, the uniqueness over $A$ provides that $t\ll g$, which immediately shows that $t'\ll g'$. 
\end{proof}

\begin{remark}\label{easy}
	An observation similar to the preceding proof is the following. 
	Let $A$ be a $^*$-algebra and let $t,f,g$ be representable positive functionals on $A$.
	Assume that the functionals are continuous with respect to the $C^*$-seminorm $\sigma:A\to\mathbb{R}_+$. 
Denote  by $I$ the kernel of the seminorm, that is, $I:=\ker(\sigma)$.
	Then the functionals 
	\[
	t'(a+I):=t(a);\ f'(a+I):=f(a);\ g'(a+I):=g(a)\ \ \ (a\in A)
	\] 
	are well-defined, positive linear and representable on the quotient $^*$-algebra $A/I$. In fact, this $^*$-algebra possesses the quotient pre-$C^*$-norm $\sigma'$ induced by $\sigma$, and the functionals are continuous with respect to this norm.
	
	The following equivalence, which we use later, can be easily obtained from the definitions:
	\begin{equation}\label{kamu}
	(t\leq f\ll g)\ \Leftrightarrow\ (t'\leq f'\ll g').
	\end{equation}
\end{remark}

The next statement is a special case of  Lemma 2.10 in \cite{SzAbs}.
We note that if $f$ is a positive functional on a $C^*$-algebra $\mathscr{A}$, then every restriction of $f$ to a $^*$-subalgebra $\mathscr{D}$ is obviously representable, since $f$ is continuous with respect to the $C^*$-norm.  

\begin{lemma}\label{surus}
	Let $\mathscr{A}$ be a $C^*$-algebra with a dense $^*$-subalgebra $\mathscr{D}$.
	If $f$ and $g$ are positive functionals on $\mathscr{A}$, then
	\begin{enumerate}
		\item[(1)] $f\leq g$ if and only if $f\!\mid\!_{\mathscr{D}}\leq g\!\mid\!_{\mathscr{D}}$.
		
		\item[(2)] $f$ is absolutely continuous with respect to $g$ if and only if $f\!\mid\!_{\mathscr{D}}$ is absolutely continuous with respect to $g\!\mid\!_{\mathscr{D}}$.
	\end{enumerate}
\end{lemma}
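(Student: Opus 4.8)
The sentence before the lemma already records that it is a special case of \cite{SzAbs}, Lemma 2.10; here is the self-contained route I would take. I use freely that positive functionals on a $C^*$-algebra are bounded (hence norm-continuous and representable), that the involution and multiplication of $\mathscr{A}$ are norm-continuous, and consequently that $d_k\to a$ in $\mathscr{A}$ forces $d_k^*d_k\to a^*a$. In part (1) the implication ``$\Rightarrow$'' is trivial, a restriction of a positive functional being positive. For ``$\Leftarrow$'', $g-f$ is a bounded linear functional that is non-negative on $\{d^*d:d\in\mathscr{D}\}$; given $a\in\mathscr{A}$, choose $d_k\to a$ in $\mathscr{D}$ and pass to the limit in $(g-f)(d_k^*d_k)\ge 0$ to obtain $(g-f)(a^*a)\ge 0$, that is, $f\le g$. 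This isolates a principle used repeatedly below: a bounded linear functional on $\mathscr{A}$ that is non-negative on $\{d^*d:d\in\mathscr{D}\}$ is positive. The implication ``$\Rightarrow$'' of part (2) is already contained in Remark \ref{altul} (absolute continuity passes to restrictions, and the restrictions are representable).

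The substance is ``$\Leftarrow$'' of part (2). Assume $f\!\mid\!_{\mathscr{D}}\ll g\!\mid\!_{\mathscr{D}}$ and fix, by Definition \ref{absdef}, an increasing sequence $(f_n)$ of representable positive functionals on $\mathscr{D}$ and positive numbers $(\alpha_n)$ with $f_n\le\alpha_n\, g\!\mid\!_{\mathscr{D}}$ and $\sup_n f_n=f\!\mid\!_{\mathscr{D}}$. The plan is to manufacture an increasing sequence $(\widetilde{f_n})$ of representable positive functionals on $\mathscr{A}$ with $\widetilde{f_n}\le\alpha_n g$ and $\sup_n\widetilde{f_n}=f$; by Definition \ref{absdef} this witnesses $f\ll g$. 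To build $\widetilde{f_n}$ I would work in the GNS triple $(\pi_g,\mathcal{H}_g,\xi_g)$ of $g$: the inequality $f_n\le\alpha_n\, g\!\mid\!_{\mathscr{D}}$ together with Cauchy--Schwarz makes the sesquilinear form $\bigl(\pi_g(d)\xi_g,\pi_g(e)\xi_g\bigr)\mapsto f_n(e^*d)$ well defined and bounded by $\alpha_n$ on the dense subspace $\{\pi_g(d)\xi_g:d\in\mathscr{D}\}$ of $\mathcal{H}_g$, hence represented by an operator $T_n$ with $0\le T_n\le\alpha_n$. A short computation on that dense subspace gives $T_n\pi_g(d)=\pi_g(d)T_n$ for $d\in\mathscr{D}$, so $T_n\in(\pi_g\langle\mathscr{A}\rangle)'$ by continuity and density, and $T_n\le T_{n+1}$ since $f_n\le f_{n+1}$. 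Put $\widetilde{f_n}(a):=(T_n\pi_g(a)\xi_g\!\mid\!\xi_g)_g$. Then $\widetilde{f_n}$ is a bounded positive (hence representable) functional on $\mathscr{A}$, one has $\widetilde{f_n}(b^*b)=(T_n\pi_g(b)\xi_g\!\mid\!\pi_g(b)\xi_g)_g\le\alpha_n g(b^*b)$, so $\widetilde{f_n}\le\alpha_n g$, and the commutation yields $\widetilde{f_n}(e^*d)=f_n(e^*d)$ for all $d,e\in\mathscr{D}$.

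Now the density of $\mathscr{D}$ does the remaining work. The linear span of $\{e^*d:e,d\in\mathscr{D}\}$, which by polarization equals the linear span of $\{d^*d:d\in\mathscr{D}\}$, is norm-dense in $\mathscr{A}$: its closure contains every $a^*a$ with $a\in\mathscr{A}$, and such elements span $\mathscr{A}$. From $T_n\le T_{n+1}$ we get $\widetilde{f_n}\le\widetilde{f_{n+1}}$; for $b\in\mathscr{A}$, picking $d_k\to b$ in $\mathscr{D}$ and combining $f_n(d_k^*d_k)\le f(d_k^*d_k)$ with the continuity of $\widetilde{f_n}$ and $f$ gives $\widetilde{f_n}\le f$. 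Hence $(\widetilde{f_n})$ is increasing and dominated by the representable functional $f$, so $\sup_n\widetilde{f_n}$ exists; moreover $\sup_n\widetilde{f_n}$ and $f$, both continuous, agree on the (dense) linear span of $\{e^*d:e,d\in\mathscr{D}\}$ because $\lim_n\widetilde{f_n}(e^*d)=\lim_n f_n(e^*d)=f(e^*d)$, hence they coincide on $\mathscr{A}$. This is $f\ll g$.

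The main obstacle is exactly this ``$\Leftarrow$'' of part (2): one must extend each $f_n$ off $\mathscr{D}$ while simultaneously keeping monotonicity, the bound by $\alpha_n g$, and the supremum. Routing the construction through the GNS space of $g$ makes the bound automatic and produces honest positive functionals on all of $\mathscr{A}$; the density of $\mathscr{D}$ enters only through the density of the span of $\{e^*d:e,d\in\mathscr{D}\}$, which is what lets these extensions inherit the required inequalities and supremum from their values there. (An alternative is first to adjoin a unit so that $\mathbf{1}$ lies in the dense subalgebra, after which a direct Cauchy--Schwarz estimate shows each $f_n$ is already $C^*$-norm continuous and extends uniquely by continuity; the GNS route seems marginally cleaner and avoids the unitization bookkeeping.)
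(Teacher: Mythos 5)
Your argument is correct, and it is worth noting that the paper itself offers no proof of this lemma at all: it is disposed of by the single sentence citing Lemma 2.10 of \cite{SzAbs}, so your GNS construction is a genuine self-contained substitute rather than a paraphrase of anything printed here. All the delicate points are handled: the well-definedness and boundedness of the form $\bigl(\pi_g(d)\xi_g,\pi_g(e)\xi_g\bigr)\mapsto f_n(e^*d)$ do follow from $f_n\le\alpha_n\,g\!\mid\!_{\mathscr{D}}$ via Cauchy--Schwarz for the positive form $(d,e)\mapsto f_n(e^*d)$; the commutation $T_n\in(\pi_g\langle\mathscr{A}\rangle)'$ passes from $\mathscr{D}$ to $\mathscr{A}$ because $\pi_g$ is a contraction; positivity of $\widetilde{f_n}$ on $\mathscr{A}$ uses exactly that commutation; and the identification $\sup_n\widetilde{f_n}=f$ correctly reduces to agreement of two norm-continuous functionals on the dense span of $\mathscr{D}^*\mathscr{D}$, whose density you justify properly (its closure contains every $a^*a$, and such elements span $\mathscr{A}$). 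The one place a reader might pause is that $\widetilde{f_n}$ need not restrict to $f_n$ on all of $\mathscr{D}$, only on $\operatorname{span}\{e^*d\}$; you implicitly avoid relying on more than that, since the inequality $\widetilde{f_n}\le f$ and the supremum are both checked through products $d_k^*d_k$ and $e^*d$ alone. What your route buys, compared with the unitization-plus-continuity alternative you mention, is that the dominating bound $\widetilde{f_n}\le\alpha_n g$ and the monotonicity in $n$ come for free from $0\le T_n\le T_{n+1}\le\alpha_{n+1}$, which is precisely the bookkeeping that a purely norm-extension argument has to redo by hand.
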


The last essential observation is the following.

\begin{corollary}\label{vegsocor}
	If $\mathscr{A}$ is a $C^*$-algebra with a dense $^*$-subalgebra $\mathscr{D}$, then the uniqueness of the Lebesgue decomposition of representable positive functionals over $\mathscr{D}$ implies the uniqueness of the decomposition of positive functionals over $\mathscr{A}$.
\end{corollary}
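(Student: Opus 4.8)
The plan is to reduce the uniqueness over $\mathscr{A}$ to the uniqueness over $\mathscr{D}$ by restricting functionals and invoking the characterization of uniqueness in Lemma \ref{uniqlem1}. Since $\mathscr{A}$ is a $C^*$-algebra, every positive functional on it is automatically representable, so by Lemma \ref{uniqlem1} it suffices to verify property (ii) for $\mathscr{A}$: whenever $t,f,g$ are positive functionals on $\mathscr{A}$ with $t\leq f\ll g$, then $t\ll g$.

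So I would first fix such $t,f,g$ on $\mathscr{A}$ and pass to the restrictions $t\!\mid\!_{\mathscr{D}}$, $f\!\mid\!_{\mathscr{D}}$, $g\!\mid\!_{\mathscr{D}}$. As noted just before Lemma \ref{surus}, each of these is a \emph{representable} positive functional on $\mathscr{D}$, being continuous with respect to the $C^*$-norm of $\mathscr{A}$ restricted to $\mathscr{D}$. Now Lemma \ref{surus} (1) turns $t\leq f$ into $t\!\mid\!_{\mathscr{D}}\leq f\!\mid\!_{\mathscr{D}}$, and Lemma \ref{surus} (2) turns $f\ll g$ into $f\!\mid\!_{\mathscr{D}}\ll g\!\mid\!_{\mathscr{D}}$, so that $t\!\mid\!_{\mathscr{D}}\leq f\!\mid\!_{\mathscr{D}}\ll g\!\mid\!_{\mathscr{D}}$. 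Since the Lebesgue decomposition of representable positive functionals over $\mathscr{D}$ is unique, Lemma \ref{uniqlem1} (ii) applied to $\mathscr{D}$ gives $t\!\mid\!_{\mathscr{D}}\ll g\!\mid\!_{\mathscr{D}}$. Using the ``only if'' direction of Lemma \ref{surus} (2) for the pair $t,g$, we recover $t\ll g$. Hence property (ii) of Lemma \ref{uniqlem1} holds for $\mathscr{A}$, and therefore the Lebesgue decomposition of positive functionals over $\mathscr{A}$ is unique.

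I do not expect a genuine obstacle here: the argument is bookkeeping that chains Lemma \ref{surus} with the two directions of Lemma \ref{uniqlem1} (ii). The single point deserving attention is the observation that the restrictions $t\!\mid\!_{\mathscr{D}}, f\!\mid\!_{\mathscr{D}}, g\!\mid\!_{\mathscr{D}}$ are representable on $\mathscr{D}$, which is what allows Lemma \ref{uniqlem1} to be applied to the $^*$-algebra $\mathscr{D}$ (in general $\mathscr{D}$ need not be a $C^*$-algebra, so representability is not automatic for arbitrary positive functionals on it, but it is for these restrictions).
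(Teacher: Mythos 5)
Your proposal is correct and follows essentially the same route as the paper: restrict $t\leq f\ll g$ to $\mathscr{D}$ via Lemma \ref{surus}, apply the uniqueness over $\mathscr{D}$ through Lemma \ref{uniqlem1} (ii), and transfer $t\!\mid\!_{\mathscr{D}}\ll g\!\mid\!_{\mathscr{D}}$ back to $t\ll g$ by Lemma \ref{surus} again. Your remark on the representability of the restrictions is exactly the observation the paper records just before Lemma \ref{surus}.
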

\begin{proof}
	We use Lemma \ref{uniqlem1} (ii) to prove the uniqueness over $\mathscr{A}$, so let $t,f,g$ be positive functionals on $\mathscr{A}$ with the condition
	\[
	t\leq f\ll g.
	\] 
	From the previous Lemma \ref{surus}, we conclude that
	\[
	t\!\mid\!_{\mathscr{D}}\leq f\!\mid\!_{\mathscr{D}}\ll g\!\mid\!_{\mathscr{D}}.
	\]
	Thus, the uniqueness over $\mathscr{D}$ implies that $t\!\mid\!_{\mathscr{D}}\ll g\!\mid\!_{\mathscr{D}}$.
	Using Lemma \ref{surus} again, we obtain that $t\ll g$, that is, the uniqueness over $\mathscr{A}$ follows.
\end{proof}

All of the necessary results and tools are on hands now, so we prove our main Theorem \ref{main} on the connection of the uniqueness of the Lebesgue decomposition and finite dimensional irreducible representations. 
We restate it here for the sake of clarity.

\begin{theorem}\label{mainnew}
	For a $^*$-algebra $A$, the following statements are equivalent.
	\begin{itemize}
		\item[(i)]The Lebesgue decomposition of representable positive functionals over $A$ is u\-ni\-qu\-e.
		\item[(ii)]Every topologically irreducible representation of $A$ is finite dimensional.
	\end{itemize}
\end{theorem}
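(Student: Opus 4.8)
The plan is to bootstrap from the $C^*$-algebra case (Theorem \ref{mainC}) to arbitrary $^*$-algebras by means of $C^*$-seminorms, using the fact (Theorem 9.5.17 in \cite{Palmer2}) that a positive functional on $A$ is representable precisely when it is continuous with respect to some $C^*$-seminorm. The two directions require rather different arguments, and I would handle them separately.

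For the implication (ii)$\Rightarrow$(i), I would argue by contraposition: suppose the Lebesgue decomposition over $A$ is not unique. By Lemma \ref{uniqlem1}, there are representable positive functionals $t,f,g$ on $A$ with $t\leq f\ll g$ but $t\not\ll g$. By Remark \ref{represent}, all three are continuous with respect to a single $C^*$-seminorm $\sigma$ on $A$; set $I:=\ker(\sigma)$ and pass to the quotient $^*$-algebra $A/I$, which carries the induced pre-$C^*$-norm $\sigma'$. By Remark \ref{easy}, the induced functionals $t',f',g'$ on $A/I$ satisfy $t'\leq f'\ll g'$ but $t'\not\ll g'$ (using the equivalence \eqref{kamu}), so the decomposition is not unique over the pre-$C^*$-algebra $(A/I,\sigma')$. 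Now complete $(A/I,\sigma')$ to a $C^*$-algebra $\mathscr{A}$ in which $A/I$ sits as a dense $^*$-subalgebra; by Corollary \ref{vegsocor} (contrapositive), the decomposition is not unique over $\mathscr{A}$ either, so by Theorem \ref{mainC}, $\mathscr{A}$ has an infinite dimensional irreducible representation $\pi$. Restricting $\pi$ to the dense subalgebra $A/I$ yields a topologically irreducible representation of $A/I$ on the same (infinite dimensional) Hilbert space, and composing with the quotient map $A\to A/I$ gives a topologically irreducible representation of $A$ that is infinite dimensional, contradicting (ii). (Here one must check that restriction to a dense $^*$-subalgebra preserves topological irreducibility — which it does, since the closed invariant subspaces are the same.)

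For the implication (i)$\Rightarrow$(ii), again I argue by contraposition: suppose $A$ admits a topologically irreducible representation $\pi:A\to\mathscr{B}(\mathcal{H})$ with $\dim\mathcal{H}=\infty$. Then $a\mapsto\Vert\pi(a)\Vert$ is a $C^*$-seminorm $\sigma$ on $A$; let $I:=\ker(\sigma)$, so that $\pi$ factors through an injective (topologically irreducible) representation of the pre-$C^*$-algebra $(A/I,\sigma')$, hence extends to an irreducible representation of its $C^*$-completion $\mathscr{A}$ on $\mathcal{H}$, still infinite dimensional. By Theorem \ref{mainC} ((ii)$\Rightarrow$(i) contrapositive, i.e. Theorem \ref{infinite}), the Lebesgue decomposition of positive functionals over $\mathscr{A}$ is not unique; by Lemma \ref{uniqlem1} this gives positive functionals $t,f,g$ on $\mathscr{A}$ with $t\leq f\ll g$, $t\not\ll g$. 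Restricting to the dense $^*$-subalgebra $A/I$ and invoking Lemma \ref{surus} preserves all three relations, so the decomposition is not unique over $A/I$; finally, pulling back along the surjection $A\to A/I$ as in the proof of Lemma \ref{faktoruniq} produces representable functionals on $A$ witnessing non-uniqueness (here one uses that $f\ll g$ lifts to $f\circ j\ll g\circ j$ and that continuity with respect to $\sigma'$ lifts to continuity with respect to $\sigma'\circ j$). This contradicts (i).

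The main obstacle I anticipate is not any single deep step — Theorem \ref{mainC} does the heavy lifting — but rather the careful bookkeeping of the three passages (quotient by $\ker(\sigma)$, completion to a $C^*$-algebra, restriction to/extension from a dense subalgebra) and verifying that each one preserves exactly the structure needed: topological irreducibility and infinite-dimensionality in one direction, and the triple of relations $t\leq f\ll g$ together with the failure $t\not\ll g$ in the other. The technical lemmas \ref{faktoruniq}, \ref{surus}, \ref{vegsocor} and Remarks \ref{represent}, \ref{easy} are precisely designed to make these transitions routine, so the proof should reduce to assembling them in the right order; the only genuinely new observation needed is that an infinite dimensional topologically irreducible representation of $A$ survives the quotient-and-complete procedure as an infinite dimensional irreducible representation of a $C^*$-algebra, and conversely.
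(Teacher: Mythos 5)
Your proposal is, in substance, the paper's own proof of Theorem \ref{mainnew}: both directions reduce to the $C^*$-algebra case (Theorem \ref{mainC}, resp.\ Theorem \ref{infinite}) by the same three-step passage --- factor through the kernel of a dominating $C^*$-seminorm, complete the resulting pre-$C^*$-algebra, and move functionals and representations back and forth via Lemma \ref{faktoruniq}, Remark \ref{easy}, Lemma \ref{surus} and Corollary \ref{vegsocor} --- the only difference being that you phrase both implications contrapositively, whereas the paper argues (ii)$\Rightarrow$(i) directly.

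One citation in your (ii)$\Rightarrow$(i) argument points the wrong way. Corollary \ref{vegsocor} states that uniqueness over a dense $^*$-subalgebra $\mathscr{D}$ implies uniqueness over the ambient $C^*$-algebra $\mathscr{A}$; its contrapositive therefore transfers \emph{non}-uniqueness from $\mathscr{A}$ down to $\mathscr{D}$, which is the opposite of the direction you need when passing from $(A/I,\sigma')$ up to its completion $\mathscr{A}$. The step itself is correct, but the justification is different: the witnesses $t',f',g'$ on $A/I$ are $\sigma'$-continuous, hence admit unique positive linear extensions $T',F',G'$ to $\mathscr{A}$, and Lemma \ref{surus} (both parts, used as equivalences) gives $T'\leq F'\ll G'$ together with $T'\not\ll G'$, so Lemma \ref{uniqlem1} yields non-uniqueness over $\mathscr{A}$. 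With that repair the argument is complete; everything else --- in particular the observations that topological irreducibility passes to dense $^*$-subalgebras and pulls back along the surjection $A\to A/I$, and that your contrapositive formulation silently disposes of the degenerate case in which $A$ admits no topologically irreducible representation at all --- is handled exactly as in the paper.
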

\begin{proof}
According to Remarks \ref{trivial} and \ref{trivial2}, the set of topologically irreducible representations of $A$ is void iff the zero functional is the only representable positive functional on $A$. 
Hence, in this case, the equivalence of (i) and (ii) is obvious. 

For the rest of the proof, we assume that there is a topologically irreducible representation of $A$.
	
(i)$\Rightarrow$(ii): Consider a topologically irreducible representation $\pi:A\to\mathscr{B}(\mathcal{H})$ of $A$ on the Hilbert space $\mathcal{H}$.
	Let $\mathscr{D}$ be the range $^*$-algebra $\pi\left<A\right>$, and denote by $\mathscr{A}$ the norm closure   
	of $\mathscr{D}$ in $\mathscr{B}(\mathcal{H})$. 
	
	The identity representation of the $C^*$-algebra $\mathscr{A}$ on $\mathcal{H}$ is irreducible, thanks to the topological irreducibility of $\pi$. 
	Hence, if we show that the Lebesgue decomposition of positive functionals over $\mathscr{A}$ is unique, then Theorem \ref{mainC} forces the dimension of $\mathcal{H}$ to be finite, providing (ii). But the uniqueness comes easily from the lemmas: in virtue of (i) and Lemma \ref{faktoruniq}, the uniqueness holds for representable positive functionals over $\mathscr{D}$, so the preceding Corollary \ref{vegsocor} proves the uniqueness over $\mathscr{A}$.

	(ii)$\Rightarrow$(i):
	Suppose that every topologically irreducible representation of $A$ is finite dimensional. To check the uniqueness, let $t,f,g$ be representable positive functionals on $A$ such that 
	\[
	t\leq f\ll g.
	\] 
	In accordance to Lemma \ref{uniqlem1} (ii), we need to show that $t\ll g$. 
	The representability implies the existence of a $C^*$-seminorm $\sigma$ on $A$ such that the three functionals are continuous with respect to $\sigma$ (Remark \ref{represent}). Let $I$ be the kernel of $\sigma$. Denote by $\mathscr{D}$ the quotient $^*$-algebra $A/I$ and let $j$ be the canonical $A\to A/I$ surjection.
	Consider the positive functionals $t',f',g'$ and the pre-$C^*$-norm $\sigma'$ on $A/I=\mathscr{D}$ according to Remark \ref{easy}.  
	By \eqref{kamu}, we have
	\[
	(t\leq f\ll g)\ \Leftrightarrow\ (t'\leq f'\ll g').
	\]
	Let $\mathscr{A}$ stand for the completion of the pre-$C^*$-algebra $(\mathscr{D},\sigma')$. Since the positive functionals are continuous (with respect to the $C^*$-norm) on the dense $^*$-algebra $\mathscr{D}$, they admit unique positive linear extensions $T',F',G'$ to the whole $C^*$-algebra $\mathscr{A}$.
	Now Lemma \ref{surus} infers that
	\[
	(t'\leq f'\ll g')\ \Leftrightarrow\ (T'\leq F'\ll G').
	\] 
	Thus, if we show that $T'\ll G'$, then using Lemma \ref{surus} again, $t'\ll g'$ follows. Consequently, $t\ll g$ by means of Remark \ref{easy}.
	
	To claim that $T'$ is absolutely continuous with respect to $G'$, it is enough to prove that every irreducible representation of the $C^*$-algebra $\mathscr{A}$ is finite dimensional. 
	Indeed, in this case Theorem \ref{mainC} provides that the Lebesgue decomposition of positive functionals over $\mathscr{A}$ is unique, thus, from the property $T'\leq F'\ll G'$,  Lemma \ref{uniqlem1} forces that $T'\ll G'$.
	But, if $\pi:\mathscr{A}\to\mathscr{B}(\mathcal{H})$ is an irreducible representation, then density implies that $\pi\!\mid\!_{\mathscr{D}}$ is a topologically irreducible representation of $\mathscr{D}$. 
Hence, the surjectivity of $j$ implies that $\left(\pi\!\mid\!_{\mathscr{D}}\right)\circ j:A\to\mathscr{B}(\mathcal{H})$ is a topologically irreducible representation of $A$. From the assumption (ii), it is finite dimensional, consequently $\pi$ is finite dimensional. 
	The proof is complete.  
\end{proof}

\section{REMARKS AND EXAMPLES}\label{utols}

The last section of the paper contains interesting examples and remarks re\-le\-vant to the Lebesgue decomposition theory of positive functionals and our characterization Theorem \ref{mainnew}. 

To throw more light on the non-$C^*$-algebra version of the Lebesgue decomposition and the uniqueness, we collect several facts and examples from the theories of general $^*$-algebras (\cite{Palmer2}) and topological $^*$-algebras (\cite{F}).
We discuss the question here that which kind of $^*$-al\-geb\-ras have non-trivial Lebesgue decomposition theory for representable positive functionals. 
Moreover, if possible, one might require that every element of the $^*$-algebra would be involved in the investigations.

Let $A$ be a $^*$-algebra. 
Following T.W. Palmer (\cite{Palmer2}, section 9.7), we recall the next

\begin{definition}
Denote by $A_R$ the intersection of the kernels of all of the ($^*$-)representations of $A$.
The $^*$-ideal $A_R$ is the \emph{reducing ideal} of $A$. 
If $A_R=\{0\}$ (resp. $A_R=A$), then $A$ is said to be \emph{reduced} (resp. \emph{$^*$-radical}).
\end{definition} 

Theorem 9.7.2 in \cite{Palmer2} shows that 
\begin{equation}\label{redid}
A_R=\bigcap_{\textrm{$\sigma$ is a $C^*$-seminorm on $A$}}\ker(\sigma),
\end{equation}
and, in fact, $A_R$ is the intersection of the kernels of the representable positive functionals on $A$. 
Furthermore, the reducing ideal equals to the intersection of the kernels of all of the topologically irreducible representations of $A$.
In particular, $A=A_R$ iff $A$ does not admit such a representation, and this occurs exactly when the only representable positive functional on $A$ is the zero functional (Remark \ref{trivial2}).
Hence, to get non-trivial Lebesgue decomposition theory, we seek $^*$-algebras which are reduced, i.e., there are enough representable positive functionals/topologically irreducible representations to separate the points of the algebra in question.
Note that the Jacobson-radical $A_J$ of $A$ (\cite{Palmer1}, Definition 4.3.1 and Theorem 4.3.6) is always contained in the reducing ideal $A_R$ (\cite{Palmer2}, Theorem 9.7.11), so a reduced $^*$-algebra is (Jacobson-)semisimple (i.e., $A_J=\{0\}$), while a Jacobson-radical $^*$-algebra (i.e., $A_J=A$) is $^*$-radical.

By \eqref{redid}, it is obvious that pre-$C^*$-algebras (in particular, $C^*$-algebras) are reduced. 
In Subsections \ref{redGcsillag} and \ref{rednonGcsillag}, we list many different types of reduced $^*$-algebras which are not $C^*$-algebras. 
On the other extreme, we must record here that there are many interesting $^*$-algebras which are $^*$-radicals.
Some of them are included in the next

\begin{example}\label{trivipl}
The trivial example for a $^*$-radical $^*$-algebra is an arbitrary complex vector space $A$ with an involution and zero multiplication.
By definition, every linear functional on $A$ is positive. 
On the other hand, the zero functional is the only representable positive functional on $A$.
Indeed, if $f$ is a representable positive functional on $A$, then by Theorem 9.4.15 in \cite{Palmer2}, there exists a $K\in\mathbb{R}_+$ such that
\[
|f(a)|^2\leq K f(a^*a)
\]
holds for any $a\in A$. 
Hence $f$ must be zero.

The following non-trivial examples are from Palmer's book \cite{Palmer2} (the numbers refer to this work).  
\begin{itemize}
\item[(9.7.27)] The $^*$-algebra of all complex $\mathbb{N}\times\mathbb{N}$ matrices that have only finitely many non-zero entries in each row and column is a unital, semisimple, and $^*$-radical $^*$-algebra.   

\item[(9.7.29)] The $^*$-algebra of all complex rational functions is a field (hence simple), which is a $^*$-radical.
In fact, any positive linear functional on this $^*$-algebra is zero.

\item[(9.7.31)] The convolution disc $^*$-algebra is a Jacobson-radical (hence $^*$-radical) commutative Banach $^*$-algebra.
Note that this algebra has no non-zero divisors of zero (\cite{Palmer1}, 4.8.3).

\item[(9.7.33)] The complex matrix algebras $M_{n,k}$, endowed with the exotic involutions described preceding to Theorem 9.1.46 in \cite{Palmer2}, are also $^*$-radicals. (As the referee pointed out, similar involutions can be defined on the Banach algebra $\mathscr{B}(\mathcal{H})$ for an infinite dimensional Hilbert space $\mathcal{H}$.)
\end{itemize}
\end{example}

Now let $A$ be a general $^*$-algebra.
We have seen that the proof of Theorem \ref{mainnew} on the uniqueness of the Lebesgue decomposition over $A$ lies upon the $C^*$-algebra version of the uniqueness (Theorem \ref{mainC}).
We used factorization arguments (see the lemmas and remarks in Subsection \ref{nemGNS}) to transfer positive functionals and representations to certain $C^*$-algebras.
Similarly to the latter arguments, the existence theorem on the Lebesgue decomposition of representable positive functionals over $A$ (Theorem \ref{Lebesgue}) can be obtained through the $C^*$-algebra version of the existence (in fact, by Lemma \ref{abszlem} and Remark \ref{messe}, the von Neumann algebra version is sufficient).
However, if we look at the proof of Theorem \ref{mainnew}, then we see that the transferring methods issue different (probably infinitely many) $C^*$-algebras for different functionals (and representations). 
So, one might ask the following. 

\begin{question}\label{quest}
Is there a distinguished $C^*$-algebra $C^*(A)$ such that the Le\-bes\-gue decomposition theory of representable positive functionals over a reduced $^*$-algebra $A$, including the uniqueness property, is essentially the same as that of the Le\-bes\-gue decomposition theory of positive functionals over $C^*(A)$? 
\end{question}

After Proposition \ref{bhatt}, we point out that the answer is positive only for a special class of $^*$-algebras, the so-called $G^*$-algebras (\cite{Palmer2}, section 10.1). 
Thus, according to the discussion above, we handle two cases  in the context of reduced $^*$-algebras: $G^*$-algebras and non-$G^*$-algebras. 
The latter case involves a class of topological $^*$-algebras, the so-called locally $C^*$-algebras (\cite{F}, chapter II).
In the end of Subsection \ref{rednonGcsillag}, we show for a reduced $^*$-algebra $A$ that the Lebesgue decomposition theory of representable positive functionals over $A$ is essentially the same as that over a distinguished locally $C^*$-algebra $E(A)$.

\subsection{Reduced $G^*$-algebras}\label{redGcsillag}

First we recall the concepts of $G^*$-al\-geb\-ras and the Gelfand-Naimark seminorm (\cite{Palmer2}, Definition 10.1.1).
\begin{definition}
Let $A$ be a $^*$-algebra. 
For every $a\in A$, let
\begin{equation*}
\gamma_A(a):=\sup\{\Vert\pi(a)\Vert|\pi\ \textrm{is a $^*$-representation of $A$ on a Hilbert space}\}.
\end{equation*}
If $\gamma_A$ is finite valued, then it is called the \emph{Gelfand-Naimark seminorm of $A$}, and we say that $A$ is a \emph{$G^*$-algebra}.
\end{definition}

By Proposition 10.1.2 in \cite{Palmer2}, it follows for any $^*$-algebra $A$ and $a\in A$ that
\begin{equation}\label{greatest}
\gamma_A(a)=\sup\{\sigma(a)|\textrm{$\sigma$ is a $C^*$-seminorm on $A$}\}.
\end{equation}
If $A$ is a $G^*$-algebra, then $\gamma_A$ is the greatest $C^*$-seminorm on $A$ and $A_R=\ker\gamma_A$. 
Thus, a $G^*$-algebra $A$ is reduced if and only if $\gamma_A$ is a norm. 
Moreover, a positive functional on $A$ is representable exactly when it is continuous with respect to $\gamma_A$ (\cite{Palmer2}, Theorem 10.1.3 and Corollary 10.1.8).

Every Banach $^*$-algebra is a $G^*$-algebra (e.g., by Theorem 11.1.5 in \cite{Palmer2}), as well as the $^*$-algebras in Example \ref{trivipl}. 
However, we are interested in reduced examples. 
For such a $^*$-algebra, we recall the notion and the properties of the enveloping $C^*$-algebra (\cite{Palmer2}, 10.1.10-12).

\begin{definition}\label{fedoCalg}
Let $A$ be a reduced $G^*$-algebra. 
Denote by $C^*(A)$ the completion of $A$ with respect to the pre-$C^*$-norm $\gamma_A$.
Then the $C^*$-algebra $C^*(A)$  is called the \emph{enveloping $C^*$-algebra of $A$}.   
\end{definition}

Theorem 10.1.12 in \cite{Palmer2} shows for a $^*$-algebra $A$ such that representable positive functionals and the $^*$-representation theory of $A$ are essentially the same as that of its enveloping $C^*$-algebra $C^*(A)$. 
From this, it was shown in Subsection 5.1 of \cite{SzAbs} that the Lebesgue decomposition theory of representable positive functionals over $A$ coincides with the Lebesgue decomposition theory of positive func\-ti\-o\-nals o\-ver $C^*(A)$ (this also follows from the discussions in Subsection \ref{rednonGcsillag} below).
Hence, in the case of reduced $G^*$-algebras, Question \ref{quest} has a positive answer.

Now we present some examples of reduced $G^*$-algebras.

\begin{example}
Let $(\mathscr{A},\Vert\cdot\Vert)$ be an (infinite dimensional) \emph{approximately finite dimensional $C^*$-algebra} in the following sense (\cite{Ta3}, chapter XIX, § 1):
there exists an increasing sequence $(A_n)_{n\in\mathbb{N}}$ consisting of finite dimensional $^*$-sub\-al\-geb\-ras of $\mathscr{A}$ such that
\[
\mathscr{A}=\overline{\bigcup_{n\in\mathbb{N}}A_n}.
\]
By \eqref{redid}, the $^*$-algebra $A:=\cup_{n\in\mathbb{N}}A_n$ is reduced, since the restriction $\Vert\cdot\Vert|_A$ is a pre-$C^*$-norm on $A$. In fact, $\Vert\cdot\Vert|_A$ is the only pre-$C^*$-norm on $A$.
Indeed, let $\sigma:A\to\mathbb{R}_+$ be a pre-$C^*$-norm and let $a\in A$ be an arbitrary element.
There exists an $n\in\mathbb{N}$ such that $a\in A_n$.
The finite dimensionality of $A_n$ implies that the pre-$C^*$-norms $\Vert\cdot\Vert|_{A_n}$ and $\sigma|_{A_n}$ are actually complete. 
It is well known that there is at most one $C^*$-norm on a $^*$-algebra (\cite{Palmer2}, Proposition 10.1.9), so $\sigma(a)=\Vert a\Vert$ follows.
Thus, $\gamma_A(a)=\Vert a\Vert$, and by \eqref{greatest}, $A$ is $G^*$-algebra. 
A concrete example for such an $A$ is the $^*$-algebra of finitely supported complex sequences with the pointwise operations and the norm $\Vert(x_n)_{n\in\mathbb{N}}\Vert:=\sup_{n\in\mathbb{N}}|x_n|$.

Other examples for reduced $G^*$-algebras can be found in \cite{Palmer2}: 10.3.17, 10.3.18 and the $^*$-algebras that appear in Theorem 10.6.11.
Excluding the cases of the latter, there are no complete submultiplicative norms on these $^*$-algebras. 
\end{example}

As we mentioned above, every Banach $^*$-algebra $A$ is a $G^*$-algebra. 
Hence, by \eqref{redid}, $A$ is reduced if and only if there is a pre-$C^*$-norm on $A$.
In the earlier literature, reduced Banach $^*$-algebras were called \emph{$A^*$-algebras}.

\begin{example}\label{Acsillag}
Below we list some examples of reduced Banach $^*$-algebras which are not $C^*$-algebras.
\begin{itemize}
\item[(1)] The disc $^*$-algebra $A(\mathbb{D})$ on the closed unit disc $\mathbb{D}$  (\cite{Palmer2}, 9.7.25).
That is, endowed with the pointwise operations and the supremum norm, $A(\mathbb{D})$ is the Banach algebra of continuous complex valued functions on $\mathbb{D}$ that are analytic on the interior of $\mathbb{D}$, equipped with the involution
\[
^*:A(\mathbb{D})\to A(\mathbb{D});\ a^*(z)=\overline{a(\overline{z})}\ (z\in\mathbb{D}).
\]  
The enveloping $C^*$-algebra of $A(\mathbb{D})$ is the $C^*$-algebra of continuous functions on the interval $[-1,1]$.

\item[(2)] The Banach $^*$-algebras of the Hilbert-Schmidt and the trace-class operators on an infinite dimensional Hilbert space $\mathcal{H}$ (\cite{Palmer2}, Theorems 9.1.32. and 9.1.35) are not $C^*$-algebras. 
The enveloping $C^*$-algebra of both $^*$-algebras is the $C^*$-al\-geb\-ra of the compact operators on $\mathcal{H}$. 
\end{itemize}
The Lebesgue decompositions over the $^*$-algebras in (1) is unique, while non-uniqueness occurs for the $^*$-algebras in (2).
\end{example}

Because of its importance, we treat the case of the $L^1$-algebras of locally compact groups separately. 
As we mentioned at the end of the Preliminaries, the uniqueness of the Lebesgue decomposition provides a new characterization of Moore groups. These groups were named after C. C. Moore, due to his famous article \cite{Mo}, in which many beautiful properties and characterizing criterions were proved. 
For a comprehensive description, see sections 12.4 and 12.5 in Palmer's book \cite{Palmer2}. 
The basic definition of such a locally compact group $G$ is the following. If $V:G\to\mathscr{B}(\mathcal{H})$ is a continuous, unitary representation on the Hilbert space $\mathcal{H}$ which is topologically irreducible (\cite{Palmer2}, Definition 9.2.19), then $\mathcal{H}$ is finite dimensional. Hence finite groups, compact groups and locally compact abelian groups belong to Moore groups.

Let $G$ be a locally compact group.
For a fixed left Haar measure $\beta$ on $G$ the space $L^1(G)$ is a Banach $^*$-algebra with the convolution product related to $\beta$ and a natural isometric involution (\cite{Dixmier}, 13.2; \cite{Palmer2}, 9.1.8).
Moreover, well-known results (\cite{Palmer2}, Theorem 12.4.4 and Corollary 12.4.5) show that there exists a pre-$C^*$-norm on $L^1(G)$, hence it is a reduced Banach $^*$-algebra.
The enveloping $C^*$-algebra of $L^1(G)$ is the so-called \emph{full group-$C^*$-algebra} (with common notation: $C^*(G)$).
It is a remarkable fact that  every positive functional defined on $L^1(G)$ is continuous and representable (\cite{Dixmier}, B29 and 13.2.5; \cite{Palmer2}, Corollary 11.3.8). 

Theorem 12.4.1 in \cite{Palmer2} describes a natural bijective correspondence between the continuous unitary representations of $G$   and the non-degenerate representations of $L^1(G)$ (as well as the non-degenerate representations of  $C^*(G)$). By this bijection, one can point out that $G$ is a Moore group if and only if every topologically irreducible representation of $L^1(G)$ is finite dimensional. Hence, our main Theorem \ref{mainnew} and the previous comments immediately imply the following.

\begin{corollary}\label{Moore}
Let $G$ be a locally compact group. Then the following statements are equivalent.
\begin{itemize}
\item[(i)] $G$ is a Moore group.
\item[(ii)] The Lebesgue decomposition of positive functionals over $L^1(G)$ is unique.
\item[(iii)] The Lebesgue decomposition of positive functionals over $C^*(G)$ is unique.
\end{itemize}
\end{corollary}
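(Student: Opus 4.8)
The plan is to reduce both (ii) and (iii) to statements about the dimensions of irreducible representations, via the characterizations already established, and then to transport these statements to the group $G$ through the classical correspondence between the representations of $G$, of $L^1(G)$ and of $C^*(G)$. First I would record that every positive functional on $L^1(G)$ is representable and that every positive functional on the $C^*$-algebra $C^*(G)$ is representable, so that Theorem~\ref{mainnew} applied to the $^*$-algebra $L^1(G)$ shows that (ii) is equivalent to the property that every topologically irreducible representation of $L^1(G)$ is finite dimensional, while Theorem~\ref{mainC} applied to the $C^*$-algebra $C^*(G)$ shows that (iii) is equivalent to the property that every irreducible representation of $C^*(G)$ is finite dimensional.

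Next I would invoke the natural bijective correspondence (Theorem 12.4.1 in \cite{Palmer2}) between the continuous unitary representations of $G$, the non-degenerate representations of $L^1(G)$, and the non-degenerate representations of $C^*(G)$, implemented by unitarily equivalent actions on the same Hilbert space. I would first note that a topologically irreducible representation of $L^1(G)$ (respectively, an irreducible representation of $C^*(G)$) is automatically non-degenerate, since the closure of the span of $\pi\langle A\rangle\mathcal{H}$ is a closed invariant subspace which cannot be $\{0\}$ without forcing $\pi=0$, excluded by our convention (Remark~\ref{trivial}); thus the correspondence restricts to one between topologically irreducible continuous unitary representations of $G$, topologically irreducible representations of $L^1(G)$, and -- using that topological and algebraic irreducibility coincide over the $C^*$-algebra $C^*(G)$ -- irreducible representations of $C^*(G)$. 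Since this correspondence preserves the underlying Hilbert space up to unitary equivalence, it preserves dimension; hence the three ``finite dimensionality of irreducible representations'' conditions, on $G$, on $L^1(G)$ and on $C^*(G)$, are mutually equivalent. As the condition for $G$ is exactly the definition of a Moore group, combining this with the previous paragraph yields (i)$\Leftrightarrow$(ii)$\Leftrightarrow$(iii).

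The only step that is not purely formal is the verification that this representation-theoretic correspondence respects topological irreducibility and the dimension of the representation space; but this is classical, being contained in Theorem 12.4.1 of \cite{Palmer2} together with the standard properties of the enveloping $C^*$-algebra $C^*(G)$ of $L^1(G)$ recalled above, so I do not expect any real obstacle. Everything else is a direct application of Theorems~\ref{mainnew} and~\ref{mainC}.
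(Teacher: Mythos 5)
Your proposal is correct and follows essentially the same route as the paper: both reduce (ii) and (iii) to the finite dimensionality of topologically irreducible representations via Theorems \ref{mainnew} and \ref{mainC}, and then transport that condition to $G$ through the classical correspondence of Theorem 12.4.1 in \cite{Palmer2}. Your added remarks (automatic non-degeneracy of topologically irreducible representations, preservation of dimension under the correspondence) only make explicit what the paper leaves implicit.
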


\subsection{Reduced $^*$-algebras which are not $G^*$-algebras}\label{rednonGcsillag}
In this subsection, we refer the reader mainly to \cite{F} and \cite{Palmer2}. 
Let us begin with examples.

\begin{example}\label{produktum}
For every positive integer $n$, denote by $(M_n(\mathbb{C}), \Vert\cdot\Vert_n)$ the $C^*$-algebra of $n$ by $n$ matrices.
Let $A_M$ be the complete direct product $^*$-algebra of the $M_n(\mathbb{C})$'s, endowed with the coordinatewise operations, i.e.,
\[
A_M:=\prod_{n\in\mathbb{N}^+}M_n(\mathbb{C})=\left\{(a_n)_{n\in\mathbb{N}^+}\left|a_n\in M_n(\mathbb{C})\right.\right\}.
\]
It is not hard to obtain that a non-zero $C^*$-seminorm on $A_M$ has the form 
\begin{equation}\label{maxnorm}
\sigma_F:A_M\to\mathbb{R}_+;\ \sigma_F((a_n)_{n\in\mathbb{N}^+})=\sup_{n\in F}\Vert a_n\Vert_n,
\end{equation}
where $F$ is a non-void finite subset of $\mathbb{N}^+$.
By Theorem 9.5.4 in \cite{Palmer2}, for every $C^*$-seminorm $\sigma$ on $A_M$ there is a $^*$-representation $\pi$ of $A_M$ such that $\sigma(a)=\Vert\pi(a)\Vert$ for all $a\in A_M$. 
Now, by the very similar calculations as in 9.7.26 and 9.7.27 of \cite{Palmer2}, we conclude that there is a finite subset $F$ of $\mathbb{N}^+$ such that
\[
\ker(\pi)\supseteq\left\{(a_n)_{n\in\mathbb{N}^+}\in A\left|\forall\ n\in F:\ a_n=0 \right.\right\}.
\]
Indeed, if $\mathbf{1}_n$ denotes the identity matrix in $M_n(\mathbb{C})$, then for every $m\in\mathbb{N}^+$ consider the element $e_m=(e_{mn})_{n\in\mathbb{N}^+}\in A$ such that
\[
 e_{mn}= \left\{\begin{array}{ll}
        0, & \text{if } 1\leq n\leq m;\\
        \mathbf{1}_n, & \text{if } m<n. 
        \end{array}\right.
\] 
Let $a=(n\mathbf{1}_n)_{n\in\mathbb{N}^+}$.
In the natural ordering of self-adjoint operators, the elements above satisfy $\pi((m+1)e_m)\leq\pi(a)$ $(m\in\mathbb{N}^+)$, thus $(m+1)\Vert\pi(e_m)\Vert\leq\Vert\pi(a)\Vert$. 
But $\pi(e_m)$ is a projection, so its norm must be $0$ or $1$.
From this we get that $\pi(e_m)=0$ for all but finitely many $m\in\mathbb{N}^+$.
Now \eqref{maxnorm} follows.
In fact, we obtained that the range of every $^*$-representation $\pi$ is finite dimensional, and, in particular, every topologically irreducible representation of $A_M$ is finite dimensional. (Such a representation can be obtained by the "projection" onto the $n$th coordinate.) It follows that the Lebesgue decomposition over $A_M$ is unique (Theorem \ref{mainnew}).

Hence, there is no pre-$C^*$-norm on $A_M$. 
By \eqref{redid}, $A_M$ is a reduced (the intersection of the kernels of all $C^*$-seminorms is $\{0\}$).
Furthermore, $A_M$ is not a $G^*$-algebra (the element $a$ above has $\gamma_{A_M}(a)=+\infty$).
\end{example}

\begin{example}\label{polialg}
Let $A$ be the $^*$-algebra of all functions $a:\mathbb{R}\to\mathbb{C}$ that are polynomials, endowed with pointwise operations and complex conjugation as the involution.
Then every non-zero $C^*$-seminorm on $A$ is of the form
\[
\sigma_K:A\to\mathbb{R}_+;\ \sigma_K(a)=\sup_{t\in K}|a(t)|,
\]
where $K$ is an arbitrary non-void compact subset of $\mathbb{R}$.
For an infinite $K$, $\sigma_K$ is a pre-$C^*$-norm, hence $A$ is reduced.
Moreover, for a non-constant $a$ we have that
\[
\gamma_A(a)=\sup\{\sigma_K(a)|\textrm{$K\subseteq\mathbb{R}$, $K$ is non-void and compact}\}=+\infty. 
\]
Thus, $A$ is not a $G^*$-algebra (but admits many pre-$C^*$-norms).
Note here that the Lebesgue decomposition over $A$ is unique, since every topologically irreducible representation of $A$ is one dimensional (\cite{Palmer2}, Theorem 9.6.10). 
\end{example}

Now let $A$ be a reduced $^*$-algebra.
As we mentioned after \eqref{greatest}, if $A$ is a $G^*$-algebra, then a positive functional $f$ on $A$ is representable iff $f$ is continuous with respect to the Gelfand-Naimark norm $\gamma_A$.  
However, if $A$ is not a $G^*$-algebra, a concrete locally convex Hausdorff topology $\tau_R$ can be defined on $A$ such that the representable positive functionals on $A$ are exactly the $\tau_R$-continuous positive functionals (Theorem \ref{csillagR}).
We exhibit this below.
Moreover, it turns out that the Lebesgue decomposition theory of representable positive functionals over $A$ is actually the same as the theory over a distinguished topological $^*$-algebra $E(A)$ (Definition \ref{fedolok}, discussion after Proposition \ref{bhatt}).

For the general theory of topological algebras and $^*$-algebras we refer the reader to A. Mallios' work \cite{Ma} and  M. Fragoulopoulou's book \cite{F}.
Let $A$ be an algebra. 
A \emph{locally $m$-convex topology} $\tau=\tau_{\Gamma}$ on $A$ is a topology which is defined by a directed/saturated (\cite{F}, page 7) family 
$\Gamma=\{\sigma_i|i\in \mathfrak{I}\}$
of submultiplicative seminorms on $A$. 
The algebra $A$, equipped with $\tau$, is a \emph{locally $m$-convex algebra}, which is denoted by $A[\tau]$.
It  is Hausdorff iff $\Gamma$ separates the points of $A$.
Moreover, $A[\tau]$ has a (jointly) continuous multiplication.
We call metrizable complete locally $m$-convex algebras \emph{Fréchet locally $m$-convex algebras}. 
If $A[\tau_{\Gamma}]$ is a locally $m$-convex algebra which is also a $^*$-algebra, and in addition every $\sigma\in\Gamma$ satisfies $\sigma(a^*)=\sigma(a)$ $(a\in A)$, then we use the term \emph{locally $m$-convex $^*$-algebra}. 

The following is Definition 7.5 on page 102 in \cite{F} (see also: \cite{Apostol}, \cite{Inoue}, \cite{Phil}). 

\begin{definition}
A Hausdorff locally $m$-convex $^*$-algebra $A[\tau]$, whose to\-po\-logy $\tau=\tau_{\Gamma}$ is determined by a family $\Gamma=\{\sigma_i|i\in \mathfrak{I}\}$  of $C^*$-seminorms, is called \emph{$C^*$-convex algebra}. 
If $A[\tau]$ is complete, then it is called \emph{locally $C^*$-algebra}.
\end{definition}

\begin{remark}
By definition, a $C^*$-convex algebra $A[\tau_{\Gamma}]$ is reduced,
and it has continuous multiplication and involution.
Hence, its (Hausdorff) completion 
$
\widetilde{A}[\tau_{\widetilde{\Gamma}}]
$ (\cite{F}, page 8) 
is a locally $C^*$-algebra.
The topology of $\widetilde{A}$ is determined by the family $\widetilde{\Gamma}$ consisting the unique continuous extensions of the elements of $\Gamma$ to $\widetilde{A}$. 
\end{remark}

According to \cite{F} (page 99, Introduction to Chapter II), the term "locally $C^*$-algebra" is due to A. Inoue (\cite{Inoue}).
Other names, such as "$b^*$-algebras" (e.g., \cite{Apostol}) or "pro-$C^*$-algebras" (e.g., \cite{Phil}) have been appeared for this kind of (generally non-normed) $^*$-algebras. 
There is an extensive literature dealing with their theory, in particular, the Chapters II-IV in \cite{F} (see also the above mentioned works and the Introduction to Chapter II in \cite{F}).
These references also contain many applications of the class of locally $C^*$-algebras.

Before providing some examples, our purposes need a concrete locally $m$-convex topology, which can be naturally defined on an arbitrary *-algebra $A$, and which makes each representable positive functional on $A$ to be continuous. 
So, we can involve this topology in the investigations of the Lebesgue decomposition theory. 
We treat only the reduced case.

Following Palmer \cite{Palmer2}, we introduce the concept of $^*$-representation topology (Definition 9.7.5, Proposition 9.7.6, Theorems 9.7.7. and 10.1.3).

\begin{theorem}\label{csillagR}
Let $A$ be a reduced $^*$-algebra and let $\Gamma_A$ be the family of all $C^*$-seminorms on $A$.
If $\tau_R$ denotes the topology defined by $\Gamma_A$, then $\tau_R$ is called the \textbf{$^*$-representation topology} on $A$, which has the following properties.
\begin{itemize}
\item[(1)]If $A$ is a $G^*$-algebra, then $\tau_R$ equals the norm topology induced by $\gamma_A$.

\item[(2)]$\tau_R$ is the strongest among all topologies $\tau$ on $A$ such that $A[\tau]$ is a $C^*$-convex algebra.

\item[(3)]$\tau_R$ is the weakest topology on $A$ which makes each $^*$-representation $\pi:A\to\mathscr{B}(\mathcal{H})$ continuous with respect to the norm topology on $\mathscr{B}(\mathcal{H})$.  

\item[(4)]A positive functional $f$ on $A$ is representable if and only if $f$ is $\tau_R$-continuous. 
\end{itemize}
\end{theorem}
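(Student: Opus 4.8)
The plan is to establish the four assertions in turn, all of them resting on one preliminary observation: $\Gamma_A$ is a directed, saturated family of submultiplicative $^*$-seminorms which separates the points of $A$. First I would record this. If $\sigma_1,\sigma_2$ are $C^*$-seminorms on $A$, then $\max\{\sigma_1,\sigma_2\}$ is again a $C^*$-seminorm, because it is visibly a seminorm and $\max\{\sigma_1(a^*a),\sigma_2(a^*a)\}=\max\{\sigma_1(a)^2,\sigma_2(a)^2\}=(\max\{\sigma_1(a),\sigma_2(a)\})^2$; hence $\Gamma_A$ is closed under finite maxima and so directed. By Sebestyén's theorem (\cite{Se}; \cite{Palmer2}, Theorem 9.5.14) every $\sigma\in\Gamma_A$ is submultiplicative and satisfies $\sigma(a^*)=\sigma(a)$. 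Finally, since $A$ is reduced, \eqref{redid} gives $\bigcap_{\sigma\in\Gamma_A}\ker(\sigma)=A_R=\{0\}$, so $\Gamma_A$ separates points. Consequently $\tau_R$ is a well-defined Hausdorff locally $m$-convex topology on $A$ and $A[\tau_R]$ is a $C^*$-convex algebra; this already handles the initial claim in the statement and is used repeatedly below.

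For (1), I would invoke \eqref{greatest}: when $A$ is a $G^*$-algebra, $\gamma_A$ is finite-valued and $\gamma_A(a)=\sup\{\sigma(a)\mid\sigma\in\Gamma_A\}$, so $\gamma_A$ is itself a member of $\Gamma_A$ which dominates every $\sigma\in\Gamma_A$. Hence a basic $\tau_R$-neighbourhood of $0$ contains the ball $\{a\mid\gamma_A(a)<\varepsilon\}$, and conversely that ball is a $\tau_R$-neighbourhood, so $\tau_R$ coincides with the topology of the single seminorm $\gamma_A$; reducedness makes $\gamma_A$ a norm. For (2), any topology $\tau$ with $A[\tau]$ a $C^*$-convex algebra is, by definition, determined by some family $\Gamma$ of $C^*$-seminorms, hence by a subfamily of $\Gamma_A$, so $\tau\subseteq\tau_{\Gamma_A}=\tau_R$; and $\tau_R$ itself is of this type by the first paragraph, so $\tau_R$ is the strongest such topology.

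For (3), the key point is that for any $^*$-representation $\pi:A\to\mathscr{B}(\mathcal{H})$ the map $a\mapsto\Vert\pi(a)\Vert$ is a $C^*$-seminorm on $A$ (from the $C^*$-identity in $\mathscr{B}(\mathcal{H})$), hence lies in $\Gamma_A$; since $\Vert\pi(a)-\pi(b)\Vert=\Vert\pi(a-b)\Vert$, this makes $\pi$ continuous from $(A,\tau_R)$ to the norm topology of $\mathscr{B}(\mathcal{H})$. Conversely, every $C^*$-seminorm $\sigma$ on $A$ has the form $\sigma(\cdot)=\Vert\pi_\sigma(\cdot)\Vert$ for a suitable $^*$-representation $\pi_\sigma$ (\cite{Palmer2}, Theorem 9.5.4, i.e.\ via the GNS construction applied to $\sigma$), so if a topology $\tau$ renders every $^*$-representation norm-continuous, then each $\sigma\in\Gamma_A$ is $\tau$-continuous and therefore $\tau_R\subseteq\tau$. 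This yields the minimality asserted in (3). Finally, (4) is obtained by matching two cited facts: by \cite{Palmer2}, Theorem 9.5.17, a positive functional $f$ on $A$ is representable precisely when $|f(a)|\leq K\sigma(a)$ $(a\in A)$ for some $\sigma\in\Gamma_A$ and $K\geq0$; and, since $\tau_R$ is defined by the directed family $\Gamma_A$, a linear functional on $A$ is $\tau_R$-continuous exactly when it admits such an estimate. The two conditions are literally identical, so representability and $\tau_R$-continuity coincide for positive functionals.

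I expect no deep obstacle here: the substantive inputs (Proposition 10.1.2 of \cite{Palmer2} for \eqref{greatest}, and Theorems 9.5.4, 9.5.14, 9.5.17 of \cite{Palmer2}) are all quoted, and the remaining work is bookkeeping. The two places that need care are certifying that $\tau_R$ is genuinely a $C^*$-convex algebra topology — the stability of $\Gamma_A$ under finite maxima and the appeal to Sebestyén's theorem for submultiplicativity — and, in (4), correctly invoking the standard description of continuous linear functionals for a topology defined by a directed family of seminorms so that it lines up exactly with the representability criterion.
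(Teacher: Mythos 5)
Your proposal is correct. The paper does not actually prove Theorem \ref{csillagR}: it is quoted wholesale from Palmer's book (\cite{Palmer2}, Definition 9.7.5, Proposition 9.7.6, Theorems 9.7.7 and 10.1.3), and your argument is the standard derivation of exactly those facts from the ingredients the paper already invokes elsewhere --- closure of the $C^*$-seminorms under finite maxima (cf.\ Remark \ref{represent}), Sebestyén's theorem, the realization of every $C^*$-seminorm as $\Vert\pi(\cdot)\Vert$ (\cite{Palmer2}, Theorem 9.5.4), the identity \eqref{greatest}, and the representability criterion of \cite{Palmer2}, Theorem 9.5.17 --- so it is a faithful reconstruction rather than a different route; the only point worth tightening is that in (3) the inclusion $\tau_R\subseteq\tau$ should be extracted from the $\tau$-openness of the preimages $\{a\mid\Vert\pi(a)-\pi(a_0)\Vert<\varepsilon\}$ for all base points $a_0$, so that no translation-invariance of $\tau$ needs to be presupposed.
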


The following statement shows that there is an important class of locally $C^*$-algebras such that the given topology is actually the $^*$-representation topology.

\begin{proposition}\label{frese}
If $A[\tau]$ is a Fréchet locally $m$-convex $^*$-algebra, then every $C^*$-seminorm on $A$ is $\tau$-continuous.
In particular, for Fréchet locally $C^*$-algebras $A[\tau]$ the equation $\tau=\tau_R$ holds.
\end{proposition}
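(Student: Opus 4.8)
The plan is to realise every $C^*$-seminorm through a $^*$-homomorphism into a $C^*$-algebra and to invoke automatic continuity, the ``in particular'' part then coming essentially for free.

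First I would settle the second assertion assuming the first. If $A[\tau]$ is a Fréchet locally $C^*$-algebra, then $\tau=\tau_{\Gamma}$ for a countable directed family $\Gamma$ of $C^*$-seminorms; in particular $A$ is reduced (the members of $\Gamma$ are $C^*$-seminorms and separate the points of $A$), and each member of $\Gamma$ is $\tau_R$-continuous by the very definition of $\tau_R$, so $\tau\subseteq\tau_R$. The first assertion gives the reverse inclusion, since every $C^*$-seminorm — in particular every one used to define $\tau_R$ — is $\tau$-continuous; hence $\tau=\tau_R$.

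For the first assertion, fix a $C^*$-seminorm $\sigma$ on $A$ (the case $\sigma=0$ being trivial), put $I:=\ker(\sigma)$, and note that $A/I$ equipped with $\sigma$ is a pre-$C^*$-algebra (by Sebestyén's theorem $\sigma$ is submultiplicative and $\sigma(a^*)=\sigma(a)$). Let $B_{\sigma}$ be its $C^*$-completion and $q\colon A\to B_{\sigma}$ the canonical $^*$-homomorphism; then $q$ has dense range and $\sigma=\Vert\cdot\Vert_{B_{\sigma}}\circ q$, so it suffices to prove that $q$ is $\tau$-continuous. As $A[\tau]$ is a Fréchet space and $B_{\sigma}$ is a Banach space, the closed graph theorem reduces this to showing that the separating space
\[
\mathfrak{S}(q):=\{\,b\in B_{\sigma}\mid \exists\,(a_n)_{n\in\mathbb{N}}\subseteq A,\ a_n\to 0\ \text{in}\ \tau,\ q(a_n)\to b\ \text{in}\ B_{\sigma}\,\}
\]
equals $\{0\}$. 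One checks in the usual way that $\mathfrak{S}(q)$ is a norm-closed linear subspace of $B_{\sigma}$; it is self-adjoint, because the involution of $A$ is $\tau$-continuous (the defining seminorms are $^*$-invariant) and that of $B_{\sigma}$ is isometric; and it is a two-sided ideal: if $a_n\to0$, $q(a_n)\to b$ and $c\in A$, then $ca_n\to0$ and $a_nc\to0$ by the joint continuity of multiplication in the locally $m$-convex algebra $A$, hence $q(c)b,\,bq(c)\in\mathfrak{S}(q)$, and since $q(A)$ is dense in $B_{\sigma}$ and $\mathfrak{S}(q)$ is closed, $B_{\sigma}\mathfrak{S}(q)+\mathfrak{S}(q)B_{\sigma}\subseteq\mathfrak{S}(q)$.

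The crux is $\mathfrak{S}(q)=\{0\}$, and this is where I would use the automatic-continuity machinery: the separating ideal of a dense-range homomorphism from a Fréchet (locally $m$-convex) $^*$-algebra into a $C^*$-algebra is contained in the Jacobson radical of the target, which is obtained from the Sinclair stability lemma for separating spaces of sequences of continuous operators — applied here to the left and right multiplications on $B_{\sigma}$ — exactly as in the $^*$-algebra form of Johnson's uniqueness-of-norm theorem; see \cite{F} and the standard references cited there. Since every $C^*$-algebra is semisimple, $\mathrm{rad}(B_{\sigma})=\{0\}$, so $\mathfrak{S}(q)=\{0\}$, $q$ is $\tau$-continuous, and therefore so is $\sigma$. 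I expect this last step to be the only genuinely nontrivial point; everything preceding it is routine bookkeeping. (Alternatively one might hope to prove directly that $\sigma$ is $\tau$-lower semicontinuous and then invoke barrelledness of Fréchet spaces — a lower semicontinuous, everywhere-finite seminorm has a closed, absorbing, absolutely convex unit ball, hence is continuous on a barrelled space — but establishing the lower semicontinuity of a $C^*$-seminorm appears to require essentially the same input, so the separating-space route is the cleaner one.)
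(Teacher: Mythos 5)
Your reduction is sound and is essentially the paper's: the ``in particular'' part follows exactly as you say ($\tau\subseteq\tau_R$ because the seminorms defining $\tau$ are $C^*$-seminorms, $\tau_R\subseteq\tau$ from the first assertion), and realising $\sigma$ through the dense-range $^*$-homomorphism $q\colon A\to B_\sigma$ is the same move as the paper's appeal to Theorem 9.5.4 in \cite{Palmer2}, which writes $\sigma(a)=\Vert\pi(a)\Vert$ for a $^*$-representation $\pi$. The bookkeeping about $\mathfrak{S}(q)$ being a closed, self-adjoint, two-sided ideal is also correct.

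The gap is in the crux, and it is a real one: the assertion that the separating space of a \emph{dense-range} homomorphism lands in the Jacobson radical of the target ``exactly as in Johnson's uniqueness-of-norm theorem'' is not available. Johnson's theorem and the radical-containment lemma behind it require the homomorphism to be \emph{surjective}: one pulls back an algebraically irreducible representation of the target along $\theta$ and needs the pullback to remain algebraically irreducible, which fails for maps that are merely dense-range (you only retain topological irreducibility, which is useless for that argument). The Sinclair stability lemma applied to left and right multiplications does not repair this, and the dense-range analogue of Johnson's theorem for general (even Banach-algebra) domains and semisimple targets is not a theorem one can cite. What actually makes the statement true is the $^*$-structure and positivity, not semisimplicity of $B_\sigma$: for a unital Banach $^*$-algebra domain one has $\Vert q(a)\Vert^2=\Vert q(a^*a)\Vert=r(q(a^*a))\le r_A(a^*a)\le\Vert a^*a\Vert$, and in the Fréchet locally $m$-convex case, where spectra can be unbounded and this estimate breaks down, the continuity is recovered via the square-root/functional-calculus argument underlying the automatic continuity of positive functionals and of $^*$-representations of Fréchet locally $m$-convex $^*$-algebras. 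That is precisely the content of Corollary 17.2 in \cite{F}, which the paper cites and which you should invoke (or reprove) at this point; your alternative lower-semicontinuity-plus-barrelledness route runs into the same missing input, as you yourself suspect.
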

\begin{proof}
Let $\sigma$ be a $C^*$-seminorm on $A$. 
By Theorem 9.5.4 in \cite{Palmer2}, there exists a $^*$-representation $\pi$ of $A$ such that $\sigma(a)=\Vert\pi(a)\Vert$ for all $a\in A$.
Now Corollary 17.2 in \cite{F} implies that $\pi$ is $\tau$-continuous, so $\sigma$ is a continuous $C^*$-seminorm on $A[\tau]$.
If, in addition, $A[\tau]$ is a locally $C^*$-algebra, property (2) of the previous Theorem \ref{csillagR} concludes that $\tau=\tau_R$.
\end{proof}

\begin{example}\label{prodCcsill}
 The $^*$-algebra $A_M:=\prod_{n\in\mathbb{N}^+}M_n(\mathbb{C})$ appeared in Example \ref{produktum}, endowed with the product topology $\tau$, is a locally $C^*$-algebra. 
In fact, $\tau$ is the $^*$-representation topology $\tau_R$.
To see this, let us consider a more general case. 
Let $(A_n,\Vert\cdot\Vert_n)_{n\in\mathbb{N}}$ be a sequence of $C^*$-algebras.
By Example 7.6 (2) in \cite{F}, the pro\-duct topology $\tau$ on $A:=\prod_{n\in\mathbb{N}}A_n$ is actually determined by the family of the $C^*$-seminorms $\Gamma=\{\sigma_F|F\subseteq\mathbb{N}, F\ $is non-void and finite$\}$, where
\begin{equation}\label{genalt}
\sigma_F:A\to\mathbb{R}_+;\ \sigma_F((a_n)_{n\in\mathbb{N}})=\sup_{n\in F}\Vert a_n\Vert_n,
\end{equation}
moreover $A[\tau]$ is a locally $C^*$-algebra.
However, the $C^*$-seminorms  
\[
\sigma_m:A\to\mathbb{R}_+;\ \sigma_m((a_n)_{n\in\mathbb{N}})=\sup_{n\leq m}\Vert a_n\Vert_n,\ (m\in\mathbb{N})
\]
also determine the topology $\tau$, hence $A[\tau]$ is a Fréchet locally $C^*$-algebra.
Now Proposition \ref{frese} implies that $\tau=\tau_R$. 
(This also concludes that the $C^*$-seminorms on $A$ have the form \eqref{genalt}.)

Consider again the locally $C^*$-algebra $A_M[\tau_R]$.
The so-called \emph{bounded part} of $A_M$ is the $^*$-subalgebra
\begin{equation*}
(A_M)_b:=\left\{(a_n)_{n\in\mathbb{N}^+}\left|a_n\in M_n(\mathbb{C}),\ \sup_{n\in\mathbb{N}}\Vert a_n\Vert_n<+\infty\right.\right\},
\end{equation*}
which is a $C^*$-algebra (cf. Question \ref{Q1} (1)) with the norm 
\[
\Vert(a_n)_{n\in\mathbb{N}^+}\Vert:=\sup_{n\in\mathbb{N}}\Vert a_n\Vert_n,
\] 
and it is $\tau_R$-dense in $A_M$ (\cite{Apostol}, Theorem 2.3; \cite{F}, Theorem 10.23).
In Example \ref{produktum}, we derived that the Lebesgue decomposition of representable positive functionals over $A_M$ is unique, because every topologically irreducible representation of $A_M$ is finite dimensional.
In contrast with this, the Lebesgue decomposition of positive functionals over the $\tau_R$-dense $C^*$-subalgebra $(A_M)_b$ is not unique, since (as we have seen in Question \ref{Q1} (1)) $(A_M)_b$ is an NGCR/antiliminal $C^*$-algebra.  
\end{example}

\begin{example}\label{hemikomp}
A Hausdorff topological space $X$ is said to be a \emph{$k$-space} if, whenever a subset $G\subseteq X$ intersects each compact $K\subseteq X$ in an open set (in the relative topology of $K$), then $G$ is open in $X$. For example, a Hausdorff, locally compact or first countable space is a $k$-space (e.g., \cite{Mi}, Appendix D).

Let $X$ be a Hausdorff completely regular $k$-space.
Denote by $A:=\mathscr{C}(X;\mathbb{C})$ the $^*$-algebra of all continuous complex valued functions on $X$, with the pointwise operations.
If $\mathfrak{K}$ stands for the set of all non-void compact subsets of $X$ then, for any $K\in\mathfrak{K}$  the mapping
\[
\sigma_K:A\to\mathbb{R}_+;\ \sigma_K(a)=\sup_{t\in K}|a(t)|,
\]
is a $C^*$-seminorm. 
According to Examples 3.10 (4) and 7.6 (3) in \cite{F}, the Hausdorff topology $\tau_{\mathfrak{K}}$ induced by the family $\Gamma=\{\sigma_K|K\in\mathfrak{K}\}$ is the \emph{topology of compact convergence} and $A[\tau_{\mathfrak{K}}]$ is a locally $C^*$-algebra.
Denote this topological $^*$-algebra by $\mathscr{C}_c(X;\mathbb{C})$.

Note here that $\tau_{\mathfrak{K}}$ is weaker than the $^*$-representation topology $\tau_R$ in general.
Indeed, let $X$ be a Hausdorff countably compact (i.e., every infinite subset of $X$ has a limit point), non-compact, completely regular space which is either locally compact or first countable (a concrete example is in the Remark after Proposition 12.2 (b) in \cite{Mi}). Then, by Proposition 12.2 (b) in \cite{Mi}, the locally $C^*$-algebra $\mathscr{C}_c(X;\mathbb{C})$ admits a discontinuous multiplicative linear functional $f$. 
Since such a functional on a locally $C^*$-algebra is hermitian (that is, $f(a^*)=\overline{f(a)}$ for any $a\in\mathscr{C}_c(X;\mathbb{C})$) by Proposition 2.5 in \cite{Apostol}, $f$ is a positive functional on $A$.
It is trivially continuous with respect to the $C^*$-seminorm
\[
\mathscr{C}_c(X;\mathbb{C})\ni a\mapsto |f(a)|\in\mathbb{R}_+,
\]
hence, according to the discusson preceding Remark \ref{represent}, $f$ is representable.
Thus $\tau_{\mathfrak{K}}=\tau_R$ is not true, since every representable positive functional is $\tau_R$-continuous (Theorem \ref{csillagR} (4)).

On the other hand, if $\mathscr{C}_c(X;\mathbb{C})$ is Fréchet, then by Proposition \ref{frese} the equation $\tau_{\mathfrak{K}}=\tau_R$ holds. Remark (ii) on page 36 in \cite{F} shows that $\mathscr{C}_c(X;\mathbb{C})$ is Fréchet if and only if $X$ is \emph{hemicompact}, e.g., $X$ is locally compact and $\sigma$-compact. 
So, there are many interesting cases when the topology of compact convergence is the $^*$-representation topology.
\end{example}

Other interesting and important examples can be found in, e.g., \cite{Apostol}, \cite{Bhatt}, \cite{F}, \cite{Inoue}, \cite{Phil}.

We introduce another relevant concept, the so-called enveloping locally $C^*$-algebra (\cite{F}, Section 18 in Chapter IV; \cite{Bhatt}, Section 2).
It can be defined in a more general setting (in the case of not necessarily $m$-convex or reduced $^*$-algebras), but for simplicity we make extra assumptions.

\begin{definition}\label{fedolok}
Let $A[\tau]$ be a Hausdorff locally $m$-convex $^*$-algebra.
Suppose that the family $\Gamma_{C^*}$ of the $\tau$-continuous $C^*$-seminorms separate the points of $A$.
Then the completion $\widetilde{A}[\tau_{\widetilde{\Gamma_{C^*}}}]$ of the $C^*$-convex algebra $A[\tau_{\Gamma_{C^*}}]$ is the \emph{enveloping locally $C^*$-algebra} of $A[\tau]$, and it is denoted by $E(A)[\tau_E]$ (or simply $E(A)$, if there is no confusion).
\end{definition}

\begin{example}If $A[\tau]$ is a $C^*$-convex algebra, then $E(A)$ is just the completion of $A[\tau]$ (in particular, if $A[\tau]$ is complete, then $E(A)$ is $A[\tau]$ itself).
For instance, the $^*$-algebra of polynomials in Example \ref{polialg}, endowed with the topology of compact convergence $\tau_{\mathfrak{K}}$ (which equals with $^*$-representation topology) is a $C^*$-convex algebra.
Its completion/enveloping locally $C^*$-algebra is $\mathscr{C}_c(\mathbb{R};\mathbb{C})$. 

For a non-$C^*$-convex example, consider the algebra (with pointwise operations) $\mathcal{O}(\mathbb{C})$ of all analytic functions defined on the entire plane $\mathbb{C}$.
Similar to Example \ref{Acsillag} (1), define the involution by
\[
^*:\mathcal{O}(\mathbb{C})\to \mathcal{O}(\mathbb{C});\ a^*(z)=\overline{a(\overline{z})}\ (z\in\mathbb{C}).
\]  
Then the family $\Gamma$ of the submultiplicative norms
\[
\sigma_n:A\to\mathbb{R}_+;\ \sigma_n(a)=\sup_{|z|\leq n}|a(z)|\ (n\in\mathbb{N}^+)
\]
determines a Hausdorff locally $m$-convex topology on $\mathcal{O}(\mathbb{C})$, and $\mathcal{O}(\mathbb{C})[\tau_{\Gamma}]$ is a Fréchet locally $m$-convex $^*$-algebra (\cite{F}, Examples 2.4 (5) and 3.2 (4)), which is not a locally $C^*$-algebra.
The enveloping locally $C^*$-algebra of $\mathcal{O}(\mathbb{C})[\tau_{\Gamma}]$ is $\mathscr{C}_c(\mathbb{R};\mathbb{C})$  (\cite{F}, 18.9 (1)).

Note that for Fréchet locally $m$-convex $^*$-algebras with a separating system of continuous $C^*$-seminorms, the topology of the enveloping locally $C^*$-algebra is the $^*$-representation topology.
Indeed, $E(A)$ is metrizable (\cite{Bhatt}, the paragraph preceding to Lemma 2.5), thus Proposition \ref{frese} applies. 
\end{example}

\begin{remark}\label{conlc}
The concept of the enveloping locally $C^*$-algebra does not necessarily agree with the enveloping $C^*$-algebra (Definition \ref{fedoCalg}).
For instance, the disc $^*$-algebra $A(\mathbb{D})$ in Example \ref{Acsillag} (1) is a reduced Banach $^*$-algebra (hence a $G^*$-algebra) and its enveloping $C^*$-algebra $C^*(A(\mathbb{D}))$ is $\mathscr{C}([-1,1];\mathbb{C})$ equipped with the supremum norm. 
On the other hand, if we endow $A(\mathbb{D})$ with the topology $\tau$ induced by the pre-$C^*$-norm
\[
\sigma_{[0,1]}:A(\mathbb{D})\to A(\mathbb{D});\ \sigma_{[0,1]}(a):=\sup_{t\in[0,1]}|a(t)|,
\]
then $A[\tau]$ is a $C^*$-convex algebra and $E(A)=\mathscr{C}([0,1];\mathbb{C})$ equipped with the topology induced by the supremum norm on $[0,1]$. 

The difference is that $E(A)$ depends on the initial topology of $A[\tau]$, while (when $A$ is a $G^*$-algebra) $C^*(A)$  is the completion with respect to the concrete pre-$C^*$-norm $\gamma_A$.
The latter $C^*$-algebra has the "same" $^*$-representation theory as that of $A$ (discussion after Definition \ref{fedoCalg}), while $E(A)$ has the "same" continuous $^*$-representation theory as that of $A[\tau]$ (e.g.: \cite{F}, page 207, Introduction to Chapter IV and Theorem 18.8). 
However, in the case of the $^*$-representation topology, $E(A)=C^*(A)$ holds for $G^*$-algebras (see Proposition \ref{bhatt} below).  
\end{remark}

We answer Question \ref{quest} after the following helpful statement.

\begin{proposition}\label{bhatt}
Let $A$ be a reduced $^*$-algebra and let $\tau_R$ be the $^*$-rep\-re\-sen\-ta\-tion topology on $A$.
The following statements are equivalent.
\begin{itemize}
\item[(i)]$A$ is a $G^*$-algebra.
\item[(ii)]The enveloping locally $C^*$-algebra $E(A)$ of the $C^*$-convex algebra $A[\tau_R]$ is topologically $^*$-isomorphic to a $C^*$-algebra.
\end{itemize}
If the conditions are fulfilled, then $E(A)=C^*(A)$. 
\end{proposition}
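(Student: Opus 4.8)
The plan is to establish (i)$\Leftrightarrow$(ii) together with the identity $E(A)=C^*(A)$ by working directly with the $^*$-representation topology. First I would record the reduction that makes everything transparent: since $\tau_R$ is, by definition, the topology generated by the family $\Gamma_A$ of \emph{all} $C^*$-seminorms on $A$, every $C^*$-seminorm on $A$ is $\tau_R$-continuous, so the family of $\tau_R$-continuous $C^*$-seminorms attached to the $C^*$-convex algebra $A[\tau_R]$ is exactly $\Gamma_A$; moreover $\Gamma_A$ separates points because $A$ is reduced (\eqref{redid}). Hence Definition \ref{fedolok} says precisely that $E(A)$ is the completion of $A[\tau_R]$, inside which $A$ sits as a dense $^*$-subalgebra (the inclusion being injective by Hausdorffness).

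For (i)$\Rightarrow$(ii) I would argue directly. If $A$ is a $G^*$-algebra then $\gamma_A$ is a $C^*$-seminorm, in fact a norm since $A$ is reduced, and by Theorem \ref{csillagR}~(1) the topology $\tau_R$ coincides with the $\gamma_A$-norm topology. Thus $A[\tau_R]=(A,\gamma_A)$ as a topological $^*$-algebra, and its completion is by Definition \ref{fedoCalg} the enveloping $C^*$-algebra $C^*(A)$. In particular $E(A)=C^*(A)$ is a $C^*$-algebra, which gives (ii).

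For the converse (ii)$\Rightarrow$(i), suppose $\Phi\colon E(A)\to B$ is a topological $^*$-isomorphism onto a $C^*$-algebra $(B,\Vert\cdot\Vert_B)$. Transporting the norm gives a $C^*$-norm $\Vert x\Vert:=\Vert\Phi(x)\Vert_B$ on $E(A)$ that induces its topology, and its restriction to $A$, namely $\sigma_0(a):=\Vert a\Vert$ for $a\in A$, is a finite valued $C^*$-seminorm on $A$. The crux is to show $\gamma_A\leq\sigma_0$ pointwise, which immediately makes $\gamma_A$ finite valued, i.e., $A$ a $G^*$-algebra. To do this I would take an arbitrary $^*$-representation $\pi\colon A\to\mathscr{B}(\mathcal{H})$; by Theorem \ref{csillagR}~(3) it is $\tau_R$-continuous, hence, being a continuous linear map on the $\tau_R$-dense subspace $A$ of $E(A)$ with values in the complete space $\mathscr{B}(\mathcal{H})$, it extends uniquely to a continuous linear map $\widetilde{\pi}\colon E(A)\to\mathscr{B}(\mathcal{H})$, which is again a $^*$-homomorphism because the operations of $E(A)$ are continuous. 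Then $\widetilde{\pi}\circ\Phi^{-1}\colon B\to\mathscr{B}(\mathcal{H})$ is a $^*$-homomorphism between $C^*$-algebras, hence contractive, so $\Vert\pi(a)\Vert=\Vert\widetilde{\pi}(a)\Vert\leq\Vert\Phi(a)\Vert_B=\sigma_0(a)$ for all $a\in A$; taking the supremum over all $^*$-representations yields $\gamma_A(a)\leq\sigma_0(a)<+\infty$.

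Finally, once either condition holds, I would combine this inequality with $\sigma_0\leq\gamma_A$ (valid by \eqref{greatest}, since $\gamma_A$ dominates every $C^*$-seminorm on $A$) to get $\sigma_0=\gamma_A$; therefore $E(A)$, being the completion of $(A,\sigma_0)=(A,\gamma_A)$, equals $C^*(A)$, which also re-proves the last assertion in the setting of (i)$\Rightarrow$(ii). I do not expect a genuine obstacle: the only point needing care is the extension of $\pi$ to $E(A)$ and the persistence of the $^*$-homomorphism property, but both are standard density-and-continuity arguments; the real content is simply matching the definitions of $\tau_R$, $E(A)$ and $C^*(A)$ with the universal-type statements packaged in Theorem \ref{csillagR}.
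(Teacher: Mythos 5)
Your proposal is correct and follows essentially the same route as the paper: the forward direction via Theorem \ref{csillagR}~(1), and the converse by extending a $\tau_R$-continuous $^*$-representation to $E(A)$ and invoking contractivity of $^*$-homomorphisms of $C^*$-algebras to bound $\gamma_A$ by the transported $C^*$-norm. The only cosmetic difference is that you bound $\Vert\pi(a)\Vert$ directly over all representations, while the paper bounds an arbitrary $C^*$-seminorm $\sigma$ after realizing it as $\Vert\pi(\cdot)\Vert$ via Palmer's Theorem 9.5.4 — these are interchangeable by the definition of $\gamma_A$.
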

\begin{proof}
(i)$\Rightarrow$(ii):
If $A$ is a $G^*$-algebra, then by Theorem \ref{csillagR} (1), $\tau_R$ is norm topology induced by $\gamma_A$.
Thus (ii) and $E(A)=C^*(A)$ are immediately follow.

(ii)$\Rightarrow$(i):
Assume that the topology of $E(A)$ is determined by the $C^*$-norm $\sigma_{E(A)}$ and let $\sigma$ be a $C^*$-seminorm on $A$. 
We prove for any $a\in A$ that 
\[
\sigma(a)\leq\sigma_{E(A)}(a)
\]
 holds true and hence, consequently $\sigma_{E(A)}|_A=\gamma_A$ and $A$ is a $G^*$-algebra.

By Theorem 9.5.4 in \cite{Palmer2}, there exists a $^*$-representation $\pi$ of $A$ such that $\sigma(a)=\Vert\pi(a)\Vert$ for all $a\in A$.
Since $\pi$ is $\tau_R$-continuous (Theorem \ref{csillagR} (3)), it extends to a (continuous) $^*$-representation $\widetilde{\pi}$ of the enveloping locally $C^*$-algebra $E(A)$.
But it is well known that each $^*$-representation of a $C^*$-algebra is a contraction (\cite{Palmer2}, Theorem 9.5.12 (e)), that is, 
\[
\Vert\widetilde{\pi}(a)\Vert\leq\sigma_{E(A)}(a)
\]
for every $a\in E(A)$. 
Now the conclusion follows.
\end{proof}

Our last comment answers Question \ref{quest} and explains that the most general setting for the Lebesgue decomposition theory of all representable positive functionals over reduced $^*$-algebras is in fact the case of locally $C^*$-algebras with the $^*$-representation topology.  

Consider a reduced $^*$-algebra $A$ and equip it with the $^*$-representation to\-po\-lo\-gy $\tau_R$.
Let $E(A)$ be the completion/enveloping locally $C^*$-algebra of $A[\tau_R]$.
Then the topology $\tau_E$ of $E(A)$ is its $^*$-representation topology.
Since the representable positive functionals on $A$ are precisely the $\tau_R$-continuous positive functionals (Theorem \ref{csillagR} (4)), their unique $\tau_E$-continuous linear extensions to $E(A)$ are positive and representable.
Moreover, if $f$ and $g$ are representable positive functionals on $A$ such that $f\leq g$, then their extensions $\widetilde{f}$ and $\widetilde{g}$ satisfy $\widetilde{f}\leq\widetilde{g}$.
By the definitions of absolute continuity and singularity (Definitions \ref{absdef} and \ref{singdef}) and the Lebesgue decomposition (Theorem \ref{Lebesgue}), it is easy to see that the following are true:
\begin{itemize}
\item $f\ll g \Leftrightarrow \widetilde{f}\ll\widetilde{g}$ and $f\perp g \Leftrightarrow \widetilde{f}\perp\widetilde{g}$;
\item If $f=f_r+f_s$ is the Lebesgue decomposition with respect to $g$ and $\widetilde{f}=(\widetilde{f})_r+(\widetilde{f})_s$ is the Lebesgue decomposition with respect to $\widetilde{g}$, then $(\widetilde{f})_r=\widetilde{f_r}$ and $(\widetilde{f})_s=\widetilde{f_s}$.
Furthermore, since the $^*$-representation theory of $A$ is in a bijective correspondence with the $^*$-representation theory of $E(A)$, the uniqueness holds for the decomposition over $A$ if and only if uniqueness holds over $E(A)$ (Theorem \ref{mainnew}).
\end{itemize}
By Proposition \ref{bhatt}, $E(A)$ is a $C^*$-algebra iff $A$ is a $G^*$-algebra.
Hence the discussion above completely answers Question \ref{quest} as well.

\begin{acknowledgements} The authors thank the anonymous referee the valuable suggestions that improved the presentation of the paper.
\end{acknowledgements}







\end{document}